\newcommand{\mc}{\mathcal}
\newcommand{\mb}{\mathbf}
\newcommand{\R}{\mathbb{R}}
\newcommand{\E}{\mathbb{E}}
\newcommand{\p}{\mathbb{P}}
\newcommand{\bx}{\boldsymbol{\xi}}
\newcommand{\bz}{\boldsymbol{\zeta}}
\newcommand{\internal}{\mathsf{int}}
\newcommand{\gaps}{\mathsf{GapS}}
\newcommand{\gapu}{\mathsf{GapU}}
\newcommand{\lips}{\mathsf{Lip}}
\newcommand{\memo}{\mathsf{Memory}}
\newcommand{\rr}[1]{{\color{black}\mathsf{read}(#1)}}
\newcommand{\ww}[1]{{\color{black}\mathsf{write}(#1)}}
\newcommand{\del}[1]{{\color{black}\mathsf{del}(#1)}}
\DeclareMathOperator*{\argmin}{argmin}
\newcommand{\msup}{$\mathsf{MS}$-$\mathsf{U}$}
\newcommand{\mssp}{$\mathsf{MS}$-$\mathsf{S}$}
\newtheorem{theorem}{Theorem}[section]
\newtheorem{lemma}{Lemma}[section]
\newtheorem{definition}{Definition}[section]
\newtheorem{corollary}{Corollary}[section]
\newtheorem{assumption}{Assumption}[section]
\title{Efficient Online Mirror Descent Stochastic Approximation for Multi-Stage Stochastic Programming}
\author[1]{Junhui Zhang}
\author[1,2]{Patrick Jaillet}
\affil[1]{Operations Research Center, MIT}
\affil[2]{Department of Electrical Engineering and Computer Science, MIT}
\date{}
\begin{document}
\maketitle

\begin{abstract}
    We study the unconstrained and the minimax saddle point variants of the convex multi-stage stochastic programming problem, where consecutive decisions are coupled through the objective functions, rather than through the constraints. We approach the problems from the infinite-dimensional policy perspective, but consider an online setting where only the policies corresponding to the actual realization of the underlying stochastic process is needed. This leads to a trackable formulation, where the dimension of the output is linear in the number of stages $T$. 

We propose hypothetical Mirror Descent Stochastic Approximation (MDSA) for the infinite dimensional policies using stochastic conditional gradients. By taking advantage of the decomposability of the updates across stages and realizations of the underlying stochastic process, we show that the proposed MDSA algorithms admit efficient online implementation, which achieves overall gradient complexity linear in $T$, improving exponentially over all existing algorithms.
\end{abstract}

\section{Introduction}

Sequential decision making has found applications in a variety of real-life problems: from classical ones such as power system management \cite{Bhattacharya2018, Lu2020,pereira_multi-stage_1991} and inventory control \cite{LanShapiro2024,Shapiro2021Lectures}, to more modern ones such as data center management \cite{ChenComdenLiuGandhiWierman2016,Lin2012,LinWiermanAndrewThereska2011} and online resource allocation \cite{BalseiroLuMirrokni2023,Vera2021Bayesian,Xie2024Benefits}. In this work, we study \textit{sequential} decision making in \textit{stochastic} environments in an \textit{online} framework. 

To model the sequential revelation of information and the underlying stochasticity, we adopt a Multi-Stage Stochastic Programming (MSSP) formulation \cite{Shapiro2021Lectures}. Informally, MSSP divides the decision-making process into stages ($t=1,\ldots,T$): at stage $t$, the decision maker observes the realization of a random variable $\bx_t=\xi_t$, then needs to decide $x_t$ based only on the information known by stage $t$ (that is, $\bx_{1:t}$, but not $\bx_{(t+1):T}$). The overall decision process is the following:
\begin{align*}
    \text{observation}(\xi_1) \rightsquigarrow \text{decision}(x_1) \rightsquigarrow \text{observation}(\xi_2) \rightsquigarrow \\
    \cdots \rightsquigarrow \text{observation}(\xi_T) \rightsquigarrow \text{decision}(x_T). 
\end{align*}

The goal is to minimize the costs subject to coupled constraints, both of which depend on $x_{1:T}$ and $\xi_{1:T}$. Due to its wide applications, MSSP has been studied in many prior works \cite{SDDP-review,pereira_multi-stage_1991,Shapiro2021Lectures}. However, existing algorithms usually suffer from restrictive assumptions such as stage-wise independent randomness \cite{SDDP-review,LanShapiro2024} and have exponential (in $T$) complexity \cite{lan_dynamic_2021,SHAPIRO20061complexity}.

We study the unconstrained and the minimax saddle point variants of the convex multi-stage stochastic programming problems, where consecutive decisions are coupled through the objective functions, rather than through the constraints. We approach these problems from a \textit{policy} perspective, where the decision variables are mappings from realizations of $\bx_{1:T}$ to actual decisions. The resulting formulations become infinite dimensional. However, instead of solving for policies for all realizations of $\bx_{1:T}$, we aim at solving only for policies corresponding to $\xi_{1:T}$, \textit{the relevant (actual) realization} of $\bx_{1:T}$, which is revealed sequentially. Thus, the problems remain tractable, as the output dimension is linear in $T$. 

In order to find a decision $x_t$ for the relevant $\xi_{1:t}$, we first propose \textit{hypothetical} mirror descent updates applied to policies for each realization of $\bx_{1:T}$, and propose using \textit{stochastic conditional gradients} as approximate gradients, which generalize the stochastic gradient oracles in the classical stochastic programming literature \cite{LanBook,NemirovskiRobustSA2009}. We provide one sampling approach to construct such stochastic conditional gradients, and prove the convergence of the resulting Mirror Descent Stochastic Approximation (MDSA) algorithms. Then, we take advantage of the decomposability of the mirror descent updates across stages and realizations $\xi_{1:T}$, and show that to find the relevant components of the policies, only $O(T)$ gradient steps are needed, thereby reducing the complexity from exponential to linear in $T$.

We demonstrate the effectiveness and robustness of our mirror descent stochastic approximations by applying them to a tracking problem and a revenue management problem.

\subsection{Existing algorithms and comparison}
In the classical setup of Stochastic Programming (SP), the objective function is $F(x) = \E[f(x,\bx(w))]$, where $w\in \Omega$ and $(\Omega,\mc F,\p)$ is the underlying probability space, and $\bx$ is a random vector. Multi-stage stochastic programming (with $T$ stages) is a generalization of SP, driven by an underlying stochastic process $\bx = (\bx_1,\ldots,\bx_T)$: at stage $t$, $\bx_t$ is revealed and the decision maker needs to decide $\mb x_t$ in a non-anticipative manner such that $\mb x_t$ cannot depend on $\bx_{(t+1):T}$. The goal is to minimize $\sum_{t=1}^T h_t(\mb x_t,\bx_t)$ subject to the constraints $ g_t(\mb x_{t-1},\mb x_t,\bx_t)\leq \mb 0$ for all $t$.

As a preview, our MDSA algorithms for the unconstrained and the saddle point variants of the multi-stage stochastic programming problems can find $\epsilon T$ suboptimal policies for all $T$ stages in an online fashion, with gradient oracle complexity $O(T\min(e^{C/\epsilon^2},(\frac{C}{\epsilon})^{2T})$ and space complexity $O(\epsilon^{-4})$ (Corollary \ref{cor:overall}) \footnote{Here, we assume that the parameters (Lipschitz constants of the objectives and domain sizes) for all stages are $O(1)$. In this regime, $C=O(1)$ as $T\to \infty$.}. In particular, if $\epsilon = \Omega(1)$, the oracle complexity is $O(T)$ and the space complexity is $O(1)$. Below, we compare our algorithms with existing approaches for multi-stage stochastic programming problems. 

\textbf{Progressive hedging.} The \textit{non-anticipativity constraint} on $\mb x_t$ can be formulated as the linear constraint that $\E[\mb x_t|\bx_{1:t}] = \mb x_t$. \cite{Rockafellar1976Nonanticipativity,RockafellarWets1991Scenarios,RockafellarWets2017,bareilles_randomized_2020,Eckstein2025} dualize this constraint and apply augmented Lagrangian methods. However, this requires the number of scenarios $|\Omega|<\infty$, and the complexities of these algorithms scale linearly with $|\Omega|$, which is often exponential in $T$. 

\textbf{Stochastic approximation.} Mirror descent stochastic approximation has been well studied for stochastic programming problems where the objective functions are of the (simple) form $F(x) = \E[f(x,\bx)]$ \cite{JuditskyNemirovskiTauvel2011,lan_optimal_2012,LanBook,NemirovskiRobustSA2009,Shapiro2021Lectures}. In the multi-stage setting, to apply stochastic approximation type of algorithms, one needs to have access to (potentially stochastic, biased) first order oracles for the cost-to-go functions \footnote{For stage $t$, the cost-to-go function is \begin{align*}
    V_t(x_t) := &\max_{\mb x_{(t+1):T}}\E[\sum_{s=t+1}^Th_s(\mb x_s,\bx_s)]\\
    &~s.t.~ g_{t+1}(x_t,\mb x_{t+1},\bx_{t+1})\leq \mb 0\\
    &~\quad \quad  g_{s}(\mb x_{s-1},\mb x_{s},\bx_{s})\leq \mb 0,\quad s = t+2,\ldots,T\\
    &~\quad\quad \E[\mb x_{s}|\bx_{1:s}] = \mb x_s,s=t+1,\ldots,T
\end{align*}}. 

\cite{lan_dynamic_2021} proposes the Dynamic Stochastic Approximation (DSA) algorithm, which solves backwardly for approximate subgradients of the cost-to-go functions at the query points, and then applies inexact primal-dual updates. To find an $\epsilon T$ suboptimal \textit{first stage} solution, DSA requires $O((C'/\epsilon)^{2T-1})$ proximal updates for general convex objectives and $O((C''/\epsilon)^{T-1/2})$ for strongly convex objectives\footnote{We point it out that Theorem 22 in \cite{lan_dynamic_2021} shows that to find $\epsilon$ suboptimal solution for first stage, the complexity is $O((T/\epsilon)^{2T-1})$, and here we rescale the suboptimality with $T$.}. The space complexity of DSA is $O(T)$. 

However, DSA assumes access to proximal oracles, which can be further approximated by $\Theta(\frac{1}{\epsilon})$ gradient descent steps, making the overall gradient oracle complexity when solving for all $T$ stages $O(\frac{T}{\epsilon}\cdot (C'/\epsilon)^{2T-1})$ (for general convex objectives). Thus, the gradient complexity of our MDSA is no worse than DSA in all regimes of $\epsilon$. Moreover, when $\epsilon = \Omega(T^{-\alpha})$, 
    \begin{itemize}
        \item our MDSA has oracle complexity $O(Te^{\widetilde{C}T^{2\alpha}})$, which is better than the $O(Te^{\widetilde{C'}T\log(T)})$ complexity of DSA in the regime $0\leq \alpha<1/2$;
        \item our MDSA has space complexity $O(T^{4\alpha})$, which is better than the $O(T)$ complexity of DSA in the regime $0\leq \alpha<1/4$. 
    \end{itemize}

\textbf{Cutting plane methods.} Another well known algorithm for MSSP is the stochastic dual dynamic programming (SDDP) \cite{python-package, ju2023dualdynamicprogrammingstochastic,lan_complexity_2022,pereira_multi-stage_1991}, which is a cutting-plane based algorithm designed for MSSP where $\bx_1,\ldots,\bx_T$ are independent. \cite{lan_complexity_2022,lan_correction_2022} show that if $\bx_t$ has at most $d$ possible realizations and $x_t\in \R^{n_t}$, then to find a $O(\epsilon T)$ suboptimal solution for all stages, SDDP needs $O(Td^T\epsilon^{-\max_{t=1,\ldots,T} n_t})$ forward-backward iterations, where each iteration involves (approximately) solving $T$ convex optimization problems (of dimension $O(\max_{t=1,\ldots,T} n_t)$). As a comparison, our algorithms do not require stage-wise independent randomness, can be applied to the general case where $d=\infty$, and the complexity has no explicit dependence on the problem dimensions $n_t$.  

\textbf{Other related works.} \cite{shapiro_inference_2003,SHAPIRO20061complexity,Shapiro2005} show that for sample average approximation type of algorithms for MSSP, the sample complexity has an \textit{upper bound} which is exponential in $T$. In fact, \cite{dyer_computational_2006,hanasusanto_comment_2016} show that even approximating the solution of $2$-stage stochastic programs is $\#P$-hard for a sufficiently high accuracy. 

Independent and concurrent to our work, \cite{cetin2025onlinestochasticpackinggeneral} also proposes implementing stochastic gradient descent on-the-fly for the network revenue management problem and the online matching problem, and their algorithms achieve similar $\exp(\text{poly}(1/\epsilon))$ complexity.


\textbf{Online optimization.} Sequential decision making is also studied through the perspective of online optimization, where the unknown part (i.e., future) of the objectives could be potentially chosen by an adversary, and the performance is compared to the optimal \textit{offline} solution. Well-known algorithms include online mirror descent \cite{Hazan2022Online,Zinkevich2003}, online restarted gradient descent \cite{BesbesGurZeevi2015}, and online primal-dual approach \cite{BuchbinderCompetitive}, to name a few. In this work, we consider the semi-online case, where the solution is constructed in an online fashion and is compared against the optimal \textit{online} solution. Our efficient online implementation is motivated by a recent line of research on smoothed online convex optimization \cite{bansal_et_al,goel19a,LiChenLi2019,LiLi2020,LiQuLi2021,Lin2012,ZhangLiLi2021}, where the cost at stage $t$ is the sum of a stage cost which depends only on $x_t$, and a switching cost $\|x_t-x_{t-1}\|$ or $\|x_t-x_{t-1}\|^2$. As a comparison, our algorithms can be applied to (convex or saddle point) problems with general couplings between consecutive decision variables, and do not require strong convexity (as required in \cite{LiLi2020,LiQuLi2021}). 

\textbf{Markov Decision Process and online planning.} Multi-stage stochastic programming can also be viewed as Markov Decision Process (MDP), where the state space is $\cup_{t=1}^T \Xi_{1:t}$, with state transition $P(\xi_{1:s},\xi'_{s:t}) = \p[\bx_{t} = \xi'_{t}|\bx_{1:s} = \xi_{1:s}]$ if $t=s+1$ and $\xi'_{1:s} = \xi_{1:s}$ and $P(\xi_{1:s},\xi'_{s:t})=0$ otherwise. From this perspective, our proposed online MDSA can be viewed as online planning algorithms targeted for ``convex'' MDP. Other well known algorithms for MDP include Model Predictive Control, Monte Carlo Tree Search, policy improvement by lookahead, to name a few. We leave detailed comparisons and potential connection between two fields to future work. 

\subsection{Contributions}

We make the following contributions to the multi-stage stochastic programming literature.

\textbf{Tractable online formulation.} We propose an online framework for the multi-stage stochastic programming problems (Section \ref{sec:setup}), where the decision for stage $t$ is needed at stage $t$, and is needed only for the corresponding realization of $\bx_{1:t}$. This reduces the output from the infinite dimensional policies to the much more \textit{trackable finite dimensional} decisions for one specific realization of the stochastic process -- in fact the dimension of the output is $O(T)$. Moreover, our framework can model \textit{randomized algorithms}, where the decision maker has additional internal random variables. 

\textbf{Hypothetical mirror descent stochastic approximation.} We propose hypothetical mirror descent updates applied to the (infinite dimensional) policies for each possible realization of $\bx_{1:T}$ (Section \ref{sec:pathwise}). The convergence behavior suggests using stochastic conditional gradients as gradient estimates. Building upon this, we provide a sampling approach to construct stochastic conditional gradients (Section \ref{sec:def-grad-cvx}), and prove the convergence of the resulting (accelerated) mirror descent stochastic approximation algorithms (Section \ref{sec:MDSA}). Compared to existing algorithms for MSSP, our algorithms do not assume stage-wise independent randomness and solve for decisions for all $T$ stages.

\textbf{Efficient online implementation.} Taking advantage of the decomposability of the updates across stages and realizations $\bx_{1:T}$, we show that in the online framework proposed, the hypothetical MDSA admits an efficient online implementation (Section \ref{sec:online-implementation}). To find $\epsilon T$ suboptimal solutions for all stages, our MDSA achieves a gradient oracle complexity of $O(T\min(e^{C/\epsilon^2},(\frac{C}{\epsilon})^{2T})$ and a space complexity $O(\epsilon^{-4})$, where $C = O(1)$ as $T\to \infty$. In particular, in the regime where $\epsilon = \Omega(1)$, the oracle complexity is $O(T)$ and the space complexity is $O(1)$. Compared to DSA, if $\epsilon=\Omega(T^{-\alpha})$ for some $\alpha\geq 0$, our MDSA has better gradient oracle complexity when $\alpha<1/2$, and better space complexity when $\alpha<1/4$.

\section{Setup}\label{sec:setup}
We consider the unconstrained and the saddle point variant of the $T$-stage stochastic programming problems on the probability space $(\Omega,\mc F, \p)$, driven by the stochastic process $\bx = (\bx_1,\bx_2,\ldots,\bx_T)$, where $\bx_t:\Omega\to \Xi_t\subset \R^{k_t}$ is a random vector, representing the randomness in stage $t$. We use $\xi_t\in \Xi_t$ to represent the value taken by the random vector $\bx_t$, and we assume that $\bx_1$ is deterministic, i.e. there exists $\xi_1\in \R^{k_1}$ such that $\Xi_1  =\{\xi_1\}$ and $\bx_1(w) = \xi_1$ for all $w\in \Omega$. 

For convenience, we denote $\mc F_t = \sigma(\bx_{1:t})$, the sub-$\sigma$-algebra representing the information available at stage $t$. For example, $\mc F_1 = \{\emptyset,\Omega\}$. 

Since the decision $x_t$ at stage $t$ is non-anticipative, i.e. depends on $\bx_{1:t}$ but not on $\bx_{(t+1):T}$, it is a random variable $\mb x_t:\Omega\to \R^{n_t}$ which is measurable w.r.t. $\mc F_t$. Equivalently, there exists a \textit{policy} $X_t: \Xi_{1:t}\to \R^{n_t}$ such that $\mb x_t(w) = X_t(\bx_1(w),\ldots,\bx_t(w))$ for all $w\in \Omega$. Thus, the problems become infinite dimensional, where the decision variables are policies $X = (X_1,\ldots,X_T)$ in the policy space $ \mc (\mc P_1(\R^{n_1}),\ldots,\mc P_T(\R^{n_T}))$. Here, for any subset of a finite dimensional Euclidean space $S$, we define $\mc P_t(S) = \{X_t:\Xi_{1:t}\to S,~X_t \text{ is measurable}\}$. \footnote{Here $X_t$ is measurable w.r.t. the Borel $\sigma$-algebra on $\Xi_{1:t}$ and $S$. As a consequence, $\mb x_t$ is measurable w.r.t. $\mc F_t$.}

\textbf{Notations.} To simplify notations, in the rest of the paper, we use $X_t:\Xi_{1:t} \to \R^{n_t}$ to denote the policy for stage $t$, $\mb x_t:\Omega\to \R^{n_t}$ defined as $\mb x_t(w) = X_t(\bx_{1:t}(w))$ to denote the induced random variable, and $X$ and $\mb x$ without subscripts are shorthands for $X_{1:T}$ and $\mb x_{1:T}$, respectively. Similar definitions extend to other policies/variables ($Y_t$ for the dual variable, $Z_t =(X_t,Y_t)$ for the primal-dual pair, and $G_t$ for the approximate conditional gradient). 

In this section, we address the following three concerns in terms of problem formulation.
\begin{itemize}
    \item What are the objectives and the constraints? (Section \ref{sec:obj_cons})
    \item What are the desired outputs? (Section \ref{sec:seq_output})
    \item How to model randomized policies? (Section \ref{sec:product_space})
\end{itemize}
Then in Section \ref{sec:outline}, we provide the road map for the rest of the work.

\subsection{Objectives and constraints}\label{sec:obj_cons}

Motivated by control problems such as tracking (see Section \ref{sec:exp-soco-gen} for an example), we consider the following \textit{multi-stage unconstrained programming} (\msup) problem: 
\begin{align}\label{eq:obj_unconstrained-online}\tag{\msup}
   & \inf_{X_t\in \mc P_t(\mc X_t),~t=1,\ldots,T} \E[f(\mb x,\bx)],\quad \mb x_t = X_t(\bx_{1:t}),~t=1,\ldots,T\\
   & f(x_1,\ldots,x_T,\xi):= f_1(x_1,\xi_1) + \sum_{t=2}^T f_{t}(x_{t-1},x_t,\xi_t), \nonumber
\end{align}
where $f_t:\R^{n_{t-1}}\times \R^{n_t}\times\Xi_t\to \R$ ($n_0 = 0$), $\mc X_t\subset \R^{n_t}$ for all $t=1,\ldots,T$. The expectation is taken over the randomness in $\bx$. We make the following assumption.  

\begin{assumption}\label{assu:convex}
    For each $t=1,2,\ldots,T$, 
    \begin{enumerate}
        \item $f_t(\cdot,\cdot,\xi_t)$ is convex and differentiable in $(x_{t-1},x_t)$ for all $\xi_t\in \Xi_t$; 
        \item $\mc X_t\subset \R^{n_t}$ is a nonempty compact convex subset;
        \item $f_t$, $\frac{\partial }{\partial x_{t-1}}f_t$, and $\frac{\partial }{\partial x_{t}}f_t$ are measurable w.r.t. the Borel $\sigma$-algebras on $\R^{n_{t-1}}\times \R^{n_t}\times\Xi_t$, and $\R, \R^{n_{t-1}},\R^{n_t}$, respectively.
    \end{enumerate}
    In addition, \eqref{eq:obj_unconstrained-online} has a solution $X^*$ (and equivalently, an induced solution $\mb x^* $ where $\mb x_t^* = X_t(\bx_{1:t})$) for all $t$. 
\end{assumption}

To measure the quality of a solution $\mb x $, we consider suboptimality in terms of the objective value $\gapu(\mb x):=\E[f(\mb x,\bx)-f(\mb x^*,\bx)]$. We say that $\mb x$ is an $\epsilon$-suboptimal solution if $\gapu(\mb x)\leq \epsilon$. For a policy $X$, we denote $\gapu(X)=\gapu(\mb x)$, where $\mb x_t = X_t(\bx_{1:t})$ is the induced solution. 

In addition, motivated by penalty version of the classical MSSP problems (Appendix \ref{sec:lagrangian}), we consider the following \textit{multi-stage saddle point} (\mssp) problem: 
 \begin{align}\label{eq:minmax-online}\tag{\mssp}
        &\inf_{X_t\in \mc P_t(\mc X_t),~t=1,\ldots,T} \E[\tilde{\phi}(\mb x,\bx)],\quad \mb x_t = X_t(\bx_{1:t}),~t=1,\ldots,T\\
        &\tilde{\phi}(x_1,\ldots,x_T,\xi):= \tilde{\phi}_1(x_1,\xi_1) + \sum_{t=2}^T \tilde{\phi}_{t}(x_{t-1},x_t,\xi_t)\nonumber\\
        &\tilde{\phi}_{t}(x_{t-1},x_t,\xi_t):= \sup_{y_t\in \mc Y_t}\phi_{t}(x_{t-1},x_t,y_t,\xi_t),\quad t = 1,\ldots,T\nonumber
\end{align}
where $\phi_t:\R^{n_{t-1}}\times\R^{n_t}\times \R^{m_t}\times \Xi_t\to \R$ ($n_0 =0$), $\mc X_t \in \R^{n_t}$ and $\mc Y_t\in \R^{m_t}$ for all $t$. For convenience, $m = \sum_{t=1}^T m_t$, and we denote
\begin{displaymath}
    {\phi}(x_1,y_1,\ldots,x_T,y_T,\xi):= {\phi}_1(x_1,y_1,\xi_1) + \sum_{t=2}^T {\phi}_{t}(x_{t-1},x_t,y_t,\xi_t).
\end{displaymath}

Similar to the above setting for the unconstrained problem, we make the following assumption.
\begin{assumption}\label{assu:convex_saddle}
For each $t=1,2,\ldots,T$, 
    \begin{enumerate}
        \item $\phi_t(\cdot,\cdot,\cdot,\xi_t)$ is differentiable, convex in $(x_{t-1},x_t)$ and concave in $y_t$ for all $\xi_t\in \Xi_t$; 
        \item $\mc X_t\subset \R^{n_t}$ and $\mc Y_t\subset \R^{m_t}$ are nonempty compact convex subsets;
        \item $\phi_t$, $\frac{\partial }{\partial x_{t-1}}\phi_t$, $\frac{\partial }{\partial x_{t}}\phi_t$, and $\frac{\partial }{\partial y_{t}}\phi_t$ are measurable w.r.t. the Borel $\sigma$-algebras on $\R^{n_{t-1}}\times \R^{n_t}\times\R^{m_t}\times \Xi_t$, and $\R, \R^{n_{t-1}},\R^{n_t},\R^{m_t}$, respectively.
    \end{enumerate}
In addition, $X^*$ is a solution to \eqref{eq:minmax-online}, with a corresponding $Y^*$ where $Y_t^*\in \mc P_t(\mc Y_t)$, such that $\tilde{\phi}_{t}(X^*_{t-1}(\xi_{1:(t-1)}), X^*_t(\xi_{1:t}),\xi_{1:t})= \phi_{t}( X^*_{t-1}(\xi_{1:(t-1)}), X^*_t(\xi_{1:t}), Y^*_t(\xi_{1:t}),\xi_{1:t})$ for all $\xi\in \Xi$.
\end{assumption}

Although in \eqref{eq:minmax-online}, the decision variables are $X_{1:T}$, due to the saddle point format of $\widetilde{\phi}_t$, we aim at solving for the pairs $Z_t = (X_t,Y_t)\in \mc P_t(\mc X_t\times \mc Y_t)$, and we measure the quality of $Z_{1:T}$ using the duality gap. More precisely, we say $\mb z$ is $\epsilon$-suboptimal if 
\begin{displaymath}
    \gaps(\mb z):=\sup_{Y'_t\in \mc P_t(\mc Y_t),t=1,\ldots,T}\inf_{X'_t\in \mc P_t( \mc X_t) ,t=1,\ldots,T}\E[\phi(\mb x,\mb y',\bx)-\phi(\mb x',\mb y,\bx)]. 
\end{displaymath}
Above, $\mb z_t = (\mb x_t,\mb y_t)$, and $(\mb x'_t,\mb y'_t)=(X'_t,Y'_t)(\bx_{1:t})$. For a policy $Z$, we denote $\gaps(Z)=\gaps(\mb z)$, where $\mb z_t = Z_t(\bx_{1:t})$.

The saddle point problem can be seen as a penalty version of the classical MSSP problems. We provide comparison between the two, with a detailed discussion on the relation between the duality gap and the objective value suboptimal and constraint violation in Appendix \ref{sec:lagrangian}. Moreover, we compare the performance metrics for $X_{1:T}$ and for $X_1$ in Appendix \ref{sec:compare-stage-one}.

\subsection{Sequential output}\label{sec:seq_output}
For both \eqref{eq:obj_unconstrained-online} and \eqref{eq:minmax-online}, the decision variables are infinite dimensional policies $\mc P_t(\mc X_t)$ and $\mc P_t(\mc Z_t)$, respectively. Thus, it is impossible even to write down a solution. In the special case where $\bx$ can take only a finite number of values ($|\Xi_t|<\infty$ for all $t$), these problems can be reformulated as problems of dimension $\sum_{t=1}^T \prod_{s=1}^t|\Xi_s|\times n_t$. However, this reformulation often suffers from the curse of dimensionality, as the effective dimension becomes exponential in $T$ due to the multi-stage setting. 

To reduce the dimension of the output, prior works have taken two approaches. The first approach, represented by the Dynamic Stochastic Approximation algorithm, aims at solving only for the first stage decision $X_1$ (see the discussion in Section \ref{sec:compare-stage-one}). The second approach, as represented by the SDDP algorithm, assumes that $(\bx_1,\bx_2,\ldots,\bx_T)$ are independent. This allows for tractable stage-wise decomposition through dynamic programming, resulting in optimization problems in dimension $n$. 

In this work, we aim at solving for decisions for \textit{all stages} with potentially \textit{dependent} randomness. This is possible due to the following two \textit{key observations}. First, even in the most general setting where there are infinite realizations for $\bx$ and the randomness between stages is correlated, at stage $t$, only $X_t(\bx_{1:t}(w))\in \R^{n_t}$ is needed, where $w\in \Omega$ is the true underlying event. That is, only a $n_t$ dimensional vector is the \textit{relevant} component of the infinite dimension decision variable $X_t$; for $T$ stages, the total output relevant is $(X_1(\bx_{1}(w)),\ldots,X_T(\bx_{1:T}(w)))$, a vector of dimension $n$. However, a prior (before stage $1$) the decision maker does not know the realization for $\bx(w)$, making it unclear which components to solve for. Nevertheless, as the second observation, we point out that since $\bx_{1:t}(w)$ is known by stage $t$, the decision maker knows which component of $X_t$ to solve for \textit{at stage $t$}. Thus, if what is needed from $X_t$ is only the relevant component $X_t(\bx_{1:t}(w))$, and it is needed only at stage $t$, then the problem becomes truly finite dimensional. We illustrate this idea using an example where $|\Omega|<\infty$ in Figure \ref{fig:tree}.

\begin{figure}[htbp]
\centering
\resizebox{.8\linewidth}{!}{
    \begin{tikzpicture}[darkstyle/.style={circle,draw,fill=gray!40,minimum size=23},
lightstyle/.style={draw,minimum size=20},
reds/.style={draw,fill=red!20,minimum size=23},
greens/.style={draw,fill=green!20,minimum size=23},
blues/.style={draw,fill=blue!20,minimum size=23}]

\draw[blue, very thick,pattern=north west lines, pattern color=blue!30] (0.8,2.4) rectangle ++ (7.2,1.3);
\draw[green, very thick,pattern=north west lines, pattern color=green!30] (0.8,0.9) rectangle ++ (2.3,1.3);
\draw[green, very thick] (3.25,0.9) rectangle ++ (2.3,1.3);
\draw[green, very thick] (5.7,0.9) rectangle ++ (2.3,1.3);
\draw[red, very thick] (0.8+1.22*0,-0.6) rectangle ++ (1.1,1.3);
\draw[red, very thick,pattern=north west lines, pattern color=red!30] (0.8+1.22*1,-0.6) rectangle ++ (1.1,1.3);
\foreach \xx in {2,...,5} 
    \draw[red, very thick] (0.8+1.22*\xx,-0.6) rectangle ++ (1.1,1.3);
\foreach \y in {0,...,2} 
  \foreach \x in {0,...,5}
  {\pgfmathtruncatemacro{\label}{\x+1}
    \node [darkstyle]  (\x\y) at (1.2*\x+1.4,1.5*\y) {$\label$};}
\foreach \yy in {0,...,2} 
    {\pgfmathtruncatemacro{\label}{3-\yy}
    \node [lightstyle]  (\yy) at (0,1.5*\yy) {$t = \label$};} 

\node [draw,preaction={pattern=north east lines, pattern color=blue!30},minimum size=23] (layer1-1) at (12.5,3) {$(1)$};
\node [draw,preaction={pattern=north east lines, pattern color=green!30},minimum size=23] (layer2-1) at (10,1.6) {$(1,1)$};
\node [greens] (layer2-2) at (12.5,1.6) {$(1,2)$};
\node [greens] (layer2-3) at (15,1.6) {$(1,3)$};
\node [reds] (layer3-1) at (9+1.4*0,0) {$(1,1,1)$};
\node [draw,preaction={pattern=north east lines, pattern color=red!30},minimum size=23] (layer3-2) at (9+1.4*1,0) {$(1,1,2)$};

\foreach \xxx in {2,...,3} 
    {\pgfmathtruncatemacro{\label}{\xxx+1}
    \node [reds] (layer3-\label) at (9+1.4*\xxx,0) {$(1,2,\label)$};}
\foreach \xxx in {4,...,5} 
    {\pgfmathtruncatemacro{\label}{\xxx+1}
    \node [reds] (layer3-\label) at (9+1.4*\xxx,0) {$(1,3,\label)$};}
\draw (layer1-1)--(layer2-1);
\draw (layer1-1)--(layer2-2);
\draw (layer1-1)--(layer2-3);
\draw (layer2-1)--(layer3-1);
\draw (layer2-1)--(layer3-2);
\draw (layer2-2)--(layer3-3);
\draw (layer2-2)--(layer3-4);
\draw (layer2-3)--(layer3-5);
\draw (layer2-3)--(layer3-6);
\end{tikzpicture}}
\caption{Example probability space with $\Omega = \{1,\ldots,6\}$, $\bx_1(w) = 1$, $\bx_2(w) = \lceil w/2\rceil$, $\bx_3(w) = w$. Left: probability space $\Omega$ with $\mc F_{1:3}$; each node at layer $t$ represent $\mb x_t(w)$; $\mb x_t$ is measurable w.r.t. $\mc F_t$ iff $\mb x_t(w) = X_t(\bx_{1:t}(w))$ is a constant in each box. Right: policy space $\mc P$; each node at layer $t$ represents one realization of $\bx_{1:t}=\xi_{1:t}$, and corresponds to $X_t(\xi_{1:t})$. Suppose $w = 2$, only the hatched variables $X_1(1),X_2(1,1),X_3(1,1,2)$ are needed}\label{fig:tree}
\end{figure}

\subsection{Online framework with randomized policies}\label{sec:product_space}
With the two observations in Section \ref{sec:seq_output}, we propose the following \textit{online} framework for problems \eqref{eq:obj_unconstrained-online} and \eqref{eq:minmax-online}. To model randomized policies, we equip the decision maker with \textit{internal} randomness represented by $(\Omega_{\internal},\mc F_{\internal},\p_{\internal})$\footnote{The internal randomness serves as the \textit{random seed} for the decision maker. As an example, it could be $([0,1],\mc B([0,1]))$ with the uniform measure, which could be used for subsequent sampling. }, independent of the \textit{external} randomness in $(\Omega,\mc F,\p)$. That is, we consider the product space $(\overline{\Omega},\overline{\mc F},\overline{\p})=(\Omega,\mc F,\p)\otimes (\Omega_{\internal},\mc F_{\internal},\p_{\internal})$ where $\overline{w} = (w,w_{\internal})$. With abuse of notation, we use $\bx$ to represent both the random vector in $\Omega$ and in $\overline{\Omega}$, where $\bx(w,w_{\internal}) := \bx(w)$. The decision maker's policies now depend both on $\bx$ and on a set of random variables defined on $\Omega_{\internal}$, denoted as $\{\bz_{k},k\in \mc K\}$ where $\bz_k:\Omega_{\internal}\to \Gamma_k$ is a random variable taking values in a generic measurable space $(\Gamma_t,\mc G_t)$, independent of the randomness in $(\Omega,\mc F,\p)$, and we extend the definition to $\overline{\Omega}$ such that $\bz_t(w,w_{\internal}) = \bz_t(w_{\internal})$. Here, $\mc K$ is an index set. 

Equipped with this internal randomness, the decision maker's policy now becomes $\overline{X}_t: \Xi_{1:t} \times \Gamma_{S(t)}\to \R^{n_t}$ for \eqref{eq:obj_unconstrained-online} (and similarly for $\overline{Z}_t$ for \eqref{eq:minmax-online}), where $S(t)\subset \mc K$. That is, $\overline{X}_t$ depends both on $\bx_{1:t}$ and $\{\bz_k,~k\in S(t)\}$. In addition, the expectation on $(\Omega_{\internal},\mc F_{\internal})$ is denoted as $\E_{\internal}$; on the product space, we denote $\overline{\E}$ as the expectation w.r.t. both $w$ and $w_{\internal}$. 

\textbf{Sequential input, sequential output.} Before stage $1$, the decision maker observes $\{\bz_{k}(w_{\internal}) = \zeta_k,~k\in \mc K\}$. At stage $t=1,2,\ldots,T$, the decision maker observes the external stochastic process $\bx_t(w)=\xi_t$, then decides $\overline{X}_t(\xi_{1:t},\xi_{S(t)}) = x_t\in \R^{n_t}$. 

\textbf{Evaluation metrics.} For \eqref{eq:obj_unconstrained-online}, due to the internal and external randomness, we say $\overline{X}$ is an $\epsilon$-suboptimal policy if the randomized policies $\overline{X}_t(\cdot,\bz_{S(t)}):\Omega_{\internal}\to \mc P_t(\mc X_t)$, defined as $\overline{X}_t(\cdot,\bz_{S(t)})(w_{\internal})(\xi_{1:t}) = \overline{X}_t(\xi_{1:t},\bz_{S(t)}(w_{\internal}))$ for all $w_\internal\in \Omega_{\internal}$ and $\xi_{1:t}\in \Xi_{1:t}$ satisfies the following: 
\begin{align*}
    \E_{\internal}[\gapu(\overline{X})]:&=\E_{\internal}[\gapu(\overline{X}(\cdot,\bz))]\\
    &=\E_{\internal}[\gapu(\overline{X}_1(\cdot,\bz_{S(1)}),\ldots,\overline{X}_T(\cdot,\bz_{S(T)}))] \leq \epsilon. 
\end{align*}
For convenience, we use the abbreviation $\overline{X}_t^{(w_{\internal})}(\cdot) = \overline{X}_t(\cdot,\bz_{S(t)})(w_{\internal})\in \mc P_t(\mc X_t)$ for all $w_{\internal}\in \Omega_{\internal}$. Similar metrics can be defined for the saddle point problem \eqref{eq:minmax-online} using $\gaps$. 

\subsection{Outline}\label{sec:outline}
On a high level, our MDSA algorithms use the internal random variables $\bz=\{\bz_k,k\in \mc K\}$ to construct stochastic approximations to the gradients, which are then used in the mirror descent updates for the policies. Below is an outline for the rest of the work.
\begin{itemize}
    \item In Section \ref{sec:pathwise}, we fix a realization of $\bz = \zeta$ and propose ``hypothetical'' mirror descent updates, which output $\cup_{t=1}^T \{\overline{X}_t(\xi_{1:t},\xi_{S(t)}),~ \xi_{1:t}\in \Xi_{1:T}\}$ for \eqref{eq:obj_unconstrained-online} (similarly, $\overline{Z}_t$ for \eqref{eq:minmax-online}). 
    \item In Section \ref{sec:SA}, we consider the problems in the product space $\Omega\times\Omega_{\internal}$:
    inspired by the convergence properties of $\overline{X}_t(\xi_{1:t},\xi_{S(t)})$ (and $\overline{Z}_t$) found in Section \ref{sec:pathwise}, we propose using the internal randomness $\bz$ to construct stochastic conditional gradients, which leads to the multi-stage MDSA algorithms.
    \item In Section \ref{sec:online-implementation}, we show that these MDSA algorithms induce policies which can be implemented efficiently in the ``sequential input, sequential output'' model. 
    \item In Section \ref{sec:num_exp}, we present promising numerical experiments. 
\end{itemize}

\section{Mirror descent: a deterministic perspective}\label{sec:pathwise}
We first give a brief review of mirror descent \cite{Bubeck2015,LanBook,Nemirovski2004Prox}. We equip $\R^{n_0}$ with the Euclidean inner product $\langle \cdot,\cdot \rangle$ and a norm $\|\cdot\|$ not necessarily induced by the inner product. Recall that for a convex compact subset $\mc Q\subset \R^{n_0}$, $v:\mc Q\to \R$ is a distance generating function \cite{LanBook,NemirovskiRobustSA2009} for $\mc Q$ if 
$v$ is convex and continuous on $\mc Q$, and 
\begin{displaymath}
    \mc Q^o:=\{x\in \mc Q|\exists g\in \R^{n_0},~x\in \argmin_{x'\in \mc Q} v(x') +\langle g, x'\rangle \}
\end{displaymath}
is a convex set, and restricted to $\mc Q^o$, $v$ is continuously differentiable and $1$-strongly convex, i.e.:
\begin{displaymath}
    \langle x'-x,\nabla v(x')-\nabla v(x)\rangle\geq \|x'-x\|^2.
\end{displaymath}
Then, initialized at some $x^{(0)}\in \mc Q^o$, mirror descent updates the variables iteratively for $l = 0,1,\ldots$
\begin{equation}\label{eq:md_general}
    x^{(l+1)} = \argmin_{x'\in \mc Q} \gamma_l\langle g^{(l)},x'\rangle + D_{v}(x',x^{(l)}),
\end{equation}
where $D_{v}(y,x) := v(y) - v(x) - \langle \nabla v(x),y-x\rangle$ is the Bregman divergence induced by $v$ and $g^{(l)}\in \R^n$ is a subgradient of the objective function, and $\gamma_l\geq 0$ is usually chosen based on properties of the objective function $f$ and the set $\mc X$. Below, we state a well known property of the mirror descent update \eqref{eq:md_general}.
\begin{lemma}[Lemma 2.1 \cite{NemirovskiRobustSA2009}]\label{lm:MD-prop-lemma}
    For any $x\in \mc Q^o$ and $g\in \R^{n_0}$, consider
    \begin{displaymath}
            x^{+} = \argmin_{x'\in \mc Q} \langle g,x'\rangle + D_{v}(x',x).
    \end{displaymath}
    Then the following holds for any $x'\in \mc Q$,
    \begin{displaymath}
        D_v(x',x^{+})\leq D_v(x',x) + \langle  g, x'-x\rangle +\frac{1}{2}\|g\|_*^2.
    \end{displaymath}
\end{lemma}

In fact, we can use arbitrary $g^{(l)}$ in the update \eqref{eq:md_general} (since Lemma \ref{lm:MD-prop-lemma} holds for all $g$). However, the suboptimality of the solution found depends on how close $g^{(l)}$ is to the gradient of the objective.

We propose a natural extension of the above mirror descent step to the infinite dimensional space, where \eqref{eq:md_general} is applied to $\overline{X}_t(\xi_{1:t},\zeta_{S(t)})$ for all $\xi\in \Xi$ and $t$, with a generic $\overline{G}_t^{(l)}(\cdot,\zeta_{S'(t,l)})\in \mc P_t(\R^{n_t})$ in place of $g^{(l)}$. In this section, we fix a realization for all  internal random variables $\{\bz_k = \zeta_k,~k\in \mc K\}$ and look at the suboptimality of the policies $\overline{X}_t(\cdot,\zeta_{S(t)})$ (and $\overline{Z}_t$) found by these (infinite-dimensional) hypothetical algorithms. The convergence performance of these algorithms provides insight on desired properties of $\overline{G}_t^{(l)}$ -- it should approximate the conditional gradient -- which we explore in Section \ref{sec:SA}.

\subsection{Mirror descent for \eqref{eq:obj_unconstrained-online}}\label{sec:md_unconstraint}

We equip $\R^{n}$ with the Euclidean inner product $\langle \cdot,\cdot \rangle$. For all $t$, $\R^{n_t}$ is equipped with the norm $\|\cdot\|$ not necessarily induced by the Euclidean inner product, and $v_t:\mc X_t\to \R$ is a distance generating function which is $1$-strongly convex w.r.t. the norm $\|\cdot \|$ such that $\mc X_t$ admits easy projection using $D_{v_t}$. We equip $\R^n$ with the norm $ \|x_{1:T}\|^2 = \sum_{t=1}^T \|x_t\|^2$ and $\mc X=\prod_{t=1}^T \mc X_t$ with the distance generating function $v(x_{1:T}):=\sum_{t=1}^T v_t(x_t)$. 

We further make the following assumption.
\begin{assumption}\label{assumption:measurable}
     $\nabla v_t(X_t):\Xi_{1:t}\to \R^{n_t}$ is measurable for any $X_t\in \mc P_t(\mc X_t^{(o)})$ (and so $\nabla v_t(X_t)\in \mc P_t(\R^{n_t})$), and $\overline{G}_t^{(l)}(\cdot,\zeta_{S'(t,l)}):\Xi_{1:t}\to \R^{n_t}$ is measurable (and so $\overline{G}_t^{(l)}(\cdot,\zeta_{S'(t,l)})\in  \mc P_t(\R^{n_t})$) for all possible realizations $\{\bz_k = \zeta_{k},~k\in \mc K\}$.
\end{assumption}

Consider the following hypothetical iterative updates for all $\xi\in \Xi$, given $\{\zeta_{k},~k\in \mc K\}$.

\begin{algorithm}
\caption{Hypothetical mirror descent update for \eqref{eq:obj_unconstrained-online}}\label{alg:hypo-ms-u}
\begin{algorithmic}
\Require Realizations of the random variables $\{\bz_k(w_{\internal})= \zeta_k,~k\in \mc K\}$; initialization $X^{(0)}_t\in \mc P_t(\mc X^o_t)$ for all $t$; step sizes $\gamma_0,\ldots,\gamma_{L-1}>0$; oracles $\overline{G}^{(l)}_t:\Xi_{1:t}\times \Gamma_{S'(t,l)}\to \R^{n_t}$ for all $t$ and $l=0,\ldots,L-1$. 
\Ensure $\overline{X}_t(\cdot,\zeta_{S(t)})\in \mc P_t(\mc X_t)$ for all $t$.
\For{$l=0,1,\ldots,L-1$}
\For{$\xi\in \Xi$, $t=1,\ldots,T$}
\State Set $G_t^{(l)}(\xi_{1:t}) = \overline{G}_t^{(l)}(\xi_{1:t},\zeta_{S'(t,l)})$.
\State Compute $\overline{X}_t^{(l+1)}(\xi_{1:t},\zeta_{S'(t,l)}) = X_{t}^{(l+1)}(\xi_{1:t})$ using
\begin{equation}\label{eq:MD1}
    X_{t}^{(l+1)}(\xi_{1:t}) = \argmin_{x_t\in \mc X_t} \langle \gamma_l G^{(l)}_t(\xi_{1:t}),X^{(l)}_t(\xi_{1:t}) \rangle+ D_{v_t}(x_t, X^{(l)}_t(\xi_{1:t})).
\end{equation}
\EndFor
\EndFor
\State Compute 
\begin{equation}\label{eq:ergodic-u}
    \overline{X}_t(\cdot,\zeta_{S(t)})=\frac{\sum_{l=0}^L \gamma_lX_t^{(l)}}{\sum_{l=0}^L \gamma_l},\quad t=1,\ldots,T. 
\end{equation}
\end{algorithmic}
\end{algorithm}
Recall that $\mb x_t^{(l)}:\Omega\to \R^{n_t}$ is defined as $\mb x_t^{(l)}(w) =X_t^{(l)}(\bx_{1:t}(w))$, $\mb g_t^{(l)}:\Omega\to \R^{n_t}$ is defined as $\mb g_t^{(l)}(w) =G_t^{(l)}(\bx_{1:t}(w))$.
\begin{lemma}\label{lm:MD1}
    Under Assumptions \ref{assu:convex} and \ref{assumption:measurable}, consider the update in Algorithm \ref{alg:hypo-ms-u}. Then, $\overline{X}_t(\cdot,\xi_{S(t)}),X_t^{(l)}\in \mc P_t(\mc X_t^{o})$ for all $t,l$. 
    
    In addition, for $\widetilde{\mb x}^{(L)}:=\frac{\sum_{l=0}^L \gamma_l\mb x_t^{(l)}}{\sum_{l=0}^L \gamma_l}$, we have
   \begin{align*}
        \gapu(\widetilde{\mb x}^{(L)}) \leq \frac{\E[D_{v}(\mb x^*,\mb x^{(0)})] + \frac{1}{2}\sum_{l=0}^L \gamma_l^2  \E[\|\mb g^{(l)}\|_*^2]-\sum_{l=0}^L\gamma_l \E[\langle \Delta^{(l)},\mb x^{(l)} - \mb x^* \rangle]}{\sum_{l=0}^L \gamma_l},
   \end{align*}
    where $\Delta^{(l)} = \Delta_{1:T}^{(l)}$ with $\Delta^{(l)}_t=\mb g^{(l)}_t-\E[\frac{\partial}{\partial x_t}f(\mb x^{(l)},\bx)|\mc F_t]$, and $\mb g^{(l)} = \mb g_{1:T}^{(l)}$. 
\end{lemma}
\begin{proof}[Proof of Lemma \ref{lm:MD1}] For the first claim, notice that $G_t^{(l)}$ is measurable by Assumption \ref{assumption:measurable}, and assuming $X_t^{(l)}$ is also measurable, then $(\xi_{1:t},x_t) \to \langle \gamma G_t^{(l)}(\xi_{1:t}),x_t\rangle + D_{v_t}(x_t,X_t^{(l)}(\xi_{1:t}))$ is a normal integrand (since the function $(g,x_t)\to \langle g,x_t\rangle$ is continuous, it is a normal integrand by Example 14.30 in \cite{rockafellarVariational}; taking $g = \gamma G_t^{(l)} -\nabla v_t(X_t^{(l)})$ and using Proposition 14.45 \cite{rockafellarVariational} gives the result). Thus, $X_t^{(l+1)}(\xi_{1:t}) = \argmin_{x_t\in \mc X_t} \gamma G_t^{(l)}(\xi_{1:t}),x_t\rangle + D_{v_t}(x_t,X_t^{(l)}(\xi_{1:t}))$ is measurable (Theorem 14.37 \cite{rockafellarVariational}), thereby is in $\mc P_t(\mc X^{(o)})$. The first claim then follows from induction. 

By Lemma \ref{lm:MD-prop-lemma}, for any $x_t\in \mc X_t$, and $w\in \Omega$
\begin{displaymath}
    \langle \gamma_l \mb g^{(l)}_t(w),\mb x^{(l)}_t(w) - x_t\rangle\leq D_{v_t}(x_t,\mb x^{(l)}_t(w)) - D_{v_t}(x_t,\mb x^{(l+1)}_t(w)) + \frac{1}{2}\gamma_l^2 \|\mb g^{(l)}_t(w)\|_*^2.
\end{displaymath}
Thus, for any $\mb x_t':\Omega\to \mc X_t$ which is measurable w.r.t. $\mc F_t$ and any $w\in \Omega$,
\begin{align*}
    &\quad \E[\langle \gamma_l \frac{\partial}{\partial x_t}f(\mb x^{(l)},\bx),\mb x_t^{(l)}- \mb x'_t \rangle|\mc F_t](w)\\
    &=\langle \gamma_l \E[\frac{\partial}{\partial x_t}f(\mb x^{(l)},\bx)|\mc F_t],\mb x_t^{(l)}- \mb x'_t \rangle(w)\\
    &=\langle \gamma_l (\mb g_t^{(l)} -\Delta_t^{(l)}),\mb x_t^{(l)}- \mb x'_t \rangle(w)\\
    &\leq D_{v_t}(\mb x'_t(w),\mb x^{(l)}_t(w)) - D_{v_t}(\mb x'_t(w),\mb x^{(l+1)}_t(w)) \\
    &\quad + \frac{1}{2}\gamma_l^2 \|\mb g^{(l)}_t(w)\|_*^2-\gamma_l  \langle \Delta_t^{(l)},\mb x^{(l)}_t - \mb x'_t\rangle(w).
\end{align*}
Taking expectation and summing over $l$ and $t$, and using $D_{v_t}\geq 0$, we get 
\begin{align}\label{eq:md-cum}
     \sum_{l=0}^L \gamma_l \E[\langle \frac{\partial}{\partial x}f(\mb x^{(l)},\bx),\mb x^{(l)}- \mb x'\rangle]\leq &\E[D_{v}(\mb x',\mb x^{(0)})]+  \frac{1}{2}\sum_{l=0}^L\gamma_l^2 \E[\|\mb g^{(l)}\|_*^2]\\
     &-\sum_{l=0}^L\gamma_l \E[ \langle \Delta^{(l)},\mb x^{(l)} - \mb x'\rangle].\nonumber
\end{align}

By convexity of $f(\cdot,\xi)$ we have for any $w\in \Omega$, 
\begin{align*}
    &\quad \sum_{l=0}^L\gamma_l\langle \frac{\partial}{\partial x}f(\mb x^{(l)}(w),\bx(w)),\mb x^{(l)}(w)- \mb x'(w) \rangle\\
    &\geq  (\sum_{l=0}^L\gamma_l)(f(\widetilde{\mb x}^{(L)}(w),\bx(w))-f(\mb x'(w),\bx(w))).
\end{align*}
Finally, taking $X'_t = X^*_t$, and taking expectation w.r.t. $w$ and dividing by $ (\sum_{l=0}^L\gamma_l)$ gives the result. 
\end{proof}

\subsection{Mirror descent for \eqref{eq:minmax-online}}\label{sec:md_saddle}

For the saddle problem \eqref{eq:minmax-online}, we denote $\mc Z_t = \mc X_t\times \mc Y_t$, and 
\begin{equation}\label{eq:def-phi-grad}
    \frac{\tilde{\partial} \phi}{\partial z_t}(z,\xi) = \begin{bmatrix}
    \frac{\partial}{\partial x_t}\phi(z,\xi)\\-\frac{\partial}{\partial y_t}\phi(z,\xi)
\end{bmatrix},\quad \frac{\tilde{\partial} \phi}{\partial z}(z,\xi) = \begin{bmatrix}
    \frac{\partial}{\partial x}\phi(z,\xi)\\-\frac{\partial}{\partial y}\phi(z,\xi)
\end{bmatrix}.
\end{equation}

We equip $\R^{n+m}$ with the Euclidean inner product $\langle \cdot,\cdot \rangle$. For all $t$, $\R^{n_t}$ ($\R^{m_t}$) are equipped with the norm $\|\cdot\|$ not necessarily induced by the Euclidean inner product, and $v_t:\mc X_t\to \R$ ($u_t:\mc Y_t\to \R$) is a distance generating function which is $1$-strongly convex w.r.t. the norm $\|\cdot \|$ such that $\mc X_t$ ($\mc Y_t$) admits easy projecting using $D_{v_t}$ ($D_{u_t}$). We equip $\R^{n+m}$ with the norm $\|z_{1:T}\|^2 = \sum_{t=1}^T (\|x_t\|^2 + \|y_t\|^2)$ and $\mc Z=\prod_{t=1}^T \mc Z_t$ with the distance generating function $w(z_{1:T}):=\sum_{t=1}^t (v_t(x_t) + u_t(y_t))$. 

Similarly, we make the following assumption.
\begin{assumption}\label{assumption:measurable-saddle}
     $\nabla v_t(X_t):\Xi_{1:t}\to \R^{n_t}$ is measurable for any $X_t\in \mc P_t(\mc X_t^{(o)})$ (and so $\nabla v_t(X_t)\in \mc P_t(\R^{n_t})$), and similarly for $u_t$. $\overline{G}_t^{(l)}(\cdot,\zeta_{S'(t,l)}):\Xi_{1:t}\to \R^{n_t+m_t}$ is measurable (and so $\overline{G}_t^{(l)}(\cdot,\zeta_{S'(t,l)})\in  \mc P_t(\R^{n_t+m_t})$) for all possible realizations $\{\bz_k = \zeta_{k},~k\in \mc K\}$.
\end{assumption}

\begin{algorithm}
\caption{Hypothetical mirror descent update for \eqref{eq:minmax-online}}\label{alg:hypo-ms-s}
\begin{algorithmic}
\Require Realizations of the random variables $\{\bz_k(w_{\internal})= \zeta_k,~k\in \mc K\}$; initialization $Z^{(0)}_t\in \mc P_t(\mc X^o_t\times \mc Y_t^o)$ for all $t$; step sizes $\gamma_0,\ldots,\gamma_{L-1}>0$; oracles $\overline{G}^{(l)}_t:\Xi_{1:t}\times \Gamma_{S'(t,l)}\to \R^{n_t+m_t}$ for all $t$ and $l=0,\ldots,L-1$. 
\Ensure $\overline{Z}_t(\cdot,\zeta_{S(t)}) \in \mc P_t(\mc Z_t)$ for all $t$.
\For{$l=0,1,\ldots,L-1$}
\For{$\xi\in \Xi$, $t=1,\ldots,T$}
\State Set $G_t^{(l)}(\xi_{1:t}) = \overline{G}_t^{(l)}(\xi_{1:t},\zeta_{S'(t,l)})$.
\State Compute $\overline{Z}_t^{(l+1)}(\xi_{1:t},\zeta_{S'(t,l)}) = Z_{t}^{(l+1)}(\xi_{1:t})$ using
\begin{equation}\label{eq:MD2}
    Z_{t}^{(l+1)}(\xi_{1:t}) = \argmin_{z_t\in \mc Z_t} \langle \gamma_l G^{(l)}_t(\xi_{1:t}),Z^{(l)}_t(\xi_{1:t}) \rangle+ D_{w_t}(x_t, Z^{(l)}_t(\xi_{1:t})).
\end{equation}
\EndFor
\EndFor
\State 
Compute 
\begin{equation*}
   \overline{Z}_t(\cdot,\zeta_{S(t)})=\frac{\sum_{l=0}^L \gamma_lZ_t^{(l)}}{\sum_{l=0}^L \gamma_l},\quad t=1,\ldots,T. 
\end{equation*}
\end{algorithmic}
\end{algorithm}

\begin{lemma}\label{lm:MD-saddle}
    Under Assumptions \ref{assu:convex_saddle} and \ref{assumption:measurable-saddle}, consider the update in Algorithm \ref{alg:hypo-ms-s}. Then, $\overline{Z}_t,Z_t^{(l)}\in \mc P_t(\mc X_t^{o}\times \mc Y_t^o)$ for all $t,l$. In addition, for $\widetilde{\mb z}^{(L)}:=\frac{\sum_{l=0}^L \gamma_l\mb z_t^{(l)}}{\sum_{l=0}^L \gamma_l}$, we have for any $\mb z_t:\Omega\to \mc Z_t$ measurable w.r.t. $\mc F_t$,
   \begin{align*}
    &\quad \E[\phi(\widetilde{\mb x}^{(L)},\mb y,\bx) -\phi(\mb x,\widetilde{\mb y}^{(L)},\bx) ]\\
    &\leq \frac{2\E[D_{w}(\mb z,\mb z^{(0)})]+
   \sum_{l=0}^L \frac{\gamma_l^2}{2}(\E[\|\mb g^{(l)}\|_*^2 +\|\Delta^{(l)}\|_*^2] )-\gamma_l\E[\langle \mb z^{(l)} - \widehat{\mb z}^{(l)},\Delta^{(l)}\rangle]}{\sum_{l=0}^L \gamma_l},
\end{align*}
   where $\Delta^{(l)}_t = \mb g^{(l)}_t-\E[ \frac{\tilde{\partial}}{\partial z_t}\phi(\mb z^{(l)},\bx)|\mc F_t]$. The sequence $\widehat{\mb z}_t^{(0)} =\mb z_t^{(0)} $, for $l=0,1,\ldots$, $w\in \Omega$, $t=1,2,\ldots,T$
    \begin{displaymath}
            \widehat{\mb z}_t^{(l+1)}(w) = \argmin_{z_t'\in {\mc Z}_t} \langle -\gamma_l\Delta_t^{(l)}(w),z_t'\rangle + D_{w_t}(z_t',\widehat{\mb z}_t^{(l)}(w)). 
        \end{displaymath}

\end{lemma}

\begin{proof}[Proof of Lemma \ref{lm:MD-saddle}] $Z_t^{(l)}\in \mc P_t(\mc X_t^o \times \mc Y_t^o)$ follows from the same argument as in the proof of Lemma \ref{lm:MD1}. In addition, following the same argument for \eqref{eq:md-cum}, we get for any $\mb z_t:\Omega\to \mc Z_t$ measurable w.r.t. $\mc F_t$, 
\begin{displaymath}
    \sum_{l=0}^L\gamma_l\E[\langle \mb z_{t}^{(l)} - \mb z_t,\frac{\tilde{\partial}}{\partial z_t}\phi(\mb z^{(l)},\bx)\rangle]\leq \E[D_{w_t}(\mb z_t,\mb z^{(0)}_t)]+
   \sum_{l=0}^L \frac{\gamma_l^2}{2}\E[\|G^{(l)}_t\|_*^2] -\gamma_l\E[\langle \mb z_{t}^{(l)} - \mb z_t,\Delta_t^{(l)}\rangle].
\end{displaymath}
Applying Lemma \ref{lm:MD-prop-lemma} to the sequence $\widehat{\mb z}_t^{(l)}$, we get 
\begin{displaymath}
    -\gamma_l \E[\langle\widehat{\mb z}_{t}^{(l)} - \mb z_t,\Delta_t^{(l)}\rangle]\leq  \frac{\gamma_l^2}{2}\E[\|\Delta_t^{(l)}\|_*^2] + \E[D_{w_t}(\mb z_t,\widehat{\mb z}^{(l)}_t) - D_{w_t}(\mb z_t,\widehat{\mb z}^{(l+1)}_t)].
\end{displaymath}
Thus we get
\begin{align}\label{eq:md-saddle-cum}
    &\quad \sum_{l=0}^L\gamma_l\E[\langle \mb z_{t}^{(l)} - \mb z_t,\frac{\tilde{\partial}}{\partial z_t}\phi(\mb z^{(l)},\bx)\rangle]\nonumber\\
    &\leq \E[2D_{w_t}(\mb z_t,\mb z^{(0)}_t)+
   \sum_{l=0}^L \frac{\gamma_l^2}{2}(\|\mb g^{(l)}_t\|_*^2 +\|\Delta_t^{(l)}\|_*^2 )-\gamma_l\langle \mb z_{t}^{(l)} - \widehat{\mb z}_t^{(l)},\Delta_t^{(l)}\rangle].
\end{align}
     
Since $\phi(x_{1:T},y_{1:T},\xi)$ is convex in $x_{1:T}$ and concave in $y_{1:T}$, we have for any $\mb z_t:\Omega\to \R^{m_t+n_t}$ for $t=1,\ldots,T$,
   
    \begin{align}\label{eq:gap_ub}
        &\quad \E[\phi(\widetilde{\mb x}^{(L)},\mb y,\bx) -\phi(\mb x,\widetilde{\mb y}^{(L)},\bx) ]\nonumber\\
        &\leq \E[\left(\sum_{l=0}^L \gamma_l\right)^{-1}\left(\sum_{l=0}^L\gamma_l  \langle 
        \mb z^{(l)} - \mb z,\frac{\tilde{\partial}}{\partial z}\phi(\mb z^{(l)},\bx)\rangle\right)]. 
    \end{align}

Finally, summing \eqref{eq:md-saddle-cum} over $t$ and together with \eqref{eq:gap_ub} give the result.
\end{proof}

\subsection{Accelerated mirror descent for smooth \eqref{eq:obj_unconstrained-online}}\label{sec:acc_md}
With additional assumptions such as smoothness and/or strong convexity of the objective functions, classical convex optimization can be accelerated, with rates of convergence better than $1/\sqrt{l}$. More precisely, assume that $\R^{n}$ is equipped with the Euclidean inner product $\langle\cdot,\cdot\rangle$ and the \textit{induced norm} $\|\cdot\|$. For the problem \eqref{eq:obj_unconstrained-online}, consider the following updates.

\begin{algorithm}
\caption{Hypothetical accelerated mirror descent update for \eqref{eq:obj_unconstrained-online}}\label{alg:hypo-ms-acc}
\begin{algorithmic}
\Require Realizations of the random variables $\{\bz_k(w_{\internal})= \zeta_k,~k\in \mc K\}$; initialization $X^{(0)}_t\in \mc P_t(\mc X^o_t)$ for all $t$; parameters $\gamma\geq 0$, $\mu \geq0$, $L_2>0$, $\theta\in [0,1]$; oracles $\overline{G}^{(l)}_t:\Xi_{1:t}\times \Gamma_{S'(t,l)}\to \R^{n_t}$ for all $t$ and $l=0,\ldots,L$. 
\Ensure $\overline{X}_t(\cdot,\zeta_{S(t)}) = X_{t+}^{(L)}\in \mc P_t(\mc X_t)$ for all $t$.
\State Set $\alpha_0 = 1$, $A_0 = 1$. 
\For{$l=0,1,\ldots,L$}
\State Find $\alpha_{l+1}$ satisfying $(1+\gamma)L_2 + (1-\theta)\mu A_l = \frac{(1+\gamma)L_2\alpha_{l+1}^2}{A_{l}+\alpha_{l+1}}$.
\State Set $A_{l+1} = A_l + \alpha_{l+1}$ and $\tau_l = \alpha_{l+1}/A_{l+1}$. 
\For{$\xi\in \Xi$, $t=1,\ldots,T$}
\State Set $G_t^{(l)}(\xi_{1:t}) = \overline{G}_t^{(l)}(\xi_{1:t},\zeta_{S'(t,l)})$.
\State Compute $\overline{X}_t^{(l+1)}(\xi_{1:t},\zeta_{S'(t,l)}) = X_{t}^{(l+1)}(\xi_{1:t})$ using
\begin{align}\label{eq:update_acc_inexact}
    & X_{t+}^{(l)}(\xi_{1:t}) = \argmin_{x_t\in \mc X_t}\langle G_{t}^{(l)}(\xi_{1:t}),x_t-X_{t}^{(l)}(\xi_{1:t})\rangle + \frac{(1+\gamma)L_2}{2}\|x_t-X_{t}^{(l)}(\xi_{1:t})\|^2,\nonumber\\
    & X_{t-}^{(l)}(\xi_{1:t}) = \argmin_{x_t\in \mc X_t} (1+\gamma) L_2D_{v_t}(x_t,X_{t}^{(0)}(\xi_{1:t})) \nonumber\\
    &\qquad\qquad\qquad + \sum_{l'=0}^{l} \alpha_{l'}(\langle G_{t}^{(l')}(\xi_{1:t}),x_t-X_{t}^{(l')}(\xi_{1:t})\rangle + \frac{(1-\theta)\mu}{2}\|x_t-X_{t}^{(l')}(\xi_{1:t})\|^2),\nonumber\\
    &X_{t}^{(l+1)}(\xi_{1:t}) = \tau_l X_{t-}^{(l)}(\xi_{1:t}) + (1-\tau_l)X_{t+}^{(l)}(\xi_{1:t}).
\end{align}
\EndFor
\EndFor
\end{algorithmic}
\end{algorithm}

\begin{lemma}\label{lm:acc-inexact-multi-stage}
Under Assumptions \ref{assu:convex} and \ref{assumption:measurable}, and assume that $f(\cdot,\xi)$ satisfies the following condition for all $\xi\in \Xi$
\begin{equation}\label{eq:condition-convex-smooth-f}
    \frac{\mu}{2}\|x - x'\|^2\leq f(x',\xi) - f(x,\xi) - \langle \nabla f(x,\xi),x'-x\rangle \leq \frac{L_2}{2}\|x - x'\|^2,\quad \forall x,x'\in  \mc X,
\end{equation}
 then $X_t^{(l)}, X_{t\pm}^{(l)}\in \mc P_t(\mc X_t^o)$ for all $t,l$. In addition, with $\gamma = 1$ and $\theta = 1/2$, the following holds
\begin{displaymath}
\gapu(X_{+}^{(l)}) \leq \E[A_l^{-1}(2L_2D_v(\mb x^*,\mb x^{(0)}) + \sum_{l'=0}^{l}\overline{\Delta}^{(l')})],
\end{displaymath}
where for $t = 1,\ldots,T$ and $l = 0,1\ldots,L$, $\Delta^{(l)}_t = \mb g_t^{(l)} - \E[\frac{\partial}{\partial x_t} f(\mb x^{(l)},\bx)|\mc F_t]$, and denoting $A_{-1} = 0$,
\begin{displaymath}
    \overline{\Delta}^{(l')} =\begin{cases}A_{l'}\frac{\|\Delta^{(l')}\|^2}{2 L_2} +  \langle \Delta^{(l')},\alpha_{l'}(\mb x^*-\mb x^{(l')}) + A_{l'-1}(\mb x_+^{(l'-1)}-\mb x^{(l')})\rangle&\mu=0\\
(\frac{\alpha_{l'}}{\mu} + \frac{A_{l'}}{2\mu}+\frac{A_{l'}}{2L_2})\|\Delta^{(l')}\|^2 &\mu >0
\end{cases}.
\end{displaymath}

\end{lemma}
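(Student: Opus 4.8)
The plan is to transcribe the classical analysis of the optimal first‑order method for smooth, (strongly) convex minimization (the Nesterov/Lan accelerated‑gradient template) into the lifted space $\R^{n\times K}$, carrying the gradient inexactness along; below I suppress the subscript $1:T$ on combined iterates. The lift costs essentially nothing. By induction on $l$, starting from $X^{(0)}\in\prod_{t=1}^{T}(\mc G_t\cap\mc X_t^o)$, every iterate $X^{(l)}$, $\overline{X}^{(l)}$, $\underline{X}^{(l)}$ produced by \eqref{eq:update_acc_inexact} stays in $\overline{\mc X}$: the first two subproblems minimize over $\overline{\mc X}$, and the third is a convex combination of points of $\overline{\mc X}$; in particular each iterate, including $\overline{X}^{(l-1)}$ (which appears in $\overline{\Delta}^{(l)}$ when $\mu=0$), is componentwise $\mc G_t$‑measurable. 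Hence in each inner product $\langle G^{(l)}_t,v_t\rangle$ appearing below the other argument $v_t$ is a difference of $\mc G_t$‑measurable iterates, so $\langle G^{(l)}_t,v_t\rangle=\langle G^{(l)}_t\proj_{t},v_t\rangle$, and likewise $\langle\tfrac{\partial}{\partial x_t}f(X^{(l)}),v_t\rangle=\langle\tfrac{\partial}{\partial x_t}f(X^{(l)})\proj_{t},v_t\rangle$ by Lemma \ref{lm:aux-lemma}; by Lemma \ref{lm:main-MD} the same replacement inside the Bregman projections is harmless. After this reduction the method is exactly accelerated gradient descent applied to $F(X)=\E[f(X(w),w)]$, whose constants $\mu,L_2$ for the Euclidean‑induced norm on $\R^{n\times K}$ are inherited from \eqref{eq:condition-convex-smooth-f} by Jensen, run with the effective gradient $G^{(l)}=\nabla F(X^{(l)})+\Delta^{(l)}$, where $\Delta^{(l)}$ stacks the $\Delta^{(l)}_t=(G^{(l)}_t-\tfrac{\partial}{\partial x_t}f(X^{(l)}))\proj_{t}$.

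Next I would record two per‑iteration estimates; write $\ell_F(\cdot\,;z):=F(z)+\langle\nabla F(z),\cdot-z\rangle$. Since the $\overline{X}^{(l)}$ subproblem has a $(1+\gamma)L_2$‑strongly convex Euclidean objective, the three‑point inequality for its minimizer, together with the smoothness bound $F(\overline{X}^{(l)})\le\ell_F(\overline{X}^{(l)};X^{(l)})+\tfrac{L_2}{2}\|\overline{X}^{(l)}-X^{(l)}\|^2$ and $G^{(l)}=\nabla F(X^{(l)})+\Delta^{(l)}$, gives for every $X\in\overline{\mc X}$
\begin{displaymath}
F(\overline{X}^{(l)})\le\ell_F(X;X^{(l)})+\tfrac{(1+\gamma)L_2}{2}\big(\|X-X^{(l)}\|^2-\|X-\overline{X}^{(l)}\|^2\big)-\tfrac{\gamma L_2}{2}\|\overline{X}^{(l)}-X^{(l)}\|^2+\langle\Delta^{(l)},X-\overline{X}^{(l)}\rangle .
\end{displaymath}
The $\underline{X}^{(l)}$ subproblem is an aggregated (dual‑averaging) prox step, so the standard prox‑inequality (optimality of $\underline{X}^{(l)}$ plus $1$‑strong convexity of $D_V$) bounds $\sum_{l'=0}^{l}\alpha_{l'}\big(\ell_F(\underline{X}^{(l)};X^{(l')})+\tfrac{(1-\theta)\mu}{2}\|\underline{X}^{(l)}-X^{(l')}\|^2\big)+(1+\gamma)L_2D_V(X,\underline{X}^{(l)})$ by the same expression with $\underline{X}^{(l)}$ replaced by an arbitrary $X\in\overline{\mc X}$, plus $(1+\gamma)L_2D_V(X,X^{(0)})$, plus the inexactness contribution $\sum_{l'=0}^{l}\alpha_{l'}\langle\Delta^{(l')},X-\underline{X}^{(l)}\rangle$, and minus a nonnegative $D_V(\underline{X}^{(l)},X^{(0)})$ term that we discard (again after the $\proj_t$ reduction).

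The telescoping is the heart of the matter and follows the classical argument. Plugging $X=X^{(l+1)}$ into the first estimate and $X=X^*$ into the second, using convexity of $\ell_F(\cdot\,;X^{(l)})$ with $X^{(l+1)}=\tau_l\underline{X}^{(l)}+(1-\tau_l)\overline{X}^{(l)}$ and $X^{(l)}=\tau_{l-1}\underline{X}^{(l-1)}+(1-\tau_{l-1})\overline{X}^{(l-1)}$, the strong‑convexity lower bound $\ell_F(X^*;X^{(l')})\le F^*-\tfrac{\mu}{2}\|X^*-X^{(l')}\|^2$, and the defining relations $\alpha_0=1$, $(1+\gamma)L_2+(1-\theta)\mu A_l=(1+\gamma)L_2\alpha_{l+1}^2/A_{l+1}$ (which are precisely what makes the $\|\cdot-X^{(l)}\|^2$ and $\|\cdot-\overline{X}^{(l)}\|^2$ terms of consecutive iterations cancel), one proves by induction on $l$ an estimate of the form $A_l\big(F(\overline{X}^{(l)})-F^*\big)+(1+\gamma)L_2D_V(X^*,\underline{X}^{(l)})\le(1+\gamma)L_2D_V(X^*,X^{(0)})+\sum_{l'=0}^{l}\overline{\Delta}^{(l')}$; with $\gamma=1,\theta=\tfrac12$, dropping the nonnegative Bregman term yields the claim. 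The inexactness enters only through the $\langle\Delta^{(l')},\cdot\rangle$ terms above, carrying weights $\alpha_{l'}$ or $A_{l'}$ after telescoping. When $\mu=0$ the sole available slack is $\tfrac{\gamma L_2}{2}\|\overline{X}^{(l')}-X^{(l')}\|^2$ (weight $A_{l'}$, hence $\tfrac{A_{l'}L_2}{2}\|\overline{X}^{(l')}-X^{(l')}\|^2$ with $\gamma=1$), and Young's inequality absorbs $A_{l'}\langle\Delta^{(l')},\overline{X}^{(l')}-X^{(l')}\rangle$ into it, leaving the residual $\tfrac{A_{l'}}{2L_2}\|\Delta^{(l')}\|^2$; the remaining inner products, against $\alpha_{l'}(X^*-X^{(l')})$ and $A_{l'-1}(\overline{X}^{(l'-1)}-X^{(l')})$, are kept as is (they are controlled in expectation and with high probability in the stochastic‑approximation corollary), which is exactly the stated $\overline{\Delta}^{(l')}$. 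When $\mu>0$ the extra $\tfrac{\mu}{2}\|\cdot-X^{(l')}\|^2$ slacks — the leftover $\tfrac{\theta\mu}{2}\|X^*-X^{(l')}\|^2$ after combining the strong‑convexity lower bound with the $(1-\theta)\mu$ term of the $\underline{X}$ subproblem, plus the prox terms — let Young's inequality absorb every linear‑in‑$\Delta$ term, giving the all‑quadratic coefficient $\tfrac{\alpha_{l'}}{\mu}+\tfrac{A_{l'}}{2\mu}+\tfrac{A_{l'}}{2L_2}$.

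The main obstacle is not conceptual but bookkeeping: arranging the $\langle\Delta^{(l')},\cdot\rangle$ residuals so they match exactly the stated $\overline{\Delta}^{(l')}$ — in the $\mu=0$ case, bounding only the inner product against $\overline{X}^{(l')}-X^{(l')}$ (for which a quadratic slack exists) while carrying those against $X^*-X^{(l')}$ and $\overline{X}^{(l'-1)}-X^{(l')}$ — and correctly propagating the coefficients $\alpha_{l'},A_{l'},A_{l'-1}$ through the convex combination $X^{(l+1)}=\tau_l\underline{X}^{(l)}+(1-\tau_l)\overline{X}^{(l)}$ and through the two successive applications of the $\overline{X}$‑estimate needed to align $\ell_F(\cdot\,;X^{(l)})$ with $X^{(l)}=\tau_{l-1}\underline{X}^{(l-1)}+(1-\tau_{l-1})\overline{X}^{(l-1)}$. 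A secondary point is that the Lemma \ref{lm:main-MD} reduction must be applied to all three iterate sequences at once, which is why the induction has to maintain $\mc G_t$‑measurability of $X^{(l)}$, $\overline{X}^{(l)}$ and $\underline{X}^{(l)}$ together.
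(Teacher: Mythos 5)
Your proposal is correct and follows essentially the same route as the paper: the paper's Appendix B.2 carries out exactly this Devolder--Glineur--Nesterov-style inexact accelerated-gradient analysis in the lifted space, with the measurability/$\proj_t$ reduction via Lemma \ref{lm:main-MD}, the induction on an estimating sequence $\psi_l$ (equivalent to your potential $A_l(F(\overline{X}^{(l)})-F^*)+(1+\gamma)L_2D_V(X^*,\underline{X}^{(l)})$), and the same selective use of Young's inequality --- absorbing only the $\langle\Delta^{(l')},\overline{X}^{(l')}-X^{(l')}\rangle$ term into the $\tfrac{\gamma L_2}{2}$ slack when $\mu=0$ and all linear terms into the $\mu$-quadratic slacks when $\mu>0$ --- yielding precisely the stated $\overline{\Delta}^{(l')}$ with weights $\alpha_{l'}$, $A_{l'}$, and $A_{l'-1}=(1-\tau_{l'-1})A_{l'}$.
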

Notice that for $\mu = 0$, $\theta$ does not affect the trajectory of \eqref{eq:update_acc_inexact}, and so the bound in Lemma \ref{lm:acc-inexact-multi-stage} holds under all $\theta$. The proof of Lemma \ref{lm:acc-inexact-multi-stage} is deferred to Appendix \ref{appendix:acc}.

\textbf{Remark.} Our updates in Algorithm \ref{alg:hypo-ms-acc} are slightly different from the accelerated MDSA proposed in \cite{lan_optimal_2012} (for instance, we do not use restart when $\mu> 0$). However, the $O(1/l^2)$ and the linear convergence for $\mu= 0$ and $\mu>0$ respectively, as will be shown in Theorem \ref{thm:acc-mu}, match the performance in \cite{lan_optimal_2012}.

\section{Mirror descent stochastic approximation}\label{sec:SA}

To (conceptually) implement the mirror descent updates in Section \ref{sec:pathwise}, one still needs to decide what $\overline{G}_t^{(l)}$ is. Two factors should be taken into consideration. 

\textbf{Convergence guarantee.} First, Lemmas \ref{lm:MD1}, \ref{lm:MD-saddle}, and \ref{lm:acc-inexact-multi-stage} suggest that the suboptimalities (in terms of the function values or duality gaps) of the mirror descent updates depend on $\Delta_t^{(l)}$. For \eqref{eq:obj_unconstrained-online} and $2\leq t\leq T-1$, recall that $\mc F_t = \sigma(\bx_{1:t})$ and
\begin{equation}\label{eq:grad-compute}
    \frac{\partial }{\partial x_t}f(x_{1:T},\xi_{1:T}) = \frac{\partial }{\partial x_t}f_{t}(x_{t-1},x_t,\xi_t) + \frac{\partial }{\partial x_t}f_{t+1}(x_{t},x_{t+1},\xi_{t+1}),
\end{equation}
and so we have
\begin{align*}
    \Delta^{(l)}_t&= \mb g^{(l)}_t-\E[\frac{\partial}{\partial x_t}f(\mb x^{(l)},\bx)|\bx_{1:t}] \\
    &= G_t^{(l)}(\bx_{1:t}) - \frac{\partial}{\partial x_t}f_t(X_{t-1}^{(l)}(\bx_{1:(t-1)}),X_{t}^{(l)}(\bx_{1:t}),\bx_{1:t})\\
    &\quad -\E[\frac{\partial}{\partial x_t}f_{t+1}(X_{t}^{(l)}(\bx_{1:t}),X_{t+1}^{(l)}(\bx_{1:(t+1)}),\bx_{1:(t+1)})|\bx_{1:t}].
\end{align*}
Thus, one might hope to design $\overline{G}_t^{(l)}$ in such a way that $\Delta_t^{(l)}\approx \mb 0$, or, $\mb g_t^{(l)}\approx\E[\frac{\partial}{\partial x_t}f(\mb x^{(l)},\bx)|\bx_{1:t}]$ approximates the \textit{conditional gradient}. 

\textbf{Inefficiency in computing (conditional) expectation.} However, the term $\E[\frac{\partial}{\partial x_t}f_{t+1}|\bx_{1:t}]$ in this conditional gradient is of the \textit{conditional expectation} form. Thus, computing $\E[\frac{\partial}{\partial x_t}f|\bx_{1:t}]$ requires integration (using the distribution of $\bx_{t+1}$ conditioned $\bx_{1:t}$), which could be inefficient \cite{NemirovskiRobustSA2009}. 

\textbf{Our solution: stochastic approximation.} Taking inspiration from the stochastic approximation type of algorithms for the classical stochastic programming problems, we resort to randomness to approximate these conditional expectations. On a high level, $\overline{G}_t^{(l)}$ will use the random variables $\bz_{S'(t,l)}$ to sample $\widehat{\bx}_{t+1}$ under (conditional) distribution of $\bx_{t+1}$ conditioned on $\bx_{1:t} = \xi_{1:t}$, and use $\frac{\partial}{\partial x_t}f_{t+1}(X_{t}^{(l)}(\xi_{1:t}),\overline{X}_{t+1}^{(l)}(\xi_{1:t},\widehat{\bx}_{t+1}),\xi_{1:t},\widehat{\bx}_{t+1})$ as a \textit{stochastic approximation} of $\E[\frac{\partial}{\partial x_t}f_{t+1}|\bx_{1:t} = \xi_{1:t}]$. Since this construction resembles the classical stochastic gradient, we call it \textit{stochastic conditional gradient}.


Below, in Section \ref{sec:def-grad-cvx}, we formally introduce this product space setup and the stochastic conditional gradients. Then, in Section \ref{sec:MDSA}, we provide the performance of the mirror descent updates above, with these stochastic conditional gradients.

\subsection{Stochastic conditional gradient}\label{sec:def-grad-cvx}
First, recall that given two probability measures $P,Q$ defined on some measurable space $(\Omega',\mc F')$, the total variation distance between them is $TV(P,Q):=\sup_{A\in \mc  F'} |P(A)-Q(A)|$. We define the following conditional scenario sampler. 
\begin{definition}\label{def:conditional-scenario-sampler}
    Let $\widehat{\bx}_t:\Xi_{1:(t-1)}\times [0,1]\to \Xi_t$ be a measurable function. We say that $\widehat{\bx}_t$ is a $\beta_t$-biased \textit{conditional scenario sampler} if for any $\xi_{1:(t-1)}\in \Xi_{1:(t-1)}$, given a random variable uniform in $[0,1]$, i.e. $U\sim \text{Uniform}([0,1])$, denoting the distribution of $\widehat{\bx}_t(\xi_{1:(t-1)},U)$ as $\mu(\xi_{1:(t-1)})$, then $TV(\mu(\xi_{1:(t-1)}),\p(\bx_t|\bx_{1:(t-1)} = \xi_{1:(t-1)}))\leq \beta_t$. 
\end{definition}

That is, $\widehat{\bx}_t(\xi_{1:t},U)$ is a random variable whose distribution is (approximately) the distribution of $\bx_{t}$ conditioning on $\bx_{1:(t-1)} = \xi_{1:(t-1)}$. In case when all $\bx_{t'}$ have finite possible realizations (i.e., $|\Xi_{t'}|<\infty$), the policy space can be represented using a tree (e.g. the right figure in Figure \ref{fig:tree}), then $\widehat{\bx}_t(\xi_{1:t},U)$ is a random child node of the node corresponding to $\xi_{1:t}$. 

Above, we use a uniform random variable in $[0,1]$ as the \textit{random seed}. This choice is arbitrary, and one can replace it with any measurable space $(\Omega',\mc F')$ as long as $\widehat{\bx}_t:\Xi_{1:(t-1)}\times \Omega'\to \Xi_t$ is measurable, there exists a random variable $U':\Omega_{\internal}\to \Omega'$, and the bias will be for the distribution of $\widehat{\bx}_t(\xi_{1:(t-1)},U')$.



Next, we discuss how to construct stochastic conditional gradients using the conditional scenario sampler. From \eqref{eq:grad-compute}, we see that $\frac{\partial}{\partial x_t}f$ depends only on a subset of $x_{1:T}$ and $\xi_{1:T}$. That is, for $t=2,\ldots,T-1$, we have 
\begin{align*} 
    \frac{\partial }{\partial x_t}f(x_{1:T},\xi_{1:T}) &=\frac{\partial }{\partial x_t}f(x_{(t-1):(t+1)},\xi_{t:(t+1)})\\
    &=\frac{\partial }{\partial x_t}f_{t}(x_{t-1},x_t,\xi_t) + \frac{\partial }{\partial x_t}f_{t+1}(x_{t},x_{t+1},\xi_{t+1}). 
\end{align*}
For $t=T$, we have $\frac{\partial }{\partial x_T}f(x_{1:T},\xi_{1:T}) = \frac{\partial }{\partial x_T}f(x_{(T-1):T},\xi_{T})=\frac{\partial }{\partial x_T}f_{T}(x_{T-1},x_{T},\xi_{T})$. For $t=1$, we have $\frac{\partial }{\partial x_1}f(x_{1:T},\xi_{1:T}) =\frac{\partial }{\partial x_1}f(x_{1:2},\xi_{1:2})
    =\frac{\partial }{\partial x_1}f_{1}(x_1,\xi_1) + \frac{\partial }{\partial x_1}f_{2}(x_{1},x_{2},\xi_{2})$. 

\begin{lemma}\label{lm:prop-oracle}
For $2\leq t\leq T-1$, let $\widehat{\bx}_{t+1}$ be a $\beta_{t+1}$-biased conditional scenario sampler as defined in Definition \ref{def:conditional-scenario-sampler}. Under Assumption \ref{assu:convex}, with $X_s\in \mc P_t(\mc X_s)$, let $G_t(\cdot,\cdot;X_{(t-1):(t+1)}):\Xi_{1:t}\times [0,1]\to \R^{n_t}$ be defined as 
\begin{equation}\label{eq:choice-of-G}
    G_t(\xi_{1:t},u;X_{(t-1):(t+1)}) = \frac{\partial}{\partial x_t} f(x_{t-1},x_t,x_{t+1},\xi_t,\widehat{\bx}_{t+1}(\xi_{1:t},u)), 
\end{equation}
where $x_{t-1} = X_{t-1}(\xi_{1:(t-1)})$, $x_t = X_t(\xi_{1:t})$, $x_{t+1} = X_{t+1}(\xi_{1:t},\widehat{\bx}_{t+1}(\xi_{1:t},u))$. Then $G_t(\cdot,\cdot;X_{(t-1):(t+1)})$ is measurable (for fixed $X_{(t-1):(t+1)}$). 

Assume that $\|\frac{\partial}{\partial x_t}f\|_*\leq \lips_t$ for all $x\in \mc X,\xi\in \Xi$, and take $b_t = 2\beta_{t+1} \lips_t$. Let $U\sim \text{Uniform}([0,1])$, then there exists $\sigma_t\leq 2\lips_t$, such that for any $\xi\in \Xi$, 
\begin{subequations}\label{eq:grad-conditions}
\begin{align}
    & \|\E[G_t(\xi_{1:t},U)] - G^*_t(\xi_{1:t})\|_*\leq b_t,\label{eq:grad-unbias}\\
    & \E[\|G_t(\xi_{1:t},U) -\E\left[G_t(\xi_{1:t},U')\right] \|^2_*]\leq \sigma^2_t,\label{eq:grad-var} 
\end{align}
\end{subequations}
where we abbreviate $G_t(\xi_{1:t},u;X_{(t-1):(t+1)}) = G_t(\xi_{1:t},u)$, and define $G^*_t(\xi_{1:t}) = \E[\frac{\partial}{\partial x_t} f(X_{t-1}(\xi_{1:(t-1)}),X_t(\xi_{1:t}),X_{t+1}(\xi_{1:t},\bx_{t+1}),\xi_t,\bx_{t+1})|\bx_{1:t} = \xi_{1:t}]$. 
\end{lemma}

\begin{proof}[Proof of Lemma \ref{lm:prop-oracle}]
The measurability of $G_t$ follows from the measurability of $\frac{\partial}{\partial x_t} f$, $X_{s}$ for $t-1\leq s\leq t+1$, and $\widehat{\bx}_{t+1}$.

    For \eqref{eq:grad-unbias}, notice that the only difference between $G_t^{*}(\xi_{1:t})$ and $\E[G_t(\xi_{1:t},U)]$ is the distribution over $\bx_{t+1}$, which differs by $\beta_{t+1}$ in TV distance. Using $\|\frac{\partial}{\partial x_t}f\|_*\leq \lips_t$ gives the result. For \eqref{eq:grad-var}, we use $\|G_t\|_*\leq \lips_t$. 
\end{proof}

The above definition can be extended to $t=1,T$ and to the problem \eqref{eq:minmax-online} (for $\frac{\tilde{\partial} \phi}{\partial z_t}$ as defined in \eqref{eq:def-phi-grad}) in a similar fashion. 

\subsection{Mirror descent stochastic approximation}\label{sec:MDSA}
The construction of stochastic conditional gradients in Lemma \ref{lm:prop-oracle} can be used to define $\overline{G}_t^{(l)}$ in Algorithms \ref{alg:hypo-ms-u}, \ref{alg:hypo-ms-s}, and \ref{alg:hypo-ms-acc}.

The overall idea is to associate each $(t,l)$ with a random variable $\bz_{t,l}$ following $\text{Uniform}([0,1])$, independently, and use it as the random seed for the sampled $\widehat{\bx}_{t+1}$ in $\overline{G}_t^{(l)}$. We point out the convergence/performance of our algorithms does not require a separate random seed for each $\xi$, and thus $\bz_{t,l}$ can be shared among different $\xi_{1:t}$.

\begin{lemma}\label{lm:MDSA-1}
Let $\widehat{\bx}_t:\Xi_{1:(t-1)}\times [0,1]\to \Xi_t$ be $\beta_t$-biased conditional scenario samplers for all $t$. Assume that $\mc K = \{1,\ldots,T\} \times \{0,\ldots,L\}$, $\bz_{t,l}:\Omega_{\internal}\to [0,1]$ such that $\bz_{t,l}\sim_{i.i.d.} \text{Uniform}([0,1])$. For Algorithm \ref{alg:hypo-ms-u}, the following are valid choices of $S(t),S'(t,l)$ and $\overline{G}_t^{(l)}$:
\begin{enumerate}
    \item $S'(t,-1) =\emptyset$ for all $t$; $S'(t,l) = \cup_{s=\max(1,t-1)}^{\min(t+1,T)} S'(s,l-1)\cup \{(t,l)\}$ for all $t$ and $0\leq l\leq L$; moreover, $X_t^{(l)}(\xi_{1:t}) = \overline{X}^{(l)}_t(\xi_{1:t},\zeta_{S'(t,l-1)})$ depends only on $\bz_k$ for $k\in S'(t,l-1)$;
    \item $\overline{G}_t^{(l)}(\xi_{1:t},\zeta_{S'(t,l)})=G_t(\xi_{1:t},\zeta_{t,l};X_{(t-1):(t+1)}^{(l)})$ using \ref{eq:choice-of-G}, and depends only on  $\bz_k$ for $k\in S'(t,l)$; thus, $G_t^{(l)}$ depends only on $\bz_k$ for $k\in S'(t,l)$; similarly for $t=1,T$;
    \item $S(t) = S'(t,L-1)$; moreover, $S'(t,l) = \{(t',l')\in \mc K,~|t-t'|\leq l-l'\}$ for all $(t,l)\in \mc K$. 
\end{enumerate}
\end{lemma}

\begin{proof}[Proof of Lemma \ref{lm:MDSA-1}]
The first two claims can be proved by induction. The case $l=0$ certainly holds. Now suppose claim 1 and 2 holds for $l$. For $l+1$, by the update $X_t^{(l+1)}$ in \eqref{eq:MD1}, $X_t^{(l+1)}$ depends on $G_t^{(l)}$ and $X_t^{(l)}$ and thus on $S'(t,l)$ only. $G_t^{(l+1)}$ depends on $X_{(t-1):(t+1)}^{(l+1)}$ and thus on $\cup_{s=t-1}^{t+1}S'(s,l)=S'(t,l+1)$ only. The first two claims follow from induction. 

For the third claim, notice that $\overline{X}_t^{(L)}$ depends on $X_t^{(0:L)}$ and thus on $\cup_{l=0}^{L-1}S'(t,l)=S'(t,L-1)$ only. The claim on $S'(t,l)$ can be proved by induction. 
\end{proof}

In fact, $S'(t,l)$ consists of indices in the \textit{triangle} with vertices $(t,l),(t\pm l,0)$ (assuming $1\leq t-l\leq t+l\leq T$). See Figure \ref{fig:S-fig} for an illustration. Next, we state the performance of our Algorithm \ref{alg:hypo-ms-u} with this choice of $\bz$ as the random seeds.

\begin{figure}[h!]
    \centering
    \begin{tikzpicture}[
lightstyle/.style={draw,minimum size=15,font = \small}]
  \def\nrows{5}
  \def\ncols{9}
  \def\spacingrow{0.5}
  \def\spacingcol{1}
  \def\gap{0.25}

  \foreach \i in {1,...,\numexpr\ncols}
  {\pgfmathtruncatemacro{\label}{\i}
    \node [lightstyle]  (\i) at (\spacingcol*\i,-\spacingrow-\gap) {$t = \label$};
    \foreach \j in {0,...,\numexpr\nrows-1}
      \fill (\i*\spacingcol,\j*\spacingrow) circle (1.5pt);}
    \foreach \j in {0,...,\numexpr\nrows-1}{
    \pgfmathtruncatemacro{\label}{\j}
    \node [lightstyle]  (\j) at (-\gap,\spacingrow*\j) {$l = \label$};
    }
    \filldraw[fill=red!60, fill opacity=0.3, draw=red, thick] (4*\spacingcol,2*\spacingrow+\gap) -- (2*\spacingcol-2*\gap,-\gap/2) -- (6*\spacingcol+2*\gap,-\gap/2) -- cycle;
    \filldraw[fill=blue!60, fill opacity=0.3, draw=blue, thick] (8*\spacingcol,4*\spacingrow+\gap) -- (4*\spacingcol-2*\gap,-\gap/2) -- (9*\spacingcol+2*\gap,-\gap/2)--(9*\spacingcol+2*\gap,3*\spacingrow+\gap/2) -- cycle;
\end{tikzpicture}
    \caption{$S'(t,l) = \{(t',l')\in \mc K,~|t-t'|\leq l-l'\}$. Red region: $S'(4,2)$; blue region: $S'(8,4)$.}
    \label{fig:S-fig}
\end{figure}

\begin{theorem}\label{thm:MDSA-1}
Under Assumption \ref{assu:convex}, \ref{assumption:measurable}, and the conditions of Lemmas \ref{lm:prop-oracle} and \ref{lm:MDSA-1}, further assuming that $\|\frac{\partial}{\partial x_t} f\|_*\leq \lips_t$ for all $t$, $\E[D_v(\mb x^*,\mb x^{(0)})]\leq \tilde{D}^2$, and $\|x-x'\|\leq D$ for all $x,x'\in \mc X$. Denote $b^2 = \sum_{t=1}^T b_t^2$, and $\tilde{L}^2 = \sum_{t=1}^T \lips_t^2$. 

Recall that $\gapu(\overline{X})$ is a random variable on the space $(\Omega_{\internal},\mc F_{\internal})$ such that $\gapu(\overline{X})(w_{\internal}) = \gapu(\overline{X}^{(w_{\internal})})$ where $\overline{X}_t^{(w_{\internal})}(\xi_{1:t}) = \overline{X}_t(\xi_{1:t},\bz_{S(t)}(w_{\internal}))$ for all $\xi\in \Xi$. With $\gamma_l = \frac{\sqrt{2}\tilde{D}/\tilde{L}}{\sqrt{L+1}}$,
\begin{equation}\label{eq:convergence-in-exp}
\E_{\internal}[\gapu(\overline{X})] \leq \frac{\sqrt{2}\tilde{D}\tilde{L}}{\sqrt{L+1}}+bD.
\end{equation}

\end{theorem}
In Lemma \ref{lm:MD1}, we provide performance guarantees for $\overline{X}_t(\cdot,\xi_{S(t)})$ for each realization of $\bz = \zeta\in \Gamma$. Below, we use the randomness in $\bz$, together with Lemmas \ref{lm:prop-oracle} and \ref{lm:MDSA-1} to provide performance guarantees for Algorithm \ref{alg:hypo-ms-u} on the product space $\Omega\times\Omega_{\internal}$. 

\begin{proof}[Proof of Theorem \ref{thm:MDSA-1}]
By the Lipschitz assumption and our choice for $\overline{G}_t^{(l)}$, we have $\|\overline{G}_t^{(l)}\|_*\leq \lips_t$. Thus, from Lemma \ref{lm:MD1} 
we have for each $w_{\internal}\in \Omega$, the following holds for $\overline{X}_t^{(w_{\internal})}:= \overline{X}_t(\cdot,\bz_{S(t)}(w_{\internal}))$:
\begin{equation}\label{eq:main-pf-1}
        \gapu(\overline{X}^{(w_{\internal})}) \leq \frac{\widetilde{D}^2+ \frac{\tilde{L}^2}{2}\sum_{l=0}^L \gamma_l^2 -\sum_{l=0}^L\gamma_l \E[\langle \overline{\Delta}^{(l,w_{\internal})},\overline{\mb x}^{(l,w_{\internal})} - \mb x^* \rangle]}{\sum_{l=0}^L \gamma_l},
   \end{equation}
    where $\overline{\mb x}_t^{(l,w_{\internal})} (w)= \overline{X}_t^{(l)}(\bx_{1:t}(w),\bz_{S'(t,l-1)}(w_{\internal}))$, $\overline{\mb g}_t^{(l,w_{\internal})} (w)= \overline{G}_t^{(l)}(\bx_{1:t}(w),\bz_{S'(t,l)}(w_{\internal}))$, and $\overline{\Delta}^{(l,w_{\internal})}_t(w)=\overline{\mb g}^{(l,w_{\internal})}_t(w)-\E[\frac{\partial}{\partial x_t}f(\overline{\mb x}^{(l,w_{\internal})},\bx)|\bx_{1:t}](w)$.

    For convenience, for each $w\in \Omega$ and $w_{\internal}\in \Omega_{\internal}$, viewing $\overline{\mb g}_t^{(l)} (w)$ as a random variable in $\Omega_{\internal}$, due to Lemmas \ref{lm:prop-oracle} and \ref{lm:MDSA-1}: for any $w_{\internal}\in \Omega_{\internal},w\in \Omega$,
    \begin{equation}\label{eq:delta-prop1}
        \|\E_{\internal}[\overline{\Delta}^{(l)}_t(w)|\bz_{1:T,0:(l-1)}](w_{\internal})  \|_*\leq b_t
    \end{equation}

Thus, we have
\begin{align*}
    &\quad|\E_{\internal}[\langle \overline{\Delta}_{t}^{(l)}(w),\overline{\mb x}_{t}^{(l)}(w)-\mb x^*_{t}(w)\rangle|\bz_{1:T,0:(l-1)}]| \\
    &= |\langle \E_{\internal}[\overline{\Delta}_{t}^{(l)}(w)|\bz_{1:T,0:(l-1)}],\overline{\mb x}_{t}^{(l)}(w)-\mb x^*_{t}(w)\rangle|\\
    &\leq  \|\E_{\internal}[\overline{\Delta}_{t}^{(l)}(w)|\bz_{1:T,0:(l-1)}]\|_*\cdot \|\overline{\mb x}_{t}^{(l)}(w)-\mb x^*_{t}(w)\|\\
    &\leq b_t\cdot \|\overline{\mb x}_{t}^{(l)}(w)-\mb x^*_{t}(w)\|.
\end{align*}
Thus, the following holds for all $w_{\internal}\in \Omega_{\internal}$:
\begin{displaymath}
    |\E_{\internal}[\langle \overline{\Delta}^{(l)}(w),\overline{\mb x}^{(l)}(w)-\mb x^*(w)\rangle|\bz_{1:T,0:(l-1)}] |\leq \sum_{t=1}^Tb_t\cdot \|\overline{\mb x}_{t}^{(l)}(w)-\mb x^*_{t}(w)\| \leq bD. 
\end{displaymath}
Taking expectation of \eqref{eq:main-pf-1}, we get 
\begin{displaymath}
        \E_{\internal}[\gapu(\overline{X})] \leq \frac{\tilde{D}^2 + \frac{\tilde{L}^2}{2}\sum_{l=0}^L \gamma_l^2  }{\sum_{l=0}^L \gamma_l}+bD,
   \end{displaymath}
which proves \eqref{eq:convergence-in-exp}. 

\end{proof}

\textbf{Remark.} As a special case, suppose that for all $t$, the diameter of $\mc X_t$, $\E[D_{v_t}(\mb x_t^*,\mb x^{(0)})]$, the bias $b_t$, and $\lips_t$ are upper bounded by $D_0, \tilde{D}^2_0$, $b_0$, and $L_0$, respectively. Then we have ${D} \leq {D}_0\sqrt{T}$, $\tilde{D} \leq \tilde{D}_0\sqrt{T}$, ${b} \leq b_0\sqrt{T}$, and $\tilde{L} \leq  L_0\sqrt{T}$. With the suggested $\gamma_l = \frac{\sqrt{2}\tilde{D}_0/L_0}{\sqrt{L+1}}$,
\begin{displaymath}
    \E_{\internal}[\gapu(\overline{X})]\leq \sqrt{2}T\frac{\tilde{D}_0L_0}{\sqrt{L+1}}+b_0D_0T.
\end{displaymath}
Thus, to get $T\epsilon$ suboptimality (in expectation), one needs to ensure that $b_0D_0=O( \epsilon )$ and set $L = O(1/\epsilon^2)$, which is independent of the number of stages $T$. Moreover, when $\epsilon=\Omega(1)$, the suggested $\gamma_l$ does not depend on $T$, the total number of stages.

\begin{theorem}\label{thm:MDSA-2}

Under Assumption \ref{assu:convex_saddle} and \ref{assumption:measurable-saddle} and the conditions of Lemmas \ref{lm:prop-oracle} and \ref{lm:MDSA-1}, further assuming that $\|\frac{\tilde{\partial}}{\partial z_t} \phi\|_*\leq \lips_t$ for all $t$, $\E[D_v(\mb z^*,\mb z^{(0)})]\leq \tilde{D}^2$, and $\|z-z'\|\leq D$ for all $z,z'\in \mc Z$. Denote $b^2 = \sum_{t=1}^T b_t^2$ and $\tilde{L}^2 = \sum_{t=1}^T \lips_t^2$. 

With $\gamma_l = \frac{2\tilde{D}/\tilde{L}}{\sqrt{5(L+1)}}$,
\begin{equation}\label{eq:convergence-in-exp-saddle}
\E_{\internal}[\gaps(\overline{Z})] \leq \frac{2\sqrt{5}\tilde{D}\tilde{L}}{\sqrt{L+1}}+bD.
\end{equation}
\end{theorem}

The proof of Theorem \ref{thm:MDSA-2} is similar to the proof of Theorem \ref{thm:MDSA-1} and uses Lemma \ref{lm:MD-saddle}. In addition, one can show convergence with high probability following similar arguments as in \cite{NemirovskiRobustSA2009}. We omit these proofs due to space constraints. 

From above theorems, we see that the suboptimalities of the proposed MDSA algorithms only suffer an additive term $bD$ due to the bias in the sampling distributions of $\widehat{\bx}_t$, thereby are robust. This could be useful in settings where exact sampling is difficult, or the distribution of $\bx_{1:T}$ is not know exactly, e.g. is learned from data.

For the accelerated mirror descent \eqref{eq:update_acc_inexact}, a similar argument as above applied to Lemma \ref{lm:acc-inexact-multi-stage} gives the following results.
\begin{theorem}\label{thm:acc-mu}
    Assume that Assumptions \ref{assu:convex} and \ref{assumption:measurable} and the conditions in Lemma \ref{lm:acc-inexact-multi-stage} hold. Further assume that $\|\frac{\partial}{\partial x}f(x,\xi)\|_*\leq \tilde{L}$ for all $x\in \mc X,\xi \in \Xi$, $\E[D_{v}(\mb x^*,\mb x^{(0)})]\leq \tilde{D}^2$, and $\overline{G}_t^{(l)}$ is chosen according to Lemma \ref{lm:MDSA-1}. Denote $b^2 = \sum_{t=1}^T b_t^2$, $\sigma^2 = \sum_{t=1}^T \sigma_t^2$, and set $\gamma = 1$, $\theta=1/2$. 

    If $\mu =0$ and $\|x-x'\|\leq D$ for all $x,x'\in \mc X$. Then we have 
\begin{equation}\label{eq:acc-1}
    \E_{\internal}[\gapu(\overline{X})]\leq \frac{8L_2 \tilde{D}^2}{(L+1)(L+2)} + \frac{2}{3}(L+3) (\frac{b^2+\sigma^2}{2L_2}+ bD).
\end{equation}
If $\mu>0$, then for $\rho =  (1+\frac{1}{4}\sqrt{\frac{\mu}{L_2}})^{-2}\leq 1-\frac{3}{16}\sqrt{\frac{\mu}{L_2}}$, 
\begin{equation}\label{eq:acc-2}
    \E_{\internal}[\gapu(\overline{X})]\leq \rho^L\cdot 2L_2 \tilde{D}^2 + \frac{b^2+\sigma^2}{2}\cdot (\frac{3}{\mu} + \frac{1}{L_2})\cdot\frac{1}{1-\rho}.
\end{equation}
\end{theorem}

\begin{proof}[Proof of Theorem \ref{thm:acc-mu}]
    For the first claim, from Lemma \ref{lm:acc-inexact-multi-stage}, the following holds for any $w\in \Omega$
\begin{displaymath}
    \overline{\Delta}^{(l',w_{\internal})} =A_{l'}\frac{\|\Delta^{(l',w_{\internal})}\|^2}{2 L_2} +  \langle \Delta^{(l',w_{\internal})},\alpha_{l'}(\mb x^*-\overline{\mb x}^{(l',w_{\internal})}) + A_{l'-1}(\mb x_+^{(l'-1,w_{\internal})}-\mb x^{(l',w_{\internal})})\rangle.
\end{displaymath}
Thus, from Lemma \ref{lm:prop-oracle}, 
\begin{displaymath}
    \E_{\internal}[\overline{\Delta}^{(l')}] \leq \frac{A_{l'}(b^2+\sigma^2)}{2L_2} + A_{l'}bD. 
\end{displaymath}
The result follows from $\frac{(l+1)(l+2)}{4}\leq A_l\leq \frac{(l+1)(l+2)}{2}$ for all $l$ (Lemma \ref{lm:para-seq}) and Lemma \ref{lm:acc-inexact-multi-stage}.

For the second claim, by Lemma \ref{lm:acc-inexact-multi-stage}, with $\gamma = 1,\mu>0,\theta=1/2$,
    \begin{displaymath}
        (\frac{\alpha_{l'}}{\mu} + \frac{A_{l'}}{2\mu}+\frac{A_{l'}}{2L_2})^{-1}\E_{\internal}[\overline{\Delta}^{(l')}] =\E_{\internal}[\|\Delta^{(l')}\|^2]\leq b^2+\sigma^2.
    \end{displaymath}
By convexity of $s\to (1+s)^{-2}$ on $[0,\infty)$, we have $(1+s)^{-2}\leq 1-3s/4$ for all $0\leq s\leq 1$, and so $\rho =  (1+\frac{1}{4}\sqrt{\frac{\mu}{L_2}})^{-2}\leq 1-\frac{3}{16}\sqrt{\frac{\mu}{L_2}}$. The result follows from Lemma \ref{lm:acc-inexact-multi-stage} and Lemma \ref{lm:para-seq}: we have $A_l\geq  (1+\frac{1}{4}\sqrt{\frac{\mu}{L_2}})^{2l} = \rho^{-l}$ and
\begin{displaymath}
    \sum_{l'=0}^l \frac{A_{l'}}{A_l}\leq \sum_{l'=0}^{l} \rho^{l-l'}\leq \sum_{l'=0}^{\infty}  \rho^{l'}\leq \frac{1}{1-\rho}.
\end{displaymath}
\end{proof}

As will be seen in the next section, the overall complexity of our algorithms in the online setting is exponential in $L$. Thus, in practice, $L$ cannot be choosen to be too large, and so (assuming $b,\sigma$ are small) the suboptimality of the accelerated MDSA will be dominated by the first terms in \eqref{eq:acc-1} and \eqref{eq:acc-2}. In this regime, the $O(1/L^2)$ and the $\exp(-\Omega(T\sqrt{\mu/L_2}))$ convergence rates improve over the un-accelerated rate of MDSA ($O(1/L)$ and the $\exp(-\Omega(T\mu/L_2))$)).


\section{Efficient online implementation}\label{sec:online-implementation}

The mirror descent stochastic approximation algorithms presented in Section \ref{sec:SA} converge to optimal solutions under mild assumptions on the problems and the stochastic conditional gradients. The output, after running $L$ iterations, is a set of $T$ policies $\overline{X}_{1:T}$ where $\overline{X}_t(\cdot,\bz_{S(t)}(w_{\internal}))\in \mc P_t(\mc X_t)$ for all $t$ (for \eqref{eq:obj_unconstrained-online}). However, even for problems where $\Xi_t = \{0,1\}$ for $2\leq t\leq T$ (and $\Xi_1 = \{0\}$), i.e. the external random variable $\bx_t$ at stage $t$ only has $2$ possible realizations, the effective dimension of the problem becomes exponential in $T$: for instance, for the stage $T$ policy $X_t\in \mc P_t(\mc X_t)$, one needs to solve for $X_t(\xi_{1:t})$ for all $\xi\in\{0\}\times \{0,1\}^{T-1}$. Moreover, in this work, we consider the general setup where the external random variables $\bx_{1:T}$ have infinitely many possible realizations, making it impossible to write down explicitly the full policies.

As a result, instead of aiming to solve all policies $X_t$, we take an \textit{online} perspective, where only the relevant component, $X_t(\bx_{1:t}(w))$ is needed, and it is needed only at stage $t$. In the product space $\Omega\times \Omega_{\internal}$, this means that we only need to \textit{evaluate} $\overline{X}_t(\bx_{1:t}(w),\bz_{S(t)}(w_{\internal}))$ at stage $t$. In Algorithm \ref{alg:hypo-ms-u}, correspondingly, one only needs to evaluate $\overline{X}^{(0:L)}_t(\bx_{1:t}(w),\bz_{S(t)}(w_{\internal})) = X^{(0:L)}_t(\bx_{1:t}(w))$ and then take their weighted average, suggesting that there is no need to evaluate the updates for all $\overline{X}_t$.  

Fortunately, the hypothetical MDSA algorithms in Section \ref{sec:SA} enjoy the benign structure that the updates are decomposable across stages and scenarios. For instance, to evaluate $\overline{X}_t^{(l)}(\xi_{1:t},\zeta_{S'(t,l-1)})$ for a particular $\xi_{1:t}\in \Xi_{1:t}$, all information needed is the following:
\begin{enumerate}
    \item $X_{t-1}^{(l-1)}(\xi_{1:(t-1)}) = \overline{X}_{t-1}^{(l-1)}(\xi_{1:(t-1)},\zeta_{S'(t-1,l-2)})$, the $(l-1)$-th iteration update of the $(t-1)$-th stage policy for $\xi_{1:(t-1)}$; 
    \item $X_{t}^{(l-1)}(\xi_{1:t}) =\overline{X}_t^{(l-1)}(\xi_{1:t},\zeta_{S'(t,l)})$, the $(l-1)$-th iteration update of the $t$-th stage policy for $\xi_{1:t}$; 
    \item $X_{t+1}^{(l-1)}(\xi_{1:t},\widehat{\bx}_{t+1}(\xi_{1:t},\zeta_{t,l})) =\overline{X}_{t+1}^{(l-1)}(\xi_{1:t},\widehat{\bx}_{t+1}(\xi_{1:t},\zeta_{t,l}),\zeta_{S'(t+1,l-2)})$, the $(l-1)$-th iteration update of the $(t+1)$-th stage policy for $\xi_{1:(t-1)},\widehat{\bx}_{t+1}(\xi_{1:t},\zeta_{t,l})$, where $\widehat{\bx}_{t+1}$ is a scenario sampler using the random variable $\bz_{t,l}$ (whose realization is $\zeta_{t,l} = \bz_{t,l}(w_{\internal})$). 
\end{enumerate}

Thus, information such as the policies $\overline{X}_t^{(l)}(\xi'_{1:t},\zeta_{S'(t,l)})$ for $\xi'_{1:t}\neq \xi_{1:t}$, and for policies $\overline{X}^{(l)}_s$ for $s> t+1$ and $s<t-1$ are not needed at all! Importantly, not all updates in the MDSA are needed if the output is only the relevant component of the (infinitely dimensional) policies. See Figure \ref{fig:tree-visited-3d} for an example.

\begin{figure}
\pgfdeclarelayer{main0}
\pgfdeclarelayer{main1}
\pgfdeclarelayer{main2}
\pgfdeclarelayer{edges1}
\pgfdeclarelayer{edges2}
\pgfdeclarelayer{nodes}
\pgfsetlayers{main0,edges1,main1,edges2,main2,nodes}
\centering
\resizebox{.9\linewidth}{!}{
\begin{tikzpicture}[
  x={(1cm,0cm)},
  y={(0.5cm,0.2cm)},
  z={(0cm,0.7cm)},
  node/.style={circle,draw,minimum size=2.5mm,inner sep=0pt},
  nodenode/.style={star,fill, yellow,opacity = 0.8, draw=none, minimum size=5mm, inner sep=0pt}, nodenodelabel/.style={fill=yellow, fill opacity = 0.5,font=\footnotesize},
  edge/.style={draw},
  edgeedge/.style={draw,ultra thick}
]

\def\xs{2.1}
\def\ys{1.5}
\def\zs{2.4}
\def\xmax{10}
\def\ymax{3.5}

\foreach \k in {0,1,2}{
\begin{pgfonlayer}{main\k}
  \pgfmathsetmacro{\z}{\k*\zs}
  \fill[gray!10,opacity=0.8]
    (-0.6,-\ymax,\z) --
    (\xmax,-\ymax,\z) --
    (\xmax, \ymax,\z) --
    (-0.6, \ymax,\z) -- cycle;

  \draw[gray]
    (-0.6,-\ymax,\z) --
    (\xmax,-\ymax,\z) --
    (\xmax, \ymax,\z) --
    (-0.6, \ymax,\z) -- cycle;

  \node[node] (r\k) at (0,0,\z) {};

  \node[node] (a\k) at (\xs,0,\z) {};

  \node[node] (b\k1) at (2*\xs, \ys,\z) {};
  \node[node] (b\k2) at (2*\xs, 0,\z) {};
  \node[node] (b\k3) at (2*\xs,-\ys,\z) {};

  \node[node] (c\k1) at (3*\xs, 1.2*\ys,\z) {};
  \node[node] (c\k2) at (3*\xs, 0.4*\ys,\z) {};
  \node[node] (c\k3) at (3*\xs,-0.4*\ys,\z) {};
  \node[node] (c\k4) at (3*\xs,-1.2*\ys,\z) {};

  \node[node] (d\k1) at (4*\xs, 1.6*\ys,\z) {};
  \node[node] (d\k2) at (4*\xs, 0.8*\ys,\z) {};
  \node[node] (d\k3) at (4*\xs,-0.8*\ys,\z) {};
  \node[node] (d\k4) at (4*\xs,-1.6*\ys,\z) {};

  \draw[edge] (r\k) -- (a\k);

  \draw[edge] (a\k) -- (b\k1);
  \draw[edge] (a\k) -- (b\k2);
  \draw[edge] (a\k) -- (b\k3);

  \draw[edge] (b\k1) -- (c\k1);
  \draw[edge] (b\k1) -- (c\k2);
  \draw[edge] (b\k2) -- (c\k3);
  \draw[edge] (b\k3) -- (c\k4);

  \draw[edge] (c\k1) -- (d\k1);
  \draw[edge] (c\k2) -- (d\k2);
  \draw[edge] (c\k3) -- (d\k3);
  \draw[edge] (c\k4) -- (d\k4);
  \end{pgfonlayer}
}

\begin{pgfonlayer}{edges1}
\draw[edgeedge,orange] (b01) -- (b11);
\draw[edgeedge,orange] (b01) -- (a1);
\draw[edgeedge,orange] (b01) -- (c11);
\end{pgfonlayer}

\begin{pgfonlayer}{edges2}

\draw[edgeedge,green] (b11) -- (b21);
\draw[edgeedge,green] (b11) -- (a2);
\draw[edgeedge,green] (b11) -- (c21);

\draw[edgeedge,red] (a1) -- (a2);
\draw[edgeedge,red] (a1) -- (r2);
\draw[edgeedge,red] (a1) -- (b22);

\draw[edgeedge,blue] (c11) -- (c21);
\draw[edgeedge,blue] (c11) -- (b21);
\draw[edgeedge,blue] (c11) -- (d21);

\end{pgfonlayer}

\begin{pgfonlayer}{nodes}
\node[anchor=east]
  at (-0.5,0,0*\zs) {$X^{(l)}$};
  \node[anchor=east]
  at (-0.5,0,1*\zs) {$X^{(l-1)}$};
  \node[anchor=east]
  at (-0.5,0,2*\zs) {$X^{(l-2)}$};
\node[nodenode] at (a1) {};
\node[nodenodelabel] at (a1) {$X_{t-1}^{(l-1)}(\xi_{1:(t-1)})$};
\node[nodenode] at (b11) {};
\node[nodenodelabel] at (b11) {$X_t^{(l-1)}(\xi_{1:t})$};

\node[nodenode] at (c11) {};
\node[nodenodelabel] at (c11) {$X_{t+1}^{(l-1)}(\xi_{1:t},\widehat{\bx}_{t+1})$};

\node[nodenode] at (b01) {};
\node[nodenodelabel] at (b01) {$X_t^{(l)}(\xi_{1:t})$};

\node[nodenode] at (a2) {};
\node[nodenode] at (r2) {};
\node[nodenode] at (b22) {};

\node[nodenode] at (b21) {};
\node[nodenode] at (c21) {};
\node[nodenode] at (d21) {};

\end{pgfonlayer}

\end{tikzpicture}}
    \caption{Policies computed using Algorithm \ref{alg:hypo-ms-u} for a problem where $T=5$. The $3$ planes represent the policies $X^{(l-2)}$, $X^{(l-1)}$, and $X^{(l)}$ (from top to bottom), computed by Algorithm \ref{alg:hypo-ms-u} plane by plane, in the order of $\{X^{(l-2)}(\xi),~\xi\in \Xi\}\to \{X^{(l-1)}(\xi),~\xi\in \Xi\}\to \{X^{(l)}(\xi),~\xi\in \Xi\}$. In each plane, the tree has $5$ layers representing $5$ stages, and each node in (horizontal) layer $t$ represents the policy $X_t$ for one realization of $\bx_{1:t}$. To evaluate $X_t^{(l)}(\xi_{1:t})$ for a particular $\xi_{1:t}$ (the yellow node in the bottom plane), only the yellow nodes connected to it using orange edges in the plane above it are needed. They correspond to $X_{t-1}^{(l-1)}(\xi_{1:(t-1)})$, $X_t^{(l-1)}(\xi_{1:t})$, and $X_{t+1}^{(l-1)}(\xi_{1:t},\widehat{\bx}_{t+1}(\xi_{1:t},\bz_{t,l}))$, which further need the starred nodes in the top plane connected to them using red, green, and blue edges, respectively.}
    \label{fig:tree-visited-3d}
\end{figure}

In this section, we take advantage of this decomposability and propose an efficient online updating mechanism, which computes only the \textit{necessary information} adaptive to the sequentially revealed $\bx_{1:T}$. In Section \ref{sec:eval_oracle}, we construct recursively an evaluation oracle for $\overline{X}_t^{(l)}$ and analyze its computation and space complexity; then in Section \ref{sec:online_MDSA}, we apply the proposed evaluation oracle to our MDSA algorithms, which result in implementable online algorithms. 

\subsection{Recursive construction of the evaluation oracle}\label{sec:eval_oracle}

In this section, we state the updating mechanism to evaluate $\overline{X}_t^{(l)}(\xi_{1:t},\zeta_{S'(t,l-1)})$ for a particular $\xi_{1:t}\in \Xi_{1:t}$ and $\zeta\in \bz$. Since $\zeta$ is assumed fixed, we adopt the notation in Algorithm \ref{alg:hypo-ms-u} and abbreviate it as $X_t^{(l)}(\xi_{1:t})$. 

To formalize the discussion on the memory requirement during the update, we assume that all processes during an algorithm have access to a shared memory space denoted as $\memo$, which admits the following operations: $\rr{x}$, $\ww{x}$, $\del{x}$, which reads, writes, and deletes the value of $x$ from $\memo$. When some information $x$ or some set $\mc S$ is in $\memo$, we abbreviate it as $x\in \memo$ and $\mc S\subset \memo$.  

We assume that all the initializations $X_t^{(0)}$ and the internal random variables $\bz_{t,l}(w_{\internal}) =\zeta_{t,l} $ used in constructing the stochastic conditional gradients are stored in the shared memory. That is, $\memo^{(init)}\subset \memo$ throughout the algorithm, where
$\memo^{(init)} := \{X_t^{(0)}(\xi_{1:t}),~t=1,\ldots,T, \xi\in \Xi\}\cup \{\zeta_{1:T}^{(0:L)}\}$\footnote{Notice that this requires storing policies $X_t^{(0)}$ for all possible realizations of $\xi\in \Xi$, which requires infinitely large memory. However, if one uses constant initialization, i.e. $X_t^{(0)}(\xi_{1:t})$ are the same for all $\xi_{1:t}$, then the memory needed is only linear in $T$. }. 

On a high level, to evaluate $X_t^{(l)}(\xi_{1:t})$, we assume that $X_{t-1}^{(0:(l-1))}(\xi_{1:(t-1)})$ has already been evaluated, then for $l'=1,\ldots,l$, the following is computed:
\begin{itemize}
    \item {\color{red}{step 1}}: calling the evaluation procedure for $X_{t+1}^{(l'-1)}(\xi_{1:t},\widehat{\bx}_{t+1}(\xi_{1:t},\zeta_{t,l'-1}))$, i.e. the $(l'-1)$-th update of $X_{t+1}$ at the sampled ($(\xi_{1:t},\widehat{\bx}_{t+1}(\xi_{1:t},\zeta_{t,l'-1}))$;
    \item {\color{blue}{step 2}}: use $X_{t-1}^{(l'-1)}(\xi_{1:(t-1)})$, $X_{t}^{(l'-1)}(\xi_{1:t})$, and $X_{t+1}^{(l'-1)}(\xi_{1:t},\widehat{\bx}_{t+1}(\xi_{1:t},\zeta_{t,l'-1}))$ to evaluate $X_t^{(l')}(\xi_{1:t})$.
\end{itemize}

\begin{figure}[h!]
    \centering
    \resizebox{.9\linewidth}{!}{
    \begin{tikzpicture}[
  sq/.style={draw, rectangle, minimum width=4cm},
  ssq/.style={draw, rectangle, minimum width=4cm,pattern=north east lines,
    pattern color=black,
    fill opacity=0.3,     
    text opacity=1},
  sqq/.style={rectangle, minimum width=4cm},
edge/.style={draw}
]
\def\xs{5cm}
\def\ys{1.3cm}

\draw[fill=red!10, fill opacity=0.5,dashed,rounded corners,draw=red] (0.55*\xs,-1.4*\ys) rectangle (1.45*\xs,0.4*\ys);
\coordinate (rect) at (1.45*\xs,-0.5*\ys);

\draw[fill=blue!10, fill opacity=0.5,rounded corners,draw=blue] (-.45*\xs,-2.4*\ys) rectangle (2.55*\xs,-1.6*\ys);
\coordinate (rect2) at (1*\xs,-2.4*\ys);

\node[ssq] (a0) at (0, 0) {$X^{(0)}_{t-1}(\xi_{1:(t-1)})$};
\node[sqq] (a1) at (0, -\ys) {$\vdots$};
\node[ssq] (a2) at (0, -2*\ys) {$X^{(l'-1)}_{t-1}(\xi_{1:(t-1)})$};
\node[sqq] (a3) at (0, -3*\ys) {$\vdots$};
\node[ssq] (a4) at (0, -4*\ys) {$X^{(l-1)}_{t-1}(\xi_{1:(t-1)})$};

\node[ssq] (b0) at (\xs, 0) {$X^{(0)}_{t}(\xi_{1:t})$};
\node[sqq] (b1) at (\xs, -\ys) {$\vdots$};
\node[ssq] (b2) at (\xs, -2*\ys) {$X^{(l'-1)}_{t}(\xi_{1:t})$};
\node[sq] (b3) at (\xs, -3*\ys) {$X^{(l')}_{t}(\xi_{1:t})$};

\node[sq] (c) at (2*\xs, -2*\ys) {$X^{(l'-1)}_{t+1}(\xi_{1:t},\widehat{\bx}_{t+1}(\xi_{1:t},\zeta_{t,l'-1}))$};

\draw[->,thick,dashed,red] (rect) --node[midway, right] { \color{red}{step 1}} (c) ;

\draw[->,thick,blue] (rect2) --node[midway, right] { \color{blue}{step 2}} (b3) ;

\end{tikzpicture}}
    \caption{$l'$-th iteration in the evaluation procedure for $X_t^{(l)}(\xi_{1:t})$. Assume that all the hatched nodes $X_{t-1}^{(0:(l-1))}(\xi_{1:(t-1)})$ and $X_t^{(0:(l'-1)}(\xi_{1:t})$ have already been evaluated, then Algorithm \ref{alg:update-procedure} 1) calls the evaluation procedure for $X_{t+1}^{(l'-1)}(\xi_{1:t},\widehat{\bx}_{t+1}(\xi_{1:t},\zeta_{t,l'-1}))$ (this is valid since $X_t^{(0:(l'-2)}(\xi_{1:t})$ has already been computed); 2) uses $X_{t-1}^{(l'-1)}(\xi_{1:(t-1)})$, $X_{t}^{(l'-1)}(\xi_{1:t})$, and $X_{t+1}^{(l'-1)}(\xi_{1:t},\widehat{\bx}_{t+1}(\xi_{1:t},\zeta_{t,l'-1}))$ to evaluate $X_t^{(l')}(\xi_{1:t})$.
}
    \label{fig:algo-idea}
\end{figure}

More precisely, we have the following evaluation procedure in Algorithm \ref{alg:update-procedure}. We show its validity, and oracle and space complexity in Lemma \ref{lm:procedure-prop}.

\begin{algorithm}
\caption{Evaluation procedure for $X_t^{(l)}(\xi_{1:t})$}\label{alg:update-procedure}
\begin{algorithmic}
\Require $\xi_{1:t}$, $\memo^{(init)}\subset \memo$, and if $t\geq 2$, $ X_{t-1}^{(l')}(\xi_{1:(t-1)})\in \memo$ for $l'=0,\ldots,l-1$.
\Ensure $X_t^{(l')}(\xi_{1:t})\in \memo$ for $l'=0,\ldots,l$. 

\If{$t = T$} 
\For{$l'=1,\ldots,l$}
\State $\rr{X_T^{(l'-1)}(\xi_{1:T}),X_{T-1}^{(l'-1)}(\xi_{1:(T-1)})}$
\State {\color{blue}{Step 2}}: compute $G_T^{(l'-1)}(\xi_{1:T}) = \frac{\partial}{\partial x_t} f_t(X_{T-1}^{(l'-1)}(\xi_{1:(T-1)}),X_T^{(l'-1)}(\xi_{1:T}),\xi_{1:T})$, then compute $X_T^{(l')}(\xi_{1:T})$ using \eqref{eq:MD1}, $\ww{X_T^{(l')}(\xi_{1:T})}$
\EndFor
\Else 
\For{$l'=1,\ldots,l$}
\State $\rr{\zeta_{t,l'-1}}$, compute $\widehat{\bx}_{t+1}(\xi_{1:t},\zeta_{t,l'-1})$
\State {\color{red}{Step 1}}: evaluate $X_{t+1}^{(l'-1)}(\xi_{1:t},\widehat{\bx}_{t+1}(\xi_{1:t},\zeta_{t,l'-1}))$
\State $\rr{X_t^{(l'-1)}(\xi_{1:t}),X_{t+1}^{(l'-1)}(\xi_{1:t},\widehat{\bx}_{t+1}(\xi_{1:t},\zeta_{t,l'-1}))}$, and if $t\geq 2$, $\rr{X_{t-1}^{(l'-1)}(\xi_{1:(t-1)})}$
\State {\color{blue}{Step 2}}: compute $G_t^{(l'-1)}(\xi_{1:t}) $ using \eqref{eq:choice-of-G}, then compute $X_t^{(l')}(\xi_{1:t})$ using \eqref{eq:MD1}
\State $\del{X_{t+1}^{(1:(l'-1)}(\xi_{1:t},\widehat{\bx}_{t+1}(\xi_{1:t},\zeta_{t,l'-1}))}$, $\ww{X_t^{(l')}(\xi_{1:t})}$
\EndFor
\EndIf
\end{algorithmic}
\end{algorithm}

\textbf{Remark.} Similar evaluation oracle can be constructed for Algorithm \ref{alg:hypo-ms-s} for the saddle point problem \eqref{eq:minmax-online}, with $X_t^{(l)}$ replaced with $Z_t^{(l)}$, \eqref{eq:choice-of-G} replaced with the corresponding one for the saddle point problem, and \eqref{eq:MD1} replaced with \eqref{eq:MD2}. 

For Algorithm \ref{alg:hypo-ms-acc} for the accelerated MDSA, we can first rewrite the update \eqref{eq:update_acc_inexact} as the following,
\begin{align}\label{eq:tree_acc_inexact}
    & X_{t+}^{(l)}(\xi_{1:t}) = \argmin_{x_t\in \mc X_t}\langle G_{t}^{(l)}(\xi_{1:t}),x_t\rangle + \frac{(1+\gamma)L_2}{2}\|x_t-X_{t}^{(l)}(\xi_{1:t})\|^2\nonumber\\
    & G_{t+}^{(l)}(\xi_{1:t}) = G_{t+}^{(l-1)}(\xi_{1:t}) + \alpha_l (G_t^{(l)}(\xi_{1:t}) - (1-\theta)\mu X_t^{(l)}(\xi_{1:t}))\nonumber\\
    & X_{t-}^{(l)}(\xi_{1:t}) = \argmin_{x_t\in \mc X_t}  (1+\gamma)L_2v_t(x_t) + \frac{A_l(1-\theta)\mu}{2}\|x_t\|^2 + \langle G_{t+}^{(l)}(\xi_{1:t}),x_t\rangle\nonumber\\
    &X_{t}^{(l+1)}(\xi_{1:t}) = \tau_l X_{t-}^{(l)}(\xi_{1:t}) + (1-\tau_l)X_{t+}^{(l)}(\xi_{1:t}).
\end{align}
Above, $G_{t+}^{(l)} $ is the cumulative gradient, 
\begin{equation}\label{eq:cum_grad}
    G_{t+}^{(l)} = \sum_{l'=0}^{l} \alpha_{l'}( G_{t}^{(l')}-(1-\theta)\mu X_{t}^{(l')})- (1+\gamma)L_2 \nabla v(X^{(0)}_{t}).
\end{equation}
For the procedure Algorithm \ref{alg:update-procedure}, instead of evaluating (and keeping in memory) $X_t^{(l)}$, we evaluate and store $(X_{t}^{(l)}, G_{t+}^{(l)})$.

\begin{lemma}\label{lm:procedure-prop}
    The updates in Algorithm \ref{alg:update-procedure} are valid: when $X_{t+1}^{(l'-1)}(\xi_{1:t},\widehat{\bx}_{t+1}(\xi_{1:t},\zeta_{t,l'-1}))$ is evaluated, $\memo$ contains all the required information (i.e. $X_{t}^{(0:(l'-2))}(\xi_{1:t})$).
    
The oracle complexity for the stochastic conditional gradient and the proximal update is no more than $\min(2^l-1,l^{T-t+1})$.

Denoting the shared memory space at the start and the end of the procedure as $\overline{\memo}_0$ and $\overline{\memo}_1$ respectively, then $\overline{\memo}_1 = \overline{\memo}_0\cup \{X_t^{(1:l)}(\xi_{1:t})\}$. Further assuming that $x_{t'}\in \mc X_{t'}$ can be stored with $B$ bits for all $t'$, then $\memo\setminus\overline{\memo}_0$ requires no more than $O(l^2B )$ bits of space throughout the procedure. 
\end{lemma}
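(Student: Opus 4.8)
The plan is to establish all three assertions simultaneously by backward induction on the stage $t$ (equivalently, induction on $T-t$), for every $l\ge 0$, carrying along the invariant that an invocation of $\mc P_{t,l}$ never deletes an element of $\memo^{(init)}$: the only deletions it issues target states $S_{t+1}^{(\ge 1)}$ of the sampled children, and by the induction hypothesis its sub-calls delete only states at levels $\ge t+2$. The base case $t=T$ is immediate: $\mc P_{T,l}$ makes no recursive call, so validity is vacuous; it only writes $S_T^{(1)}([w]_T),\dots,S_T^{(l)}([w]_T)$, each computed from $\memo^{(init)}$ and from states written in earlier passes of its own loop, so $\overline{\memo}_1=\overline{\memo}_0\cup\{S_T^{(1:l)}([w]_T)\}$; and it runs $l$ loop iterations while holding at most $l$ extra states, i.e.\ $O(lB)$ bits.

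For the inductive step $t<T$, I would examine the $l'$-th pass of the loop. First, the three preconditions of the sub-call $\mc P_{t+1,l'-1}$ must hold at the moment it is issued: $[w_c]_{t+1}$ has just been sampled; $\memo^{(init)}\subset\memo$ by the deletion invariant; and, since $t+1\ge 2$, one needs $S_t^{(0:(l'-2))}([w]_t)\in\memo$, which holds because $S_t^{(0)}([w]_t)\in\memo^{(init)}$ while $S_t^{(1)}([w]_t),\dots,S_t^{(l'-1)}([w]_t)$ were written during passes $1,\dots,l'-1$ of the current $\mc P_{t,l}$ and, by the invariant, are never deleted afterwards. This yields the validity claim. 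By the induction hypothesis applied to $\mc P_{t+1,l'-1}$, that sub-call terminates with $S_{t+1}^{(0:(l'-1))}([w_c]_{t+1})\in\memo$ and its \emph{net} effect on memory is exactly the addition of $S_{t+1}^{(1:(l'-1))}([w_c]_{t+1})$ (these states are absent beforehand: if the same child was sampled in an earlier pass, the states written then were deleted at that pass's end). The remaining reads of pass $l'$---namely $S_{t+1}^{(l'-1)}([w_c]_{t+1})$, $S_t^{(l'-1)}([w]_t)$, and, when $t\ge 2$, $S_{t-1}^{(l'-1)}([w]_{t-1})$ (available because $l'-1\le l-1$ and the precondition of $\mc P_{t,l}$ supplies $S_{t-1}^{(0:(l-1))}([w]_{t-1})$, which is never deleted)---are then all present, and the closing $\del{S_{t+1}^{(1:(l'-1))}([w_c]_{t+1})}$ followed by $\ww{S_t^{(l')}([w]_t)}$ makes the net effect of pass $l'$ the sole addition of $S_t^{(l')}([w]_t)$. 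Summing over $l'=1,\dots,l$ gives $\overline{\memo}_1=\overline{\memo}_0\cup\{S_t^{(1:l)}([w]_t)\}$.

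For the quantitative bounds I would set up two recursions. Let $N(t,l)$ count the total loop iterations run within $\mc P_{t,l}$ (each iteration triggering exactly one conditional-gradient evaluation and one proximal update). Then $N(T,l)=l\le 2^l-1$, and for $t<T$ the $l'$-th pass contributes $1$ plus $N(t+1,l'-1)$, so $N(t,l)=l+\sum_{j=0}^{l-1}N(t+1,j)\le l+\sum_{j=0}^{l-1}(2^j-1)=2^l-1\le 2^l$ by induction. For the peak of $|\memo\setminus\overline{\memo}_0|$: since the storage produced by a completed sub-call is freed by the matching deletion before the next pass at that level begins, the sub-calls that contribute to the peak are exactly those currently on the call stack, which form a single chain $\mc P_{t,l},\mc P_{t+1,\cdot},\dots$ whose index parameter strictly decreases at each level, so the chain has length at most $l$; each procedure on it holds $O(lB)$ bits of states it has itself written plus $O(B)$ bits of gradient scratch, whence $|\memo\setminus\overline{\memo}_0|=O(l^2B)$ bits throughout.

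The step I expect to be the main obstacle is the memory bookkeeping in the inductive step: one must pin down exactly which states are live in $\memo$ at each instruction of pass $l'$ and verify that the deletion closing a pass removes \emph{precisely} the states the preceding sub-call had added (so that nothing needed later is lost), and that no ancestor states $S_{t-1}^{(0:(l-1))}([w]_{t-1})$ or $S_t^{(0:(l-1))}([w]_t)$ required by a deeper sub-call are ever collaterally deleted, and that $\mc P_{t,l}$ is only entered when $\memo\setminus\memo^{(init)}$ holds no stale high-index state of a descendant. Once the deletion invariant and the observation that ``a level-$t$ pass deletes only level-$(t+1)$ states of the children'' are made rigorous and shown to persist through the recursion, the validity claim and both bounds follow by the routine inductions indicated above.
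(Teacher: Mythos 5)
Your proposal is correct and follows essentially the same route as the paper: an induction over the recursive structure for validity (with the deletion bookkeeping made explicit), the recursion $a_l \le l + \sum_{l'<l} a_{l'} \le 2^l-1$ for the oracle count, and a per-level accounting of at most $O(lB)$ live bits along a call chain of depth $O(l)$, matching the paper's recursion $b_l \le lB + \max_{l'<l} b_{l'} = O(l^2B)$. The only difference is presentational — you phrase the memory peak via the call stack and carry an explicit deletion invariant, whereas the paper states the same facts through its induction on $(l,t)$.
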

\begin{proof}[Proof of Lemma \ref{lm:procedure-prop}]
For the first claim, if $t=  T$, then the procedure iteratively computes $X_T^{(1)}(\xi_{1:t}),\ldots,S_T^{(l)}(\xi_{1:t})$, and indeed, for $l'=1,\ldots,l$, since by assumption, $\memo$ contains $X_{T-1}^{(l'-1)}(\xi_{1:(t-1)})$, and $X_T^{(l'-1)}(\xi_{1:t})$ has been computed and stored in $\memo$ in the previous iteration, $X_T^{(l')}(\xi_{1:t})$ can be computed. If $l>0$ and $t<T$, then $X_{t+1}^{(l'-1)}(\xi_{1:t},\widehat{\bx}_{t+1}(\xi_{1:t},\zeta_{t,l'-1}))$ is evaluated, and indeed, $X_{t}^{(0:(l'-2))}(\xi_{1:t})$ have been computed in the previous iterations and are stored in $\memo$. For $l'=1,\ldots,l$, when computing $X_t^{(l')}(\xi_{1:t})$, ${X}_t^{(l'-1)}(\xi_{1:t})$ has been computed in previous iteration, $X_{t+1}^{(l'-1)}(\xi_{1:t},\widehat{\bx}_{t+1}(\xi_{1:t},\zeta_{t,l'-1}))$ has been evaluated, and if $t\geq 2$, $X_{t-1}^{(l'-1)}(\xi_{1:(t-1)})$ is in $\memo$ by assumption.

For the second claim, first, notice that for the procedure, the gradient oracle calls involved are the following: $l$ calls to $\frac{\partial}{\partial x_t}f$, and if $l>0,t<T$, then all oracle calls when evaluating $X_{t+1}^{(l'-1)}(\xi_{1:t},\widehat{\bx}_{t+1}(\xi_{1:t},\zeta_{t,l'-1}))$ for $l'=0,\ldots,l-1$. Denoting the number of total oracle calls when evaluating $X_{t'}^{(l')}(\xi'_{1:t})$ by $a_{t',l'}$. Then we can take $a_{t',0}=0$ and $a_{T+1,l'}=0$, and the following holds 
\begin{displaymath}
    a_{t,l} = l + \sum_{l'=1}^{l-1} a_{t+1,l'}.
\end{displaymath}
Simple induction then shows that $a_{t,l}\leq \min(2^l-1,l^{T-t+1})$ for all $t,l$.

For the third claim, the first part can be proven by induction on $(l,t)$ where the base case is $l=0$ and $t = T$: if $l=0$ then $\overline{\memo}_0 = \memo$; if $t=T$, then only $lB$ bits of information (i.e. $X_t^{(1:l)}(\xi_{1:t})$) is written to $\memo$. If $l>0,t<T$, since by inductive hypothesis, evaluating $X_{t+1}^{(l'-1)}(\xi_{1:t},\widehat{\bx}_{t+1}(\xi_{1:t},\zeta_{t,l'-1}))$ only adds $X_{t+1}^{(1:(l'-1))}(\xi_{1:t},\widehat{\bx}_{t+1}(\xi_{1:t},\zeta_{t,l'-1}))$ to $\memo$, which is deleted by end of iteration $l'$, and so after the $l'$-th iteration only $X_t^{(l')}(\xi_{1:t})$ is added to $\memo$. For the second part, the statement is true for $l=0$ and $t = T$. For $l>0,t<T$, notice that $X_{t+1}^{(l'-1)}(\xi_{1:t},\widehat{\bx}_{t+1}(\xi_{1:t},\zeta_{t,l'-1}))$ are evaluated sequentially, and at the end of iteration $l'$, all intermediate results are deleted. Denoting the maximum (over all $t'$) additional space needed by when evaluating $X_{t+1}^{(l'-1)}(\xi_{1:t},\widehat{\bx}_{t+1}(\xi_{1:t},\zeta_{t,l'-1}))$ by $b_{l'}$: we can take $b_0 =0$, $b_1 = B$, then we have 
\begin{displaymath}
    b_{l}\leq l B + \max_{l'=0,\ldots, l-1} b_{l'}.
\end{displaymath}
Then induction gives $b_l\leq Bl(l+1)/2 = O(l^2B)$. 
\end{proof}

\textbf{Remark.} In the space requirement above, we focus on the size of $\memo$, the shared memory space, and omit the \textit{working memory} needed to compute the partial derivatives and the Bregman projection. We justify this by pointing out that usually these operations are also memory efficient: the extra space for the computation is on the same order as the space needed to store the states.

\begin{figure}[htbp]
\centering
\resizebox{.9\linewidth}{!}{
\begin{tikzpicture}
    \node[font=\huge] at (0,4.5) {$t=1$};
    \draw (0,-2) -- ++(0,6);
\end{tikzpicture}
\hspace{0.2em}
\begin{tikzpicture}[
reds/.style={circle,draw,fill=blue!10,minimum size=20},
redsv/.style={circle,draw,line width=0.8mm,fill=blue!10,minimum size=20},
redsvh/.style={circle,draw,line width=0.8mm,minimum size=20,preaction={pattern=north east lines, pattern color=blue}},
greens/.style={circle,draw,fill=green!10,minimum size=20},
greensv/.style={circle,draw,line width=0.8mm,fill=green!10,minimum size=20},
blues/.style={circle,draw,fill=red!10,minimum size=20},
bluesv/.style={circle,draw,line width=0.8mm,fill=red!10,minimum size=20}, nodenode/.style={star,fill=yellow, fill opacity=0.5, text opacity=1,draw=none, minimum size=20}]
\node [redsv] (layer1-0) at (0,2.5) {};
\node [nodenode] at (layer1-0) {$0$};
\node [greensv] (layer2-0) at (1,1.5) {$0$};
\foreach \x in {1,...,2}
{\pgfmathtruncatemacro{\label}{\x}
    \node [greens] (layer2-\x) at (1,1.5 +\x) {$0$};}
\foreach \xx in {0,...,5}
{\pgfmathtruncatemacro{\label}{\xx}
    \node [blues] (layer3-\xx) at (2,\xx) {$0$};}
\draw [line width=0.8mm,<-] (layer1-0)--(layer2-0);
\draw (layer1-0)--(layer2-1);
\draw (layer1-0)--(layer2-2);
\draw (layer2-0)--(layer3-0);
\draw (layer2-0)--(layer3-1);
\draw (layer2-1)--(layer3-2);
\draw (layer2-1)--(layer3-3);
\draw (layer2-2)--(layer3-4);
\draw (layer2-2)--(layer3-5);
\end{tikzpicture}
\hspace{0.2em}
\begin{tikzpicture}[
reds/.style={circle,draw,fill=blue!10,minimum size=20},
redsv/.style={circle,draw,line width=0.8mm,fill=blue!10,minimum size=20},
greens/.style={circle,draw,fill=green!10,minimum size=20},
greensv/.style={circle,draw,line width=0.8mm,fill=green!10,minimum size=20},
blues/.style={circle,draw,fill=red!10,minimum size=20},
bluesv/.style={circle,draw,line width=0.8mm,fill=red!10,minimum size=20}, nodenode/.style={star,fill=yellow, fill opacity=0.5, text opacity=1,draw=none, minimum size=20}]
\node [redsv] (layer1-0) at (0,2.5) {};
\node [nodenode] at (layer1-0) {$1$};
\node [greens] (layer2-0) at (1,1.5) {$0$};
\node [greens] (layer2-1) at (1,2.5) {$0$};
\node [greensv] (layer2-2) at (1,3.5) {$0$};
\foreach \xx in {0,...,3}
{\pgfmathtruncatemacro{\label}{\xx}
    \node [blues] (layer3-\xx) at (2,\xx) {$0$};}
\node [bluesv] (layer3-4) at (2,4) {$0$};
\node [blues] (layer3-5) at (2,5) {$0$};
\draw (layer1-0)--(layer2-0);
\draw (layer1-0)--(layer2-1);
\draw [line width=0.8mm,<-,dashed] (layer1-0)--(layer2-2);
\draw (layer2-0)--(layer3-0);
\draw (layer2-0)--(layer3-1);
\draw (layer2-1)--(layer3-2);
\draw (layer2-1)--(layer3-3);
\draw [line width=0.8mm,<-](layer2-2)--(layer3-4);
\draw (layer2-2)--(layer3-5);
\end{tikzpicture}
\hspace{0.2em}
\begin{tikzpicture}[
reds/.style={circle,draw,fill=blue!10,minimum size=20},
redsv/.style={circle,draw,line width=0.8mm,fill=blue!10,minimum size=20},
greens/.style={circle,draw,fill=green!10,minimum size=20},
greensv/.style={circle,draw,line width=0.8mm,fill=green!10,minimum size=20},
blues/.style={circle,draw,fill=red!10,minimum size=20},
bluesv/.style={circle,draw,line width=0.8mm,fill=red!10,minimum size=20}, nodenode/.style={star,fill=yellow, fill opacity=0.5, text opacity=1,draw=none, minimum size=20}]
\node [redsv] (layer1-0) at (0,2.5) {};
\node [nodenode] at (layer1-0) {$1$};
\node [greens] (layer2-0) at (1,1.5) {$0$};
\node [greens] (layer2-1) at (1,2.5) {$0$};
\node [greensv] (layer2-2) at (1,3.5) {$1$};
\foreach \xx in {0,...,3}
{\pgfmathtruncatemacro{\label}{\xx}
    \node [blues] (layer3-\xx) at (2,\xx) {$0$};}
\node [blues] (layer3-4) at (2,4) {$0$};
\node [blues] (layer3-5) at (2,5) {$0$};
\draw (layer1-0)--(layer2-0);
\draw (layer1-0)--(layer2-1);
\draw [line width=0.8mm,<-] (layer1-0)--(layer2-2);
\draw (layer2-0)--(layer3-0);
\draw (layer2-0)--(layer3-1);
\draw (layer2-1)--(layer3-2);
\draw (layer2-1)--(layer3-3);
\draw (layer2-2)--(layer3-4);
\draw (layer2-2)--(layer3-5);
\end{tikzpicture}
\hspace{0.2em}
\begin{tikzpicture}[
reds/.style={circle,draw,fill=blue!10,minimum size=20},
redsv/.style={circle,draw,line width=0.8mm,fill=blue!10,minimum size=20},
greens/.style={circle,draw,fill=green!10,minimum size=20},
greensv/.style={circle,draw,line width=0.8mm,fill=green!10,minimum size=20},
blues/.style={circle,draw,fill=red!10,minimum size=20},
bluesv/.style={circle,draw,line width=0.8mm,fill=red!10,minimum size=20}, nodenode/.style={star,fill=yellow, fill opacity=0.5, text opacity=1,draw=none, minimum size=20}]
\node [reds] (layer1-0) at (0,2.5) {};
\node [nodenode] at (layer1-0) {$2$};
\node [greens] (layer2-0) at (1,1.5) {$0$};
\node [greens] (layer2-1) at (1,2.5) {$0$};
\node [greens] (layer2-2) at (1,3.5) {$0$};
\foreach \xx in {0,...,3}
{\pgfmathtruncatemacro{\label}{\xx}
    \node [blues] (layer3-\xx) at (2,\xx) {$0$};}
\node [blues] (layer3-4) at (2,4) {$0$};
\node [blues] (layer3-5) at (2,5) {$0$};
\draw (layer1-0)--(layer2-0);
\draw (layer1-0)--(layer2-1);
\draw (layer1-0)--(layer2-2);
\draw (layer2-0)--(layer3-0);
\draw (layer2-0)--(layer3-1);
\draw (layer2-1)--(layer3-2);
\draw (layer2-1)--(layer3-3);
\draw (layer2-2)--(layer3-4);
\draw (layer2-2)--(layer3-5);
\end{tikzpicture}
\hspace{0.2em}
\begin{tikzpicture}
    \node[font=\huge] at (0,4.5) {$t=2$};
    \draw (0,-2) -- ++(0,6);
\end{tikzpicture}
\hspace{0.2em}
\begin{tikzpicture}[
reds/.style={circle,draw,fill=blue!10,minimum size=20},
redsv/.style={circle,draw,line width=0.8mm,fill=blue!10,minimum size=20},
greens/.style={circle,draw,fill=green!10,minimum size=20},
greensv/.style={circle,draw,line width=0.8mm,fill=green!10,minimum size=20},
blues/.style={circle,draw,fill=red!10,minimum size=20},
bluesv/.style={circle,draw,line width=0.8mm,fill=red!10,minimum size=20}, nodenode/.style={star,fill=yellow, fill opacity=0.5, text opacity=1,draw=none, minimum size=20}]
\node [redsv] (layer1-0) at (0,2.5) {};
\node [nodenode] at (layer1-0) {$2$};
\node [greens] (layer2-0) at (1,1.5) {$0$};
\node [greensv] (layer2-1) at (1,2.5) {};
\node [nodenode] at (layer2-1) {$0$};
\node [greens] (layer2-2) at (1,3.5) {$0$};
\foreach \xx in {0,...,2}
{\pgfmathtruncatemacro{\label}{\xx}
    \node [blues] (layer3-\xx) at (2,\xx) {$0$};}
\node [bluesv] (layer3-3) at (2,3) {$0$};
\node [blues] (layer3-4) at (2,4) {$0$};
\node [blues] (layer3-5) at (2,5) {$0$};
\draw (layer1-0)--(layer2-0);
\draw [line width=0.8mm,->] (layer1-0)--(layer2-1);
\draw (layer1-0)--(layer2-2);
\draw (layer2-0)--(layer3-0);
\draw (layer2-0)--(layer3-1);
\draw (layer2-1)--(layer3-2);
\draw [line width=0.8mm,<-] (layer2-1)--(layer3-3);
\draw (layer2-2)--(layer3-4);
\draw (layer2-2)--(layer3-5);
\end{tikzpicture}
\hspace{0.2em}
\begin{tikzpicture}[
reds/.style={circle,draw,fill=blue!10,minimum size=20},
redsv/.style={circle,draw,line width=0.8mm,fill=blue!10,minimum size=20},
greens/.style={circle,draw,fill=green!10,minimum size=20},
greensv/.style={circle,draw,line width=0.8mm,fill=green!10,minimum size=20},
blues/.style={circle,draw,fill=red!10,minimum size=20},
bluesv/.style={circle,draw,line width=0.8mm,fill=red!10,minimum size=20}, nodenode/.style={star,fill=yellow, fill opacity=0.5, text opacity=1,draw=none, minimum size=20}]
\node [redsv] (layer1-0) at (0,2.5) {};
\node [nodenode] at (layer1-0) {$2$};
\node [greens] (layer2-0) at (1,1.5) {$0$};
\node [greensv] (layer2-1) at (1,2.5) {};
\node [nodenode] at (layer2-1) {$1$};
\node [greens] (layer2-2) at (1,3.5) {$0$};
\node [blues] (layer3-5) at (2,5) {$0$};
\foreach \xx in {0,...,1}
{\pgfmathtruncatemacro{\label}{\xx}
    \node [blues] (layer3-\xx) at (2,\xx) {$0$};}
\foreach \xx in {3,...,5}
{\pgfmathtruncatemacro{\label}{\xx}
    \node [blues] (layer3-\xx) at (2,\xx) {$0$};}
\node [bluesv] (layer3-2) at (2,2) {$0$};
\draw (layer1-0)--(layer2-0);
\draw [line width=0.8mm,->] (layer1-0)--(layer2-1);
\draw (layer1-0)--(layer2-2);
\draw (layer2-0)--(layer3-0);
\draw (layer2-0)--(layer3-1);
\draw [line width=0.8mm,<-,dashed] (layer2-1)--(layer3-2);
\draw (layer2-1)--(layer3-3);
\draw (layer2-2)--(layer3-4);
\draw (layer2-2)--(layer3-5);
\end{tikzpicture}
\hspace{0.2em}
\begin{tikzpicture}[
reds/.style={circle,draw,fill=blue!10,minimum size=20},
redsv/.style={circle,draw,line width=0.8mm,fill=blue!10,minimum size=20},
greens/.style={circle,draw,fill=green!10,minimum size=20},
greensv/.style={circle,draw,line width=0.8mm,fill=green!10,minimum size=20},
blues/.style={circle,draw,fill=red!10,minimum size=20},
bluesv/.style={circle,draw,line width=0.8mm,fill=red!10,minimum size=20}, nodenode/.style={star,fill=yellow, fill opacity=0.5, text opacity=1,draw=none, minimum size=20}]
\node [redsv] (layer1-0) at (0,2.5) {};
\node [nodenode] at (layer1-0) {$2$};
\node [greens] (layer2-0) at (1,1.5) {$0$};
\node [greensv] (layer2-1) at (1,2.5) {};
\node [nodenode] at (layer2-1) {$1$};

\node [greens] (layer2-2) at (1,3.5) {$0$};
\node [blues] (layer3-5) at (2,5) {$0$};
\foreach \xx in {0,...,1}
{\pgfmathtruncatemacro{\label}{\xx}
    \node [blues] (layer3-\xx) at (2,\xx) {$0$};}
\foreach \xx in {3,...,5}
{\pgfmathtruncatemacro{\label}{\xx}
    \node [blues] (layer3-\xx) at (2,\xx) {$0$};}
\node [bluesv] (layer3-2) at (2,2) {$1$};
\draw (layer1-0)--(layer2-0);
\draw [line width=0.8mm,->] (layer1-0)--(layer2-1);
\draw (layer1-0)--(layer2-2);
\draw (layer2-0)--(layer3-0);
\draw (layer2-0)--(layer3-1);
\draw [line width=0.8mm,<-] (layer2-1)--(layer3-2);
\draw (layer2-1)--(layer3-3);
\draw (layer2-2)--(layer3-4);
\draw (layer2-2)--(layer3-5);
\end{tikzpicture}
\hspace{0.2em}
\begin{tikzpicture}[
reds/.style={circle,draw,fill=blue!10,minimum size=20},
redsv/.style={circle,draw,line width=0.8mm,fill=blue!10,minimum size=20},
greens/.style={circle,draw,fill=green!10,minimum size=20},
greensv/.style={circle,draw,line width=0.8mm,fill=green!10,minimum size=20},
blues/.style={circle,draw,fill=red!10,minimum size=20},
bluesv/.style={circle,draw,line width=0.8mm,fill=red!10,minimum size=20}, nodenode/.style={star,fill=yellow, fill opacity=0.5, text opacity=1,draw=none, minimum size=20}]
\node [reds] (layer1-0) at (0,2.5) {};
\node [nodenode] at (layer1-0) {$2$};
\node [greens] (layer2-0) at (1,1.5) {$0$};
\node [greens] (layer2-1) at (1,2.5) {};
\node [nodenode] at (layer2-1) {$2$};

\node [greens] (layer2-2) at (1,3.5) {$0$};
\node [blues] (layer3-5) at (2,5) {$0$};
\foreach \xx in {0,...,1}
{\pgfmathtruncatemacro{\label}{\xx}
    \node [blues] (layer3-\xx) at (2,\xx) {$0$};}
\foreach \xx in {3,...,5}
{\pgfmathtruncatemacro{\label}{\xx}
    \node [blues] (layer3-\xx) at (2,\xx) {$0$};}
\node [blues] (layer3-2) at (2,2) {$0$};
\draw (layer1-0)--(layer2-0);
\draw (layer1-0)--(layer2-1);
\draw (layer1-0)--(layer2-2);
\draw (layer2-0)--(layer3-0);
\draw (layer2-0)--(layer3-1);
\draw (layer2-1)--(layer3-2);
\draw (layer2-1)--(layer3-3);
\draw (layer2-2)--(layer3-4);
\draw (layer2-2)--(layer3-5);
\end{tikzpicture}
\hspace{0.2em}
\begin{tikzpicture}
    \node[font=\huge] at (0,4.5) {$t=3$};
    \draw (0,-2) -- ++(0,6);
\end{tikzpicture}
\hspace{0.2em}
\begin{tikzpicture}[
reds/.style={circle,draw,fill=blue!10,minimum size=20},
redsv/.style={circle,draw,line width=0.8mm,fill=blue!10,minimum size=20},
greens/.style={circle,draw,fill=green!10,minimum size=20},
greensv/.style={circle,draw,line width=0.8mm,fill=green!10,minimum size=20},
blues/.style={circle,draw,fill=red!10,minimum size=20},
bluesv/.style={circle,draw,line width=0.8mm,fill=red!10,minimum size=20}, nodenode/.style={star,fill=yellow, fill opacity=0.5, text opacity=1,draw=none, minimum size=20}]
\node [reds] (layer1-0) at (0,2.5) {};
\node [nodenode] at (layer1-0) {$2$};

\node [greens] (layer2-0) at (1,1.5) {$0$};
\node [greensv] (layer2-1) at (1,2.5) {};
\node [nodenode] at (layer2-1) {$2$};

\node [greens] (layer2-2) at (1,3.5) {$0$};
\node [blues] (layer3-5) at (2,5) {$0$};
\foreach \xx in {0,...,1}
{\pgfmathtruncatemacro{\label}{\xx}
    \node [blues] (layer3-\xx) at (2,\xx) {$0$};}
\foreach \xx in {3,...,5}
{\pgfmathtruncatemacro{\label}{\xx}
    \node [blues] (layer3-\xx) at (2,\xx) {$0$};}
\node [bluesv] (layer3-2) at (2,2) {};
\node [nodenode] at (layer3-2) {$0$};

\draw (layer1-0)--(layer2-0);
\draw  (layer1-0)--(layer2-1);
\draw (layer1-0)--(layer2-2);
\draw (layer2-0)--(layer3-0);
\draw (layer2-0)--(layer3-1);
\draw [line width=0.8mm,->] (layer2-1)--(layer3-2);
\draw (layer2-1)--(layer3-3);
\draw (layer2-2)--(layer3-4);
\draw (layer2-2)--(layer3-5);
\end{tikzpicture}
\hspace{0.2em}
\begin{tikzpicture}[
reds/.style={circle,draw,fill=blue!10,minimum size=20},
redsv/.style={circle,draw,line width=0.8mm,fill=blue!10,minimum size=20},
greens/.style={circle,draw,fill=green!10,minimum size=20},
greensv/.style={circle,draw,line width=0.8mm,fill=green!10,minimum size=20},
blues/.style={circle,draw,fill=red!10,minimum size=20},
bluesv/.style={circle,draw,line width=0.8mm,fill=red!10,minimum size=20}, nodenode/.style={star,fill=yellow, fill opacity=0.5, text opacity=1,draw=none, minimum size=20}]
\node [reds] (layer1-0) at (0,2.5) {};
\node [nodenode] at (layer1-0) {$2$};

\node [greens] (layer2-0) at (1,1.5) {$0$};
\node [greensv] (layer2-1) at (1,2.5) {};
\node [nodenode] at (layer2-1) {$2$};

\node [greens] (layer2-2) at (1,3.5) {$0$};
\node [blues] (layer3-5) at (2,5) {$0$};
\foreach \xx in {0,...,1}
{\pgfmathtruncatemacro{\label}{\xx}
    \node [blues] (layer3-\xx) at (2,\xx) {$0$};}
\foreach \xx in {3,...,5}
{\pgfmathtruncatemacro{\label}{\xx}
    \node [blues] (layer3-\xx) at (2,\xx) {$0$};}
\node [bluesv] (layer3-2) at (2,2) {};
\node [nodenode] at (layer3-2) {$1$};

\draw (layer1-0)--(layer2-0);
\draw  (layer1-0)--(layer2-1);
\draw (layer1-0)--(layer2-2);
\draw (layer2-0)--(layer3-0);
\draw (layer2-0)--(layer3-1);
\draw [line width=0.8mm,->] (layer2-1)--(layer3-2);
\draw (layer2-1)--(layer3-3);
\draw (layer2-2)--(layer3-4);
\draw (layer2-2)--(layer3-5);
\end{tikzpicture}
\hspace{0.2em}
\begin{tikzpicture}[
reds/.style={circle,draw,fill=blue!10,minimum size=20},
redsv/.style={circle,draw,line width=0.8mm,fill=blue!10,minimum size=20},
greens/.style={circle,draw,fill=green!10,minimum size=20},
greensv/.style={circle,draw,line width=0.8mm,fill=green!10,minimum size=20},
blues/.style={circle,draw,fill=red!10,minimum size=20},
bluesv/.style={circle,draw,line width=0.8mm,fill=red!10,minimum size=20}, nodenode/.style={star,fill=yellow, fill opacity=0.5, text opacity=1,draw=none, minimum size=20}]
\node [reds] (layer1-0) at (0,2.5) {};
\node [nodenode] at (layer1-0) {$2$};

\node [greens] (layer2-0) at (1,1.5) {$0$};
\node [greens] (layer2-1) at (1,2.5) {};
\node [nodenode] at (layer2-1) {$2$};

\node [greens] (layer2-2) at (1,3.5) {$0$};
\node [blues] (layer3-5) at (2,5) {$0$};
\foreach \xx in {0,...,1}
{\pgfmathtruncatemacro{\label}{\xx}
    \node [blues] (layer3-\xx) at (2,\xx) {$0$};}
\foreach \xx in {3,...,5}
{\pgfmathtruncatemacro{\label}{\xx}
    \node [blues] (layer3-\xx) at (2,\xx) {$0$};}
\node [blues] (layer3-2) at (2,2) {};
\node [nodenode] at (layer3-2) {$2$};

\draw (layer1-0)--(layer2-0);
\draw  (layer1-0)--(layer2-1);
\draw (layer1-0)--(layer2-2);
\draw (layer2-0)--(layer3-0);
\draw (layer2-0)--(layer3-1);
\draw (layer2-1)--(layer3-2);
\draw (layer2-1)--(layer3-3);
\draw (layer2-2)--(layer3-4);
\draw (layer2-2)--(layer3-5);
\end{tikzpicture}
\hspace{0.2em}

}
\caption{Example of Algorithm \ref{alg:async} with $L=2$, following the notations in Figure \ref{fig:tree}. Numbers represent the number of updates that has been applied to each node. Only nodes with bold boundaries are visited, and only the starred nodes are the relevant output. At stage $1$, $\xi_1 = 1$ is revealed, then $X_1^{(1)}(1)$, $X_2^{(1)}(1,3)$, and $X_1^{(2)}(1)$ are computed (left 4 figures). At stage $2$, $\xi_2 = 2$ is revealed, then $X_2^{(1)}(1,2)$, $X_3^{(1)}(1,2,3)$, and $X_2^{(2)}(1,2)$ are computed (middle 4 figures). At stage $3$, $\xi_3 = 3$ is revealed, then $X_3^{(1)}(1,2,3)$ are $X_3^{(2)}(1,2,3)$ are computed (right 3 figures).}\label{fig:update}
\end{figure}

\subsection{Efficient online MDSA}\label{sec:online_MDSA}

To evaluate the sequence $\overline{X}_t(\bx_{1:t}(w),\bz_{S(t)}(w_{\internal})$ in an online fashion, the overall updates become Algorithm \ref{alg:async}, where at time $t$, we apply Algorithm \ref{alg:update-procedure} to evaluate $X^{(L)}_t(\bx_{1:t}(w))$, which along the way evaluates ${X}^{(l)}_t(\bx_{1:t}(w))$ for $l=1,\ldots,L-1$. Then, the output is the weighted average of ${X}^{(0:L)}_t(\bx_{1:t}(w))$. Its correctness and complexity are stated in Theorem \ref{thm:main-async}. 

\begin{algorithm}
\caption{Efficient online MDSA}\label{alg:async}
\begin{algorithmic}
\Require At $t=1,\ldots,T-s$, $\bx_t(w) = \xi_t$ is revealed. At $t=1$, $\memo^{(init)}\subset \memo$. $L$, the number of updates required. 
\Ensure At (the end of) $t = 1,\ldots,T$, $X_t(\xi_{1:t})\in \memo$.
\For{$t = 1,\ldots,T$}
\State Evaluate $X_t^{(L)}(\xi_{1:t})$ using Algorithm \ref{alg:update-procedure}
\State Compute $X_t(\xi_{1:t})$ using \eqref{eq:ergodic-u}, $\ww{X_t(\xi_{1:t})}$, $\del{X_{t-1}(\xi_{1:(t-1)})}$
\State If $t\geq 2$, $\del{X_{t-1}^{(1:L)}(\xi_{1:(t-1)})}$
\EndFor
\end{algorithmic}
\end{algorithm}

\begin{theorem}\label{thm:main-async}
The updates in Algorithm \ref{alg:async} are valid: when $X_t^{(L)}(\xi_{1:t})$ is evaluated, all the needed information is available. Denoting the shared memory space at the start of Algorithm \ref{alg:async} as $\overline{\memo}_0$ and at the end of stage $t$ as $\overline{\memo}_t$, then 
\begin{equation}\label{eq:memo_update}
    \overline{\memo}_t= \overline{\memo}_0\cup \{X_{t}^{(1:L)}(\xi_{1:t}), X_t(\xi_{1:t})\}.
\end{equation}
That is, $\overline{\memo}_t\setminus\overline{\memo}_0$ contains $L+1$ vectors in $\mc X_t$. 

The oracle complexity for the stochastic conditional gradient and the proximal update is $O(T \cdot \min(2^L,L^T))$.

Further assuming that all vectors in $\mc X_t$ can be stored with $B$ bits, then $\memo\setminus\overline{\memo}_0$ is no more than $O(L^2B)$ throughout the algorithm.

\end{theorem}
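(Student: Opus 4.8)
The plan is to establish all three assertions by a single induction on the stage index $t$, treating the subroutine $\mc P_{t',L}$ as a black box via Lemma~\ref{lm:procedure-prop}. A useful preliminary remark is that $\overline{\memo}_0$ (which contains $\memo^{(init)}$) is never modified: the deletions in Algorithm~\ref{alg:async} only remove computed states $S^{(1:L)}_{\cdot}(\cdot)$, and each $\mc P$ call only discards the intermediate data it created. Hence it suffices to track $\memo\setminus\overline{\memo}_0$, and to show that at the end of stage $t$ it equals the set displayed in \eqref{eq:memo_update}.

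For the base case $t=1$, the calls $\mc P_{t',L}$ are made for $t'=1,\dots,s+1$ in increasing order of $t'$, each applied to every node $[w']_{t'}\subset[w^*]_{1}$. First I would check the preconditions inductively on $t'$: $\mc P_{1,L}$ requires only $\memo^{(init)}\subset\memo$; for $t'\geq 2$, $\mc P_{t',L}$ on $[w']_{t'}$ requires $S^{(0:(L-1))}_{t'-1}([w']_{t'-1})\in\memo$ for the parent $[w']_{t'-1}$, which holds because $S^{(0)}_{t'-1}([w']_{t'-1})\in\memo^{(init)}$ and $S^{(1:L)}_{t'-1}([w']_{t'-1})$ was written when $\mc P_{t'-1,L}$ ran on that parent. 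By the ``end-state'' part of Lemma~\ref{lm:procedure-prop}, each call adds exactly $S^{(1:L)}_{t'}([w']_{t'})$, so after stage $1$ the memory is $\overline{\memo}_0\cup\{S^{(1:L)}_{t'}([w']_{t'}):[w']_{t'}\subset[w^*]_1,\ t'=1,\dots,s+1\}$, which is \eqref{eq:memo_update}.

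For the inductive step, assume \eqref{eq:memo_update} at the end of stage $t-1$. Stage $t$ first deletes $S^{(1:L)}_{t'}([w']_{t'})$ for $[w']_{t'}\subset[w^*]_{t-1}\setminus[w^*]_{t}$, $t'=t,\dots,t+s-1$. The crucial point is that every node $[w']_{t+s}\subset[w^*]_t$ and each of its ancestors in layers $t,\dots,t+s-1$ lie inside $[w^*]_t$, hence are not among the deleted nodes; in particular the parent $[w']_{t+s-1}$ still has $S^{(1:L)}_{t+s-1}([w']_{t+s-1})\in\memo$ (layer $t+s-1=(t-1)+s$ is in range by the inductive hypothesis) and $S^{(0)}_{t+s-1}([w']_{t+s-1})\in\memo^{(init)}$, so the precondition of $\mc P_{t+s,L}$ is met. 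Since $\mc P_{t+s,L}$ only reads, never deletes, the parent states, applying it to the several nodes $[w']_{t+s}\subset[w^*]_t$ in turn causes no interference, and each call adds only $S^{(1:L)}_{t+s}([w']_{t+s})$. After the first deletion the surviving states are $S^{(1:L)}_{t-1}([w^*]_{t-1})$ (the unique layer-$(t-1)$ node) together with $\{S^{(1:L)}_{t'}([w']_{t'}):[w']_{t'}\subset[w^*]_{t},\ t'=t,\dots,t+s-1\}$; adding the new layer-$(t+s)$ states and then deleting $S^{(1:L)}_{t-1}([w^*]_{t-1})$ in the last line of stage $t$ leaves exactly $\overline{\memo}_0\cup\{S^{(1:L)}_{t'}([w']_{t'}):[w']_{t'}\subset[w^*]_t,\ t'=t,\dots,t+s\}$, i.e., \eqref{eq:memo_update}. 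The main (if essentially routine) obstacle here is precisely this bookkeeping: verifying that the parent-node precondition of $\mc P_{t+s,L}$ survives the first deletion, and that the net effect of the two deletions and the insertions reproduces \eqref{eq:memo_update}.

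The two complexity bounds then follow by counting. In a tree with at most $d$ children per node, the subtree rooted at $[w^*]_t$ has at most $d^k$ nodes $k$ levels down, hence at most $\sum_{k=0}^{s}d^k=O(d^s)$ nodes within depth $s$. By Lemma~\ref{lm:procedure-prop}, each call $\mc P_{\cdot,L}$ costs at most $2^L$ oracle calls and at most $O(L^2B)$ bits of scratch memory beyond the states it writes. At stage $1$ we make $\sum_{t'=1}^{s+1}d^{t'-1}=O(d^s)$ calls, and at each stage $t\geq 2$ at most $d^s$ calls (one per layer-$(t+s)$ node in $[w^*]_t$); summing over $t=1,\dots,T-s$ gives oracle complexity $O(T\,2^L d^s)$. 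For space, \eqref{eq:memo_update} gives $|\overline{\memo}_t\setminus\overline{\memo}_0|\leq O(d^s)\cdot L\cdot B$ at the end of every stage, and within a stage the only extra usage is the $O(L^2B)$ scratch space of the single $\mc P_{\cdot,L}$ call in progress together with the at most $d^sLB$ bits of states already written by completed calls of that stage; hence $\memo\setminus\overline{\memo}_0=O(d^sLB+L^2B)=O(d^sL^2B)$ throughout, using $d^sL+L^2\leq 2d^sL^2$.
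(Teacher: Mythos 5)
Your proposal is correct and follows essentially the same route as the paper's proof: induction on the stage $t$ using Lemma \ref{lm:procedure-prop} as a black box to verify preconditions and track what each $\mc P_{\cdot,L}$ call adds to $\memo$, then counting at most $d^0+\cdots+d^s+(T-s-1)d^s\leq Td^s$ calls of cost $2^L$ each for the oracle bound, and splitting the space usage into the $O(d^sLB)$ end-of-stage states plus the $O(L^2B)$ in-progress scratch. Your bookkeeping of the deletions in the inductive step is, if anything, slightly more explicit than the paper's.
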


\begin{proof}[Proof of Theorem \ref{thm:main-async}]
For the first claim, when $t=1$, the evaluation procedure can be called since $\memo^{(init)}\subset \memo$. By the end of stage $1$, \eqref{eq:memo_update} follows from Lemma \ref{lm:procedure-prop}. Now suppose the first claim is true for stage $1,\ldots,t-1$, then at the beginning of stage $t$, the memory space is $\overline{\memo}_{t-1}$, and so the $X_t^{(L)}(\xi_{1:t)})$ can be evaluated, which adds $X_t^{(1:L)}(\xi_{1:t)})$ to the memory (Lemma \ref{lm:procedure-prop}). The delete operation removes all evaluated variables for $t-1$. Thus, the first claim holds for $t$. By induction, it holds for all $t=1,\ldots,T-s$.

For the second claim, from Lemma \ref{lm:procedure-prop}, each $X_t(\xi_{1:t})$ requires at most $\min(2^L-1,L^{T-t+1})$ oracles. Thus, the total number of oracles is upper bounded by $O(T \cdot \min(2^L,L^T))$. Since each gradient oracle is used in one proximal update, the number of updates is also $O(T \cdot \min(2^L,L^T))$. 

For the third claim,  By \eqref{eq:memo_update}, $\overline{\memo}_t\setminus \overline{\memo}_0$ requires at most $O(LB) $ bits. Notice that at stage $t$, $X_t^{(L)}(\xi_{1:t})$ is evaluated, which requires at most $O(L^2B)$ bits of additional memory by Lemma \ref{lm:procedure-prop}, we have $\memo\setminus \overline{\memo}_t$ is no more than $O(LB +L^2B)$ throughout stage $t$. Thus, $\memo\setminus \overline{\memo}_0$ requires no more than $O(LB +L^2B)$ bits. 
\end{proof}

\textbf{Remark.} Due to the asynchronous updates in our Algorithm \ref{alg:async}, the initialization $X_{t'}^{(0)}(\xi_{1:t})$ and the conditional sampler $\widehat{\bx}_{t+1}$ (i.e. the conditional distribution of $\widehat{\bx}_{t+1}|\bx_{1:t} = \xi_{1:t}$ are needed at stage $t$, and so they can be given in an online fashion as well.

Finally, combining the in-expectation guarantees from Theorem \ref{thm:MDSA-1} with the Algorithm \ref{alg:async}, we have the following convergence guarantee. 
\begin{corollary}\label{cor:overall}
    Assume that the assumptions in Theorem \ref{thm:MDSA-1} hold, then the implementation in Algorithm \ref{alg:async} achieves the following
\begin{displaymath}
    \overline{\E}[\gapu(\overline{X})]\leq \frac{\sqrt{2}\tilde{D}\tilde{L}}{\sqrt{L+1}}+bD,
\end{displaymath}
where $\overline{X}$ is defined as $\overline{X}_t(\xi_{1:t},\zeta_{S(t)}) = X_t(\xi_{1:t})$ when $\bz = \zeta$. In addition, with $L = \epsilon^{-2}$, the oracle complexity is $O(T\min(2^{1/\epsilon^2},\epsilon^{-2T}))$ while the space complexity is $O(\epsilon^{-4}B)$. 
\end{corollary}

In particular, when the sampling is unbiased, i.e. $b = 0$, further assuming $\tilde{D} \leq D_0\sqrt{T}$, $\tilde{L} \leq L_0\sqrt{T}$ where $D_0 = O(1)$ and $L_0 = O(1)$, the suboptimality $\overline{\E}[\gapu(\overline{X})]\leq \sqrt{2}D_0L_0T\cdot \epsilon = O(\epsilon T)$. Thus, to find an $O(\epsilon T)$ suboptimal solution, in the regime where $\epsilon = \Omega(1/\sqrt{T})$, the oracle complexity is $O(T2^{1/\epsilon^2})$, and if $\epsilon = O(1/\sqrt{T})$, the oracle complexity is $O(T\epsilon^{-2T})$.

Similar results hold for \eqref{eq:MD2} and \eqref{eq:update_acc_inexact}.

\section{Numerical experiments}\label{sec:num_exp}

We apply our mirror descent stochastic approximation \eqref{eq:MD1} (denoted as MDSA) and \eqref{eq:update_acc_inexact} (denoted as A-MDSA) to a smoothed online convex optimization problem, and \eqref{eq:MD2} (denoted as MDSA) to a revenue management problem. 

Our experiment results demonstrate that the proposed mirror descent stochastic approximation algorithms converge in a variety of settings: convex optimization with and without strong convexity, and saddle point problems. In addition, they admit the following advantages: applicable even when the randomness across stages is \textit{correlated}, robustness against \textit{misspecified} sampling distribution in constructing the gradients, efficiency for large $T$.

All experiments are implemented using Python and run on MacBook Air with the M3 chip.

\subsection{Smoothed online convex optimization}\label{sec:exp-soco-gen}
We consider a smoothed online convex optimization problem, modeled as \eqref{eq:obj_unconstrained-online} with
\begin{equation}\label{eq:soco}
     f_t(x_{t-1},x_t,\xi_t): = h(\|x_t - (\theta_t+\overline{\xi}_t)\|_2) + \frac{1}{2} \|x_t - x_{t-1}\|_2^2,\quad \xi_t = (\overline{\xi}_t,\epsilon_t).
\end{equation}
The objective function \eqref{eq:soco} appears in tracking problems \cite{LiLi2020}, where the goal is to decide the positions $x_1,\ldots,x_T$ in order to minimize the distance from the moving target ($\theta_t + \overline{\xi}_t$ at stage $t$) and the moving cost (functions of $\|x_{t}-x_{t-1}\|_2$). 

In the experiment, we take $x_0 = \mb 0$  and $h:[0,\infty)\to \R$ is either the strongly convex quadratic cost $h_{\text{quad}}(s) = s^2/2$, or the huber cost $h_{\text{huber}}(s) = \begin{cases}
    s^2/2~ &|s|\leq 1\\
    |s| - 1/2~ &|s|>1 
\end{cases}$, which is convex but not strongly convex. $n_t=10$ and $\mc X_t = \{x_t\in \R^{n_t}, ~\|x_t\|_2\leq 10\}$ is the ball with radius $10$ for all $t$. For $i=1,\ldots,10$, $\theta_{t,i} = 7.5\sin(2\pi \cdot (1+\frac{i-1}{100})t)$ is a known sequence. 

For the stochastic process $\bx_{1:t}$, we assume that the $\overline{\bx}_{1:T}$ component is an auto-regressive process with discount factor $\rho = 0.8$, such that $\overline{\bx}_1 = \boldsymbol{\epsilon}_1$, and $\overline{\bx}_{t} = \rho \overline{\bx}_{t-1} + \boldsymbol{\epsilon}_t$ for $t\geq 2$. For the stochastic process $\boldsymbol{\epsilon}_{1:T}$, we take the (true) distribution of  $\boldsymbol{\epsilon}_t$ conditioning on $\bx_{1:(t-1)}=\xi_{1:(t-1)}$ to be a uniform distribution over a (finite) set $E_t(\xi_{1:(t-1)})$. These sets $E_t(\xi_{1:(t-1)})$ will be specified in Sections \ref{sec:soco-exp} and \ref{sec:exp_soco_T} below. For convenience, we denote this uniform distribution as $\pi_t(\xi_{1:(t-1)})$.

\textbf{Misspecified distributions.} To test robustness of our algorithms against biases in the conditional scenario samplers $\widehat{\bx}_t$, given $\xi_{1:(t-1)}$, we sample $\boldsymbol{\epsilon}_t$ using the perturbed distribution $\hat{\pi}_t(\xi_{1:(t-1)}) = (1-\delta)\pi_t(\xi_{1:(t-1)}) + \delta d_t(\xi_{1:(t-1)})$ where $d_t(\xi_{1:(t-1)})$ is a probability distribution and $\delta\in [0,1]$. Thus, $d_t$ represents the misspecification in the distribution of $\boldsymbol{\epsilon}_t$. We consider $2$ types of perturbation $d_t$: for $d^{(0)}_t$, for each $\epsilon_t\in E_{t}(\xi_{1:(t-1)})$, we generate a uniform random variable in $[0,1]$, independently. Then, we normalize them such that their sum is $1$. For $d^{(1)}_t$, we pick one element in $E_{t}(\xi_{1:(t-1)})$ uniformly at random and set the corresponding component in $d_t$ to be $1$, and all rests are set to $0$. Thus, type $d^{(1)}_t$ can be viewed as a more adversarial perturbation. We generate one $d_t$ for each $\xi_{1:(t-1)}$ independently.

\textbf{Algorithms setup.} We use $v_t(x_t) = \frac{1}{2}\|x_t\|_2^2$ as the distance generating functions. For the accelerated updates \eqref{eq:update_acc_inexact}, we use the $\alpha_l,A_l$ as defined in \eqref{eq:update_acc_inexact} with $\gamma = 1$ and $\theta = 1/2$. For $h= h_{\text{quad}}$, we take $\mu = 1$ and $L_2 = 3$. For $h=h_{\text{huber}}$, we take $\mu = 0$ and $L_2 = 3$. 

Additional experiments show that the performance of (A-)MD(SA) algorithms is similar to that presented below, under a variety of parameter settings for $\delta$, $\rho$, and $\alpha$ when $h= \alpha h_{\text{quad}}$ and $h= \alpha h_{\text{huber}}$. We omit these results due to space constraints. 

\subsubsection{$T = 5$ and each $\boldsymbol{\epsilon}_t$ has $10$ possible realizations conditioned on $\xi_{1:(t-1)}$}\label{sec:soco-exp}

We generate $E_t(\xi_{1:(t-1)})$ such that $|E_t(\xi_{1:(t-1)})| = 10$ in the following manner: we first sample a vector $\epsilon_1$ under $\mc N(\mb 0,16 I)$ and set $E_1 = \{\epsilon_1\}$; then we generate $10$ $\epsilon_2\sim_{i.i.d.} \mc N(\mb 0,16 I)$ independent of $\epsilon_1$, and $E_2((\epsilon_1,\epsilon_1))$ is the set of these $10$ random vectors\footnote{Recall that $\xi_1 = (\overline{\xi}_1,\epsilon_1) = (\epsilon_1,\epsilon_1)$.}; then for each $(\epsilon_1,\epsilon_2)$, we generate $10$ $\epsilon_3\sim_{i.i.d.} \mc N(\mb 0,16 I)$ independent of $(\epsilon_1,\epsilon_2)$, and set $E_3(\xi_{1:2})$ to be these $10$ vectors, and so on for $t=4,5$. 

For the updates \eqref{eq:MD1}, we take the step size $\gamma_l = 1/\sqrt{L}$. We test two types of initialization: $X_t^{(0)} = \mb 0$ and $X_t^{(0)} = \theta_t$. We generate $5$ realizations of the random seeds $\zeta^{(1)},\ldots,\zeta^{(5)}$, and in Figure \ref{fig:soco-exp}, we present the averages of $\{\E[f(\overline{X}^{(l)}_{1:T}(\bx_{1:T},\zeta^{(i)}),\bx_{1:T})],~i=1,\ldots,5\}$ as ``MDSA (mean)'' (initialization is $X_t^{(0)} = \mb 0$) and ``MDSA (mean), init'' (initialization is $X_t^{(0)} = \theta_t$). Here $\overline{X}^{(l)}$ is the weighted average of $X^{(0:l)}_{1:T}$ using weights $\gamma_{0:l}$ in Algorithm \ref{alg:hypo-ms-u}. In addition, we also run \eqref{eq:MD1} with the \textit{exact} gradient computed under $\hat{\pi}_t$ ((MD) and (MD, init)) and under ${\pi}_t$ ((MD (true dist)) and (MD (true dist), init)). We present the results for the accelerated updates, with the same setup. We point out that the first output of the accelerated updates is given by \eqref{eq:update_acc_inexact}, and so is not necessarily $X_t^{(0)}$.
\begin{figure}[htbp]
    \centering
    \begin{subfigure}[t]{0.48\textwidth}
        \centering
        \includegraphics[width = \textwidth]{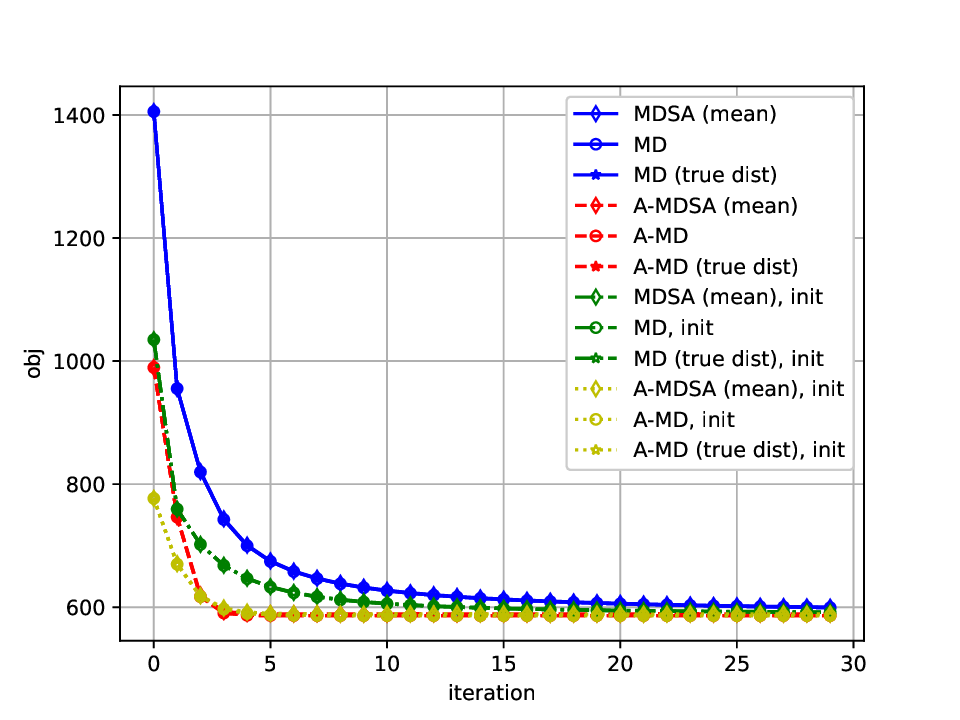}
        \caption{$(0.1,d^{(0)}_t)$, $ h_{\text{quad}}$}
    \end{subfigure}%
    \begin{subfigure}[t]{0.48\textwidth}
        \centering
        \includegraphics[width = \textwidth]{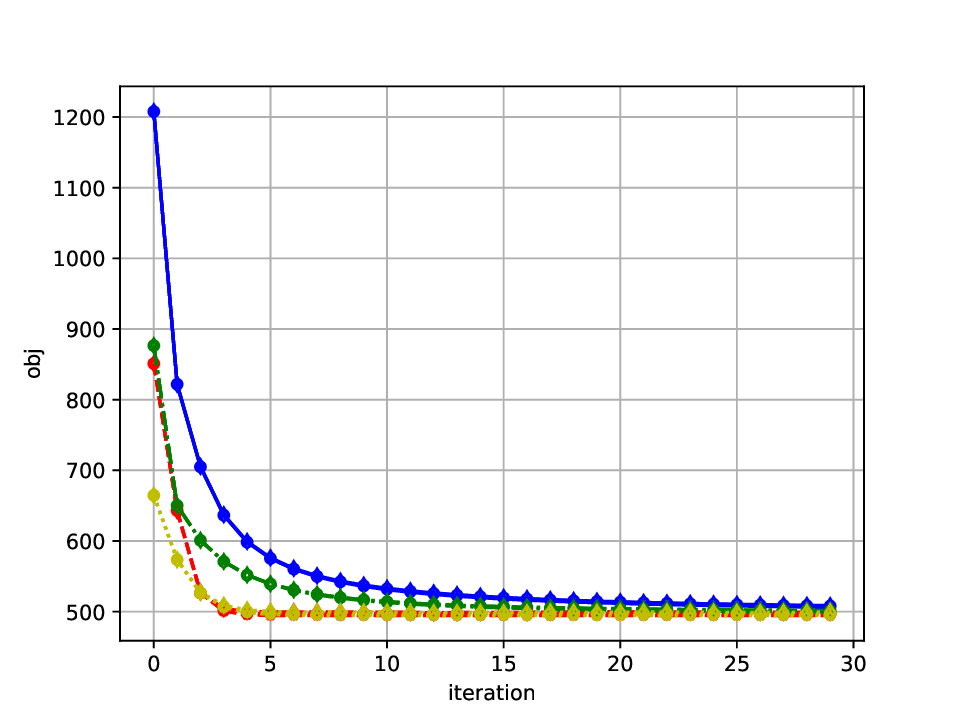}
        \caption{$(0.5,d^{(1)}_t)$, $ h_{\text{quad}}$}
    \end{subfigure}
    
    \begin{subfigure}[t]{0.48\textwidth}
        \centering
        \includegraphics[width = \textwidth]{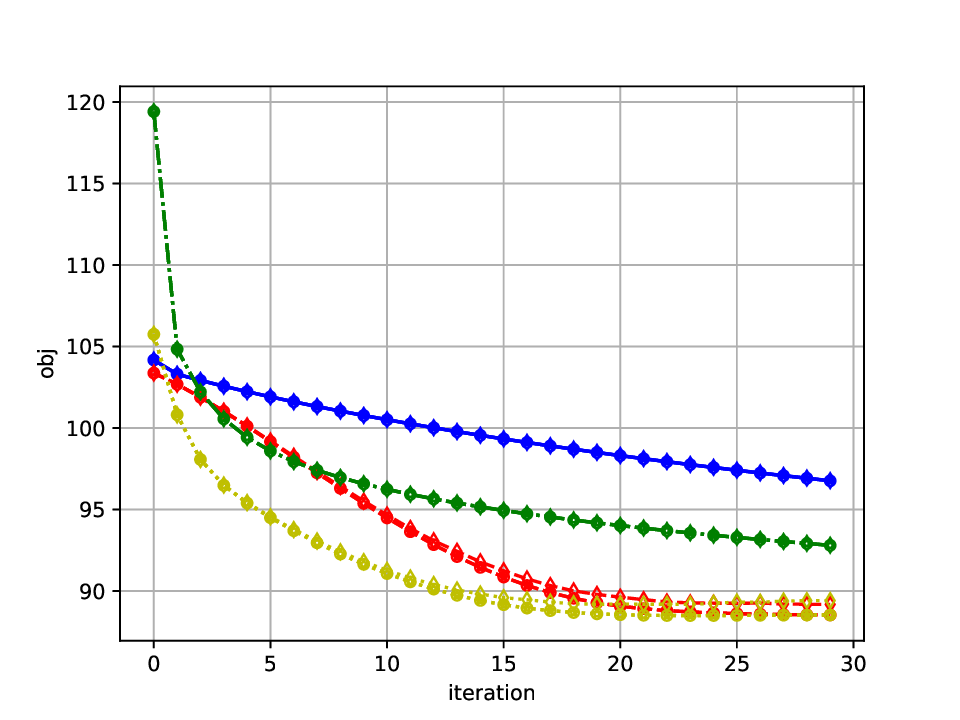}
        \caption{$(0.1,d^{(0)}_t)$, $ h_{\text{huber}}$}
    \end{subfigure}%
    \begin{subfigure}[t]{0.48\textwidth}
        \centering
        \includegraphics[width = \textwidth]{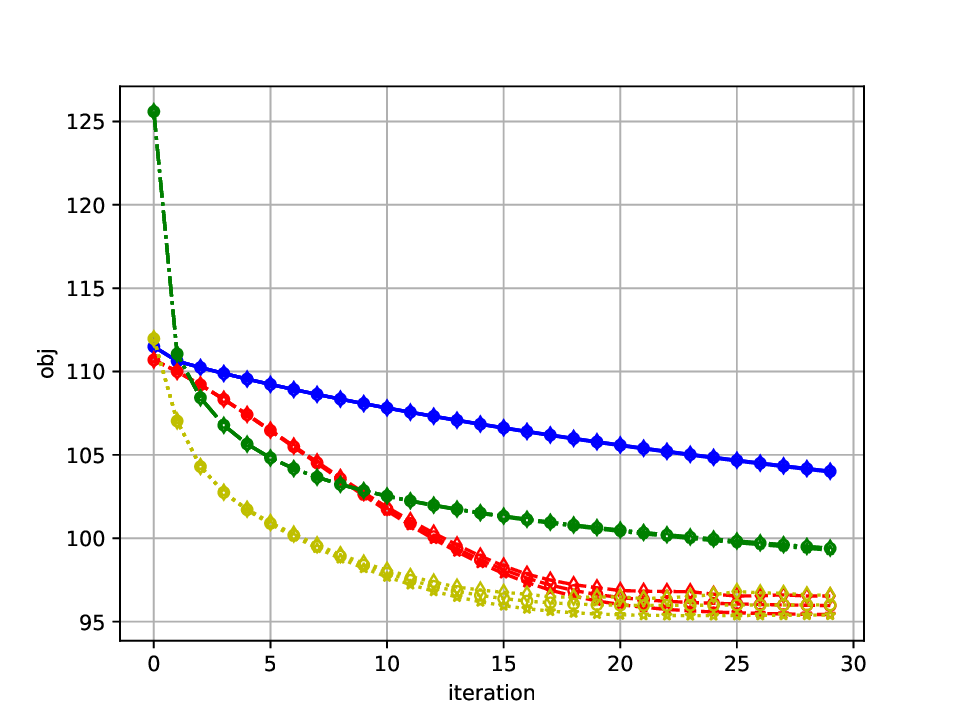}
        \caption{$(0.5,d^{(1)}_t)$, $ h_{\text{huber}}$}
    \end{subfigure}
        \caption{Objective values $\E[f(X_{1:T}(\bx_{1:T}),\bx_{1:T})]$ for the smoothed online convex optimization problems corresponding to $h$, under different settings of $\{(\delta,d_t),h\}$. MDSA (Algorithm \ref{alg:hypo-ms-u}) and A-MDSA (Algorithm \ref{alg:hypo-ms-acc}) are applied with inexact sampling distributions $\hat{\pi}_t = (1-\delta) \pi_t +\delta d_t$. MD and MD (true dist) use exact gradients computed under $\hat{\pi}_t$ and $\pi_t$ respectively, and similarly for A-MD and A-MD (true dist). The ``init'' means the initialization is at $\theta_t$ for $X_t$; otherwise the initialization is $\mb 0$. Legends for (b,c,d) are the same as the legend for (a). 
        }\label{fig:soco-exp}
\end{figure}

\textbf{Effects of stochastic gradients.} 
As expected, with the stochastic gradient oracle (MDSA and A-MDSA), the total running time is approximately $ 80\sim 90$ seconds, while with the exact gradient ((A-)MD, (A-)MD (true dist)), the total running time is approximately $140\sim 150$ seconds. Thus, the running time is significantly reduced if the stochastic gradient is used, without compromising the convergence by too much. In terms of the convergence of objective values, notice that one key difference between stochastic gradients and full gradients is the noise term $\sigma_t$, which is $0$ for full gradients, and which depends on the variance of the gradient for stochastic gradients. This is reflected by Figure \ref{fig:soco-exp} (c) and (d), where A-MDSA is converging to a \textit{slightly} larger value than A-MD and A-MD (true dist) for both initialization. 

\textbf{Correlation between randomness.} For both the quadratic loss and the huber loss, we test our algorithms for $\rho = 0.8$. With this correlated sequence $\bx$, as shown by all setups in Figure \ref{fig:soco-exp}, all of our algorithms converge within $30$ iterations, or show a trend of convergence (MD(SA) for huber loss). Indeed, our theoretical results do not assume independence between randomness in different stages.

\textbf{Robustness against misspecification.} Comparing the results for A-MD and A-MD (true dist) for both initializations in Figure \ref{fig:soco-exp} (d), we see that A-MD is converging to a \textit{slightly} larger value than A-MD (true dist). However, in all other settings, the bias in the sampling distribution does not have a noticeable effect on the convergence.

\textbf{Strong convexity.} Comparing the results for $h_{\text{quad}}$ and $h_{\text{huber}}$, we see that for both the accelerated and the non-accelerated updates, strongly convex objectives converge faster for all our algorithms. For the update \eqref{eq:update_acc_inexact}, this agrees with our theoretical results; for the update \eqref{eq:MD1}, this suggests that the suboptimality in Theorem \ref{thm:MDSA-1} could be loose, and tighter bounds could be attained with strong convexity.

\textbf{Acceleration.} As expected, in all our settings, when the initializations are the same, A-MD(SA) converges faster than MD(SA): for $h_{\text{quad}}$, A-MD(SA) converges in $\sim 5$ iterations, MD(SA) converges in $\sim 15$ iterations; for $h_{\text{huber}}$, A-MD(SA) converges in $\sim 20$ iterations, MD(SA) does not reach convergence in $30$ iterations.

\subsubsection{ $T=30,50$ and each $\boldsymbol{\epsilon}_t$ has $50$ possible realizations conditioned on $\xi_{1:(t-1)}$}\label{sec:exp_soco_T}

We test our MDSA and A-MDSA using the efficient online updates in Algorithm \ref{alg:async} for the smoothed online convex optimization problem with $T = 30,50$. We choose $E_t(\xi_{1:(t-1)})=E$ to be the same, where $|E|=50$, and $E$ consists of $50$ i.i.d. random vector with distribution $\mc N(\mb 0, 16 I)$.

We take $\delta = 0.2$ and $d_t^{(0)}$ as the perturbation to the sampling distribution. In addition, if the procedure Algorithm \ref{alg:update-procedure} is applied to the same scenario $\xi_{1:t}$ more than once, the sampling distributions are perturbed by two independent random $d_t^{(0)}$'s. For \eqref{eq:MD1}, we use $\gamma = 3/\sqrt{L}$, and we use $X_t^{(0)} = \mb 0$ as initialization.  

\begin{figure}[htbp]
    \centering
    \begin{subfigure}[t]{0.48\textwidth}
        \centering
        \includegraphics[width = \textwidth]{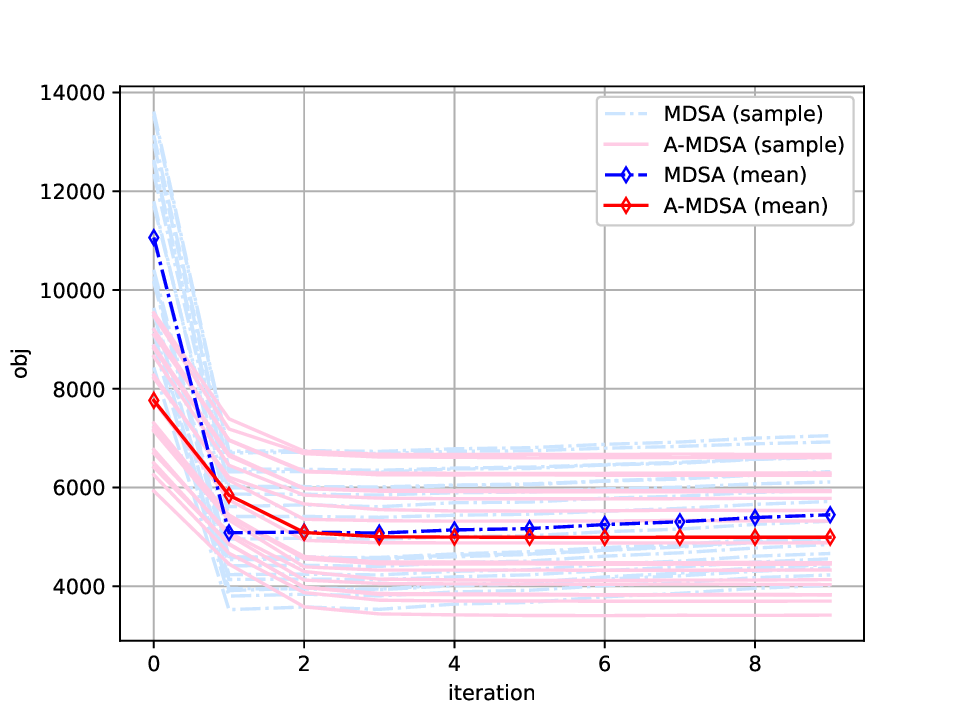}
        \caption{$T = 30$, $ h_{\text{quad}}$. }
    \end{subfigure}
    \begin{subfigure}[t]{0.48\textwidth}
        \centering
        \includegraphics[width = \textwidth]{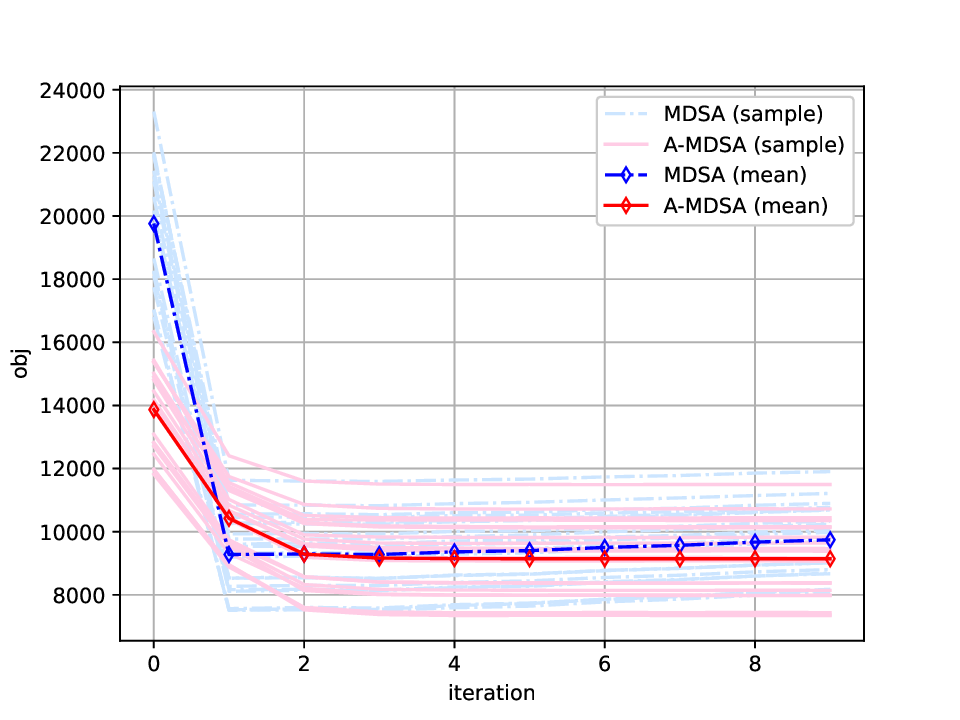}
        \caption{$T = 50$, $ h_{\text{quad}}$. }
    \end{subfigure}   
    
    \begin{subfigure}[t]{0.48\textwidth}
        \centering
        \includegraphics[width = \textwidth]{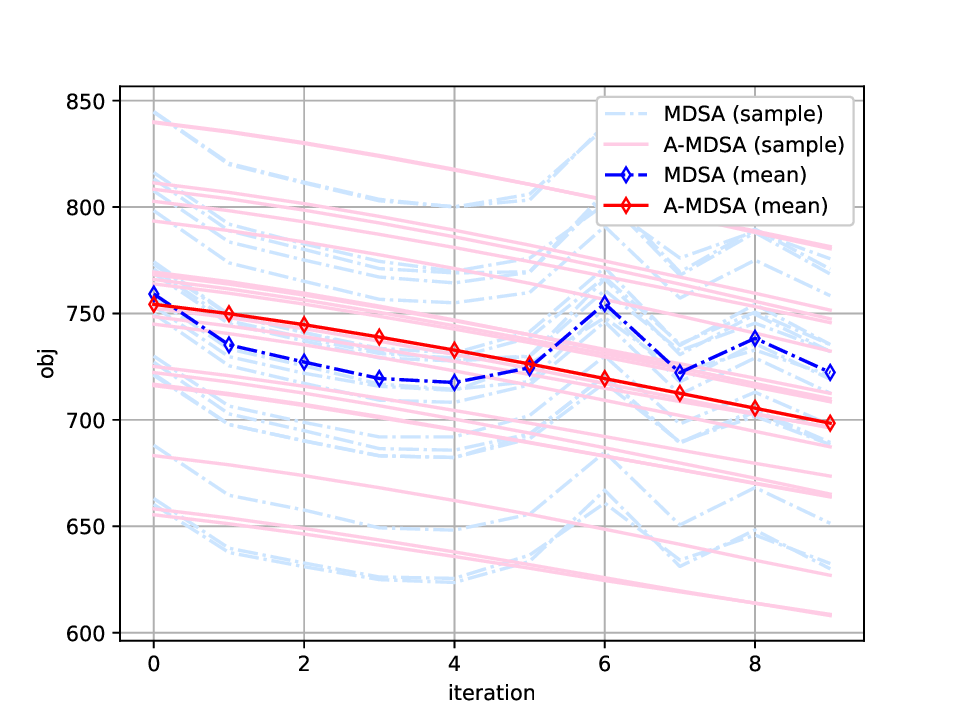}
        \caption{$T = 30$, $h_{\text{huber}}$. }
    \end{subfigure}
    \begin{subfigure}[t]{0.48\textwidth}
        \centering
        \includegraphics[width = \textwidth]{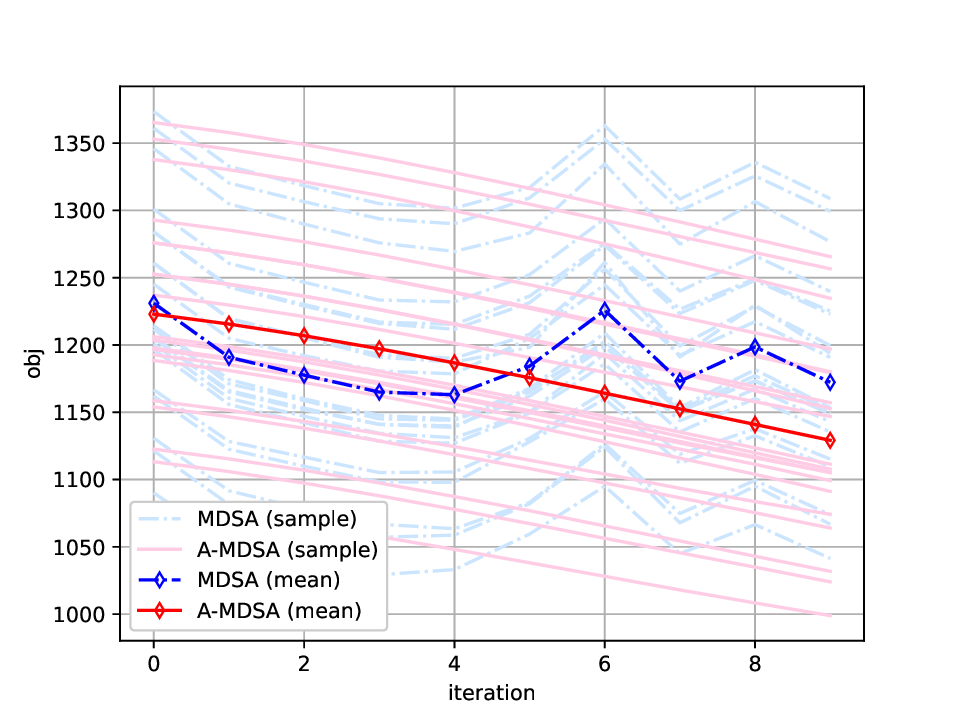}
        \caption{$T = 50$, $h_{\text{huber}}$. }
    \end{subfigure}
    \caption{ MDSA and A-MDSA using the online Algorithm \ref{alg:async}, applied to the smoothed online convex optimization problem corresponding to $h$, with inexact sampling distribution $\hat{\pi}_t = 0.8\pi_t +0.2 d_t^{(0)}$. The light blue lines represent the objective value $f(\overline{X}^{(l)}(\xi,\zeta),\xi)$ for $20$ generated sequence $\xi$ when running MDSA, and the dark blue line is their average. Similarly, the light red lines represent $f(X_+^{(l)}(\xi,\zeta),\xi)$ for $20$ generated sequence $\xi$ when using A-MDSA, and the dark red lines are their averages.}\label{fig:soco-exp-long-T}
\end{figure}

Our experiments show that even when $T$ and the number of child nodes are large, our algorithms are very efficient: the running time for the entire $T$ stages is approximately $ 20$ seconds for $T = 30$ , and approximately $45$ seconds for $T = 50$. 

In Figure \ref{fig:soco-exp-long-T}, the light blue lines represent the objective value $f(\overline{X}^{(l)}(\xi^{(i)},\zeta),\xi^{(i)})$ for $20$ generated sequence $\{\xi^{(i)},~i=1,\ldots,20\}$ when running MDSA. Here $\overline{X}^{(l)}$ is the weighted average of $X^{(0:l)}(\xi,\zeta)$ using weights $\gamma_{0:l}$ in Algorithm \ref{alg:hypo-ms-s}. We also plot their averages in dark blue. Similarly, the light red lines represent $f(X_+^{(l)}(\xi^{(i)},\zeta),\xi^{(i)})$ for $20$ generated sequence $\xi$ when using A-MDSA, and the dark red lines are their averages. 

As revealed in Figure \ref{fig:soco-exp-long-T}, the quadratic loss converges within $5$ iterations, while the huber loss does not reach convergence within $10$ iterations, but the objective values show a decreasing trend. These agree with our theoretical results in Corollary \ref{cor:overall}.

\subsection{Revenue management}\label{sec:exp_rm}
We consider the following revenue management problem:
\begin{displaymath}
    \max_{x_1\in [0,1]}\cdots \max_{x_T\in [0,1]} \sum_{t=1}^T c_tx_t,\quad s.t. \sum_{t=1}^T\mb a_tx_t\leq \mb b_0.
\end{displaymath}
Here $\mb b_0 \in (0,\infty)^M$ denotes the budget, and $c_t\geq 0$, $\mb a_t\geq \mb 0$ denote the revenue and the resource consumed when $x_t  =1$. If all pairs $(c_t,\mb a_t)$ are known, then the problem is a linear programming problem. However, when $(c_t,\mb a_t)$ are random variables which are revealed at stage $t$, the problem becomes a sequential decision making process with stochasticity, which we model using the framework of \eqref{eq:minmax-online}.

More precisely, we consider a problem with $T = 5$, $M = 10$, and $\mb b_0= 10\cdot \mb 1\in \R^M$.  Each $\bx_t = (c_t,\mb a_t)$ (here $c_t,\mb a_t$ are random variables/vectors), conditioned on $\bx_{1:(t-1)} = \xi_{1:(t-1)}$, is sampled uniformly in $E_t(\xi_{1:(t-1)})$, where $E_t(\xi_{1:(t-1)})$ consists of $10$ i.i.d. vectors uniformly sampled in $[1,5]^M\times [0,2]$. 

Let $\mc X_t=[0,1]\times \{\tilde{\mb b}\in \R^M,~\mb 0\leq \tilde{\mb b} \leq \mb b_0\}$, then we consider the following problem: 
\begin{align}\label{eq:rm_exp}
    &\quad \max_{(X_1,B_1)\in \mc P_1(\mc X_1),\ldots,(X_T,B_T)\in \mc P_T(\mc X_T)}\sum_{t=1}^T\E[ \mb c_tX_t(\bx_{1:t})]\\
    & s.t.~\mb a_t X_t(\bx_{1:t}) + B_t(\bx_{1:t}) \leq B_{t-1}(\bx_{1:(t-1)}),~ t=1,\ldots,T .\nonumber
\end{align}

Following the discussion in Section \ref{sec:lagrangian}, we consider the following saddle point reformulation, with $\mc Y_t = [0,5]^{M}$, 
\begin{equation}\label{eq:rm_exp_saddle}
    \phi_t(x_{t-1},b_{t-1},x_t, b_t,\mb y_t,\xi_t) := -c_tx_t  + \langle \mb y_t,\mb a_t x_t + b_t- b_{t-1}\rangle,\quad \xi_t = (c_t,\mb a_t)
\end{equation}

\textbf{Algorithms setup.} We use $\gamma = 5$, and MDSA is applied with $\gamma_l = \gamma/\sqrt{L}$ for $L=100$. The sampling distributions are perturbed by $d_t^{(0)}$, with $\delta = 0.2$. We use $0$ and $\mb 0$ as the initialization for all $X_t(\xi_{1:t}), Y_t(\xi_{1:t})$. For $B_t$, we consider two types of initialization: $B_t^{(0)} = \mb 0$, and $B_t^{(0)} = 2T(1-t/T)\cdot \mb 1$. To differentiate them, we add ``init" in the legends of the second type.

\textbf{Evaluation.} We use $\overline{Z}^*_{1:T}$, the $500$ iteration output of MD (with $\gamma_l = \gamma/\sqrt{500}$), as an approximation to the solution to the saddle point problem. The algorithms are evaluated through the (approximate) gaps\footnote{The approximate optimal solution $\overline{Z}^*_{1:T}$ used is the one corresponding to the same initialization. That is, the blue and green curves in Figure \ref{fig:rm_exp2}(a) are evaluated using the $500$-th iteration output initialized at $B_t^{(0)} = \mb 0$ and at $B_t^{(0)} = 2T(1-t/T)\cdot \mb 1$, respectively. }, objective function values, and the total budgets spent. 

We test MDSA and (MDSA, init) for $5$ runs. For MDSA, the running time is $\sim 220$ seconds, while for MD, the running time is $\sim 390$ seconds. In Figure \ref{fig:rm_exp2}, we present the mean of the evaluation metrics for $5$ runs using blue and green lines, representing initialization at $B_t^{(0)} = \mb 0$ and $B_t^{(0)} = 2T(1-t/T)\cdot \mb 1$ respectively.

\begin{figure}[htbp]
    \centering
    \begin{subfigure}[t]{0.32\textwidth}
        \centering
        \includegraphics[width = \textwidth]{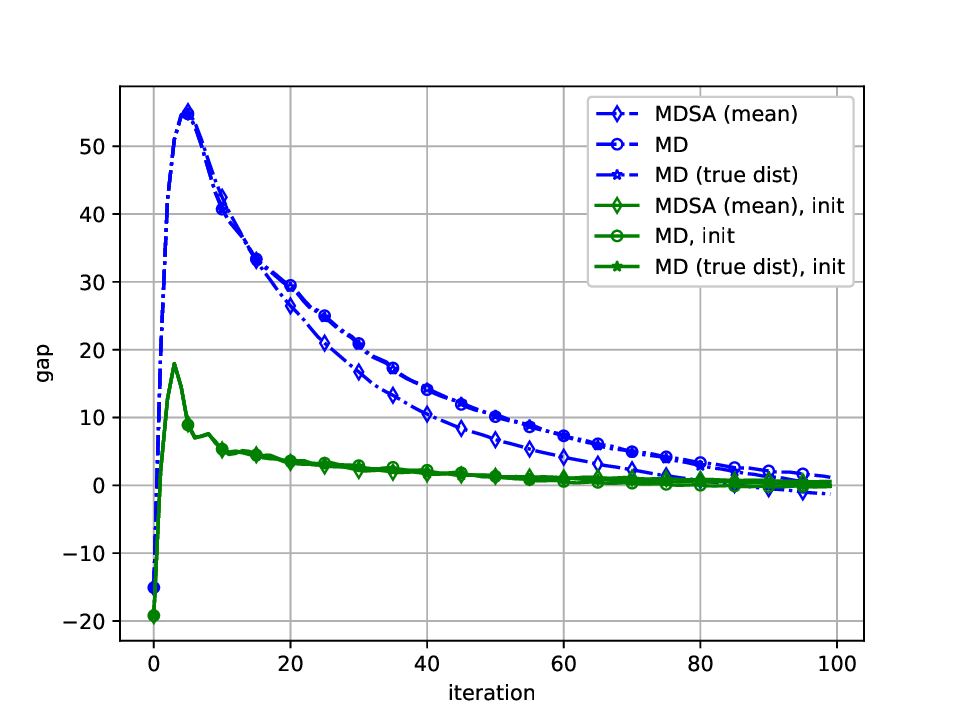}
        \caption{gap}
    \end{subfigure}%
    \begin{subfigure}[t]{0.32\textwidth}
        \centering
        \includegraphics[width = \textwidth]{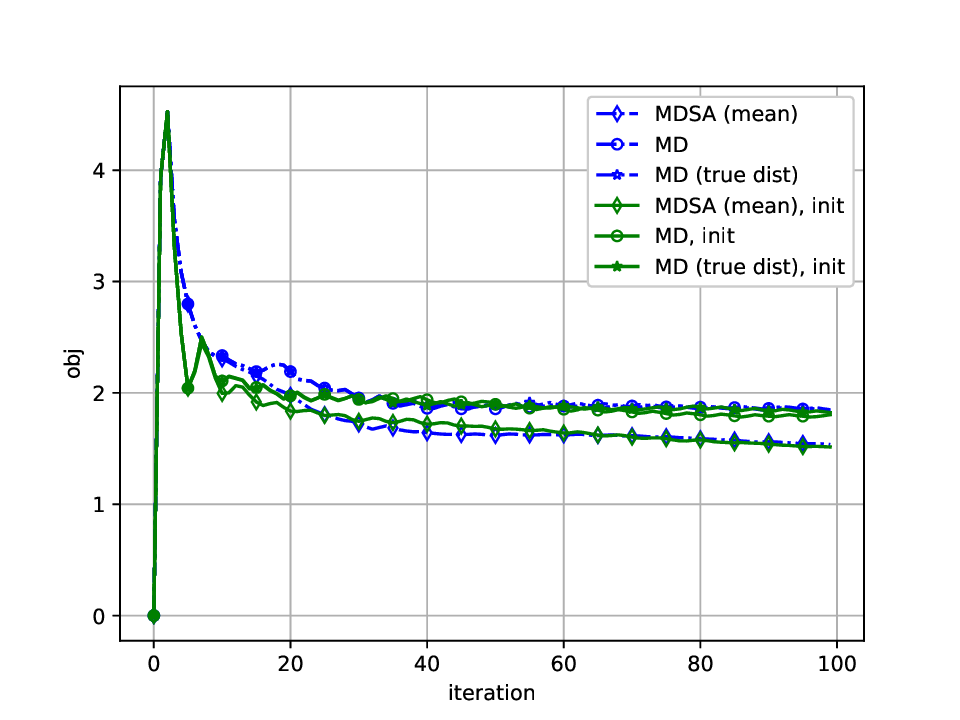}
        \caption{objective}
    \end{subfigure}
    \begin{subfigure}[t]{0.32\textwidth}
        \centering
        \includegraphics[width = \textwidth]{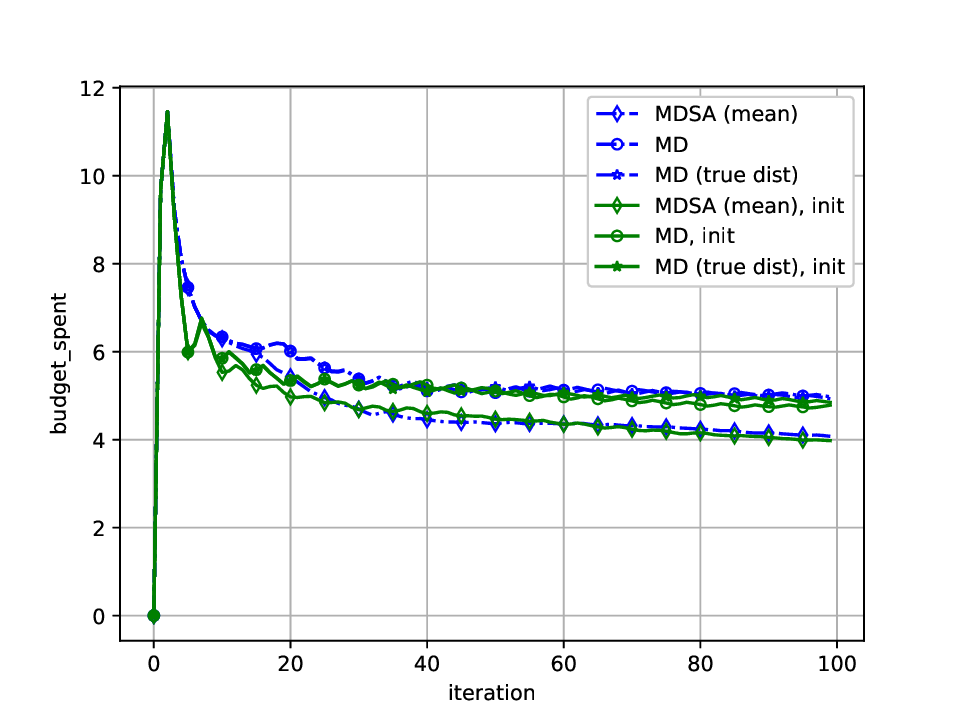}
        \caption{budget spent}
    \end{subfigure}
     \caption{Revenue management using MDSA Algorithm \ref{alg:hypo-ms-s}. Averages over $5$ runs for the gap: $\E[\phi(\overline{X}_{1:T}^{(l)},\overline{Y}_{1:T}^*)] - \E[\phi(\overline{X}_{1:T}^{*},\overline{Y}_{1:T}^{(l)})]$, objective: $\E[\sum_{t=1}^T c_t \overline{X}_{t}^{(l)}]$, budget spent: $\|\E[\sum_{t=1}^T \mb a_t \overline{X}_{t}^{(l)}]\|_{\infty}$. The ``init'' means the initialization $B_t^{(0)} = 2T(1-t/T)\cdot \mb 1$ is used, otherwise $B_t^{(0)} = \mb 0$.}\label{fig:rm_exp2}
\end{figure}

\textbf{Convergence of the (approximate) gap $\E[\phi(\overline{X}_{1:T}^{(l)}(\bx),\overline{Y}_{1:T}^*(\bx))] - \E[\phi(\overline{X}_{1:T}^{*}(\bx),\overline{Y}_{1:T}^{(l)}(\bx))]$.} In Figure \ref{fig:rm_exp2}(a), for both initializations, the term $\E[\phi(\overline{X}_{1:T}^{(l)}(\bx),\overline{Y}_{1:T}^*(\bx))] - \E[\phi(\overline{X}_{1:T}^{*}(\bx),\overline{Y}_{1:T}^{(l)}(\bx))]$ converges to $0$ when the exact gradients are used (MD and MD( true dist)), and to a slightly negative value when stochastic gradients are used (MDSA). This agrees with the \textit{upper bound} for $\gaps$ in Theorem \ref{thm:MDSA-2}. To understand why the (approximate) gap can be negative, notice that 
\begin{align*}
&\E[\phi(\overline{X}^{(l)}(\bx),\overline{Y}^*(\bx))] - \E[\phi(\overline{X}^{*}(\bx),\overline{Y}^{(l)}(\bx))] =A_1-A_2,\\
& A_1 = \E[\phi(\overline{X}^{(l)}(\bx),\overline{Y}^*(\bx))] - \min_{X}\E[\phi(X(\bx),\overline{Y}^*(\bx))] \\
&\quad \quad \quad+ \max_{Y}\E[\phi(\overline{X}^{*}(\bx),{Y})(\bx)] -\E[\phi(\overline{X}^{*}(\bx),\overline{Y}^{(l)}(\bx))],\\
&A_2 =  \max_{Y}\E[\phi(\overline{X}^{*}(\bx),{Y}(\bx))] -\min_{X}\E[\phi(X_{1:T}(\bx),\overline{Y}^*(\bx))].
\end{align*}
where we omit all subscripts $1:T$ (e.g. $\overline{X}_{1:T}^{(l)}= \overline{X}^{(l)}$), and $\min_X$ is over $X_t\in \mc P_t(\mc X_t)$ for $t=1,\ldots,T$ and similarly for $\max_Y$. Thus, although $A_1\geq 0$, due to the suboptimality of $(\overline{X}_{1:T}^*, \overline{Y}_{1:T}^*)$, the term $A_2$ could be negative. In fact, by Lemma \ref{lm:MD-saddle} and Theorem \ref{thm:MDSA-2}, with $\gamma_l = \gamma/\sqrt{500}$, $A_2\sim 1/\sqrt{500}$.

\textbf{Objective values $\E[\sum_{t=1}^T \mb c_t \overline{X}_{t}^{(l)}(\bx_{1:t})]$ and budget spent $\|\E[\sum_{t=1}^T \mb a_t \overline{X}_{t}^{(l)}(\bx_{1:t})]\|_{\infty}$.} From Figure \ref{fig:rm_exp2} (b) and (c), we see that for all settings, during the first 2 iterations, the objective values are increasing, and at iteration $2$, the budget spent exceeds the budget $\mb b_0 = 10 \cdot \mb 1$, thereby the constraints are violated\footnote{The violation of the constraints \textit{in expectation} implies that there exists at least one scenario such that the constraints are violated. }. After that, objective values and the constraints are converging to values that do not depend on the initialization or the bias in the gradient sampling distribution. However, the values depend on whether the stochastic gradients are used. This suggests that randomness in stochastic approximations may slightly compromise solution quality.

\section{Conclusion}
In this work, we study the unconstrained \eqref{eq:obj_unconstrained-online} and the saddle point \eqref{eq:minmax-online} variant of the multi-stage stochastic programming problems. We show the convergence of the (accelerated) mirror descent stochastic approximation with stochastic conditional gradient oracles. 

To further improve complexities, we propose an online framework in which only a single trajectory of decisions is needed. This turns the infinite-dimensional optimization problem in the policy space into a tractable problem with finite-dimensional outputs. Moreover, we show that the decomposability of the MDSA updates enables efficient online implementation, leading to algorithms whose oracle complexities scale linearly in $T$, substantially improving upon existing approaches that exhibit exponential dependence.

The online updating mechanism we propose highlights great potential in coupling the algorithms with the underlying stochastic process to achieve improved efficiency. More broadly, our results suggest that, even in high- or infinite-dimensional settings, solutions for appropriately structured subsets of decision variables can be computed efficiently. We believe this is a promising direction for future research.

\textbf{Acknowledgements}
This work was funded by the Office of Naval Research grant N00014-24-1-2470.

\printbibliography
\newpage

\appendix

\section{Discussions regarding the setup}

\subsection{Comparison with classical MSSP formulation}\label{sec:lagrangian}

Recall that in the classical MSSP formulation \cite{Shapiro2021Lectures}, consecutive decision variables are coupled through the constraints:
\begin{align}\label{eq:obj_constrained}
     &\inf_{X_t\in \mc P_t(\mc X_t),~t=1,\ldots,T} \E[\sum_{t=1}^T h_{t}(\mb x_t,\bx_t)],\\
     &s.t.~\mb x_t = X_t(\bx_{1:t}),\quad t=1,2,\ldots,T,\nonumber\\
     &\quad\quad  g_1(\mb x_1(w),\bx_1(w))\leq \mb 0,\quad  w\in \Omega,\nonumber\\
     &\quad \quad g_t(\mb x_{t-1}(w),\mb x_t(w),\bx_t(w))\leq \mb 0,\quad t = 2,\ldots,T,~  w\in \Omega,\nonumber
\end{align}
where $h_t:\R^{n_t}\times \Xi_t\to \R$ and each component of $g_t:\R^{n_{t-1}}\times \R^{n_t}\times \Xi_t\to \R^{m_t}$ ($n_0=0$) are convex in $x_t$ and $(x_{t-1},x_t)$ respectively. Under regularity conditions (such as Proposition 3.6 in \cite{Shapiro2021Lectures} for linear constraints), \eqref{eq:obj_constrained} can be reformulated as a saddle point problem \mssp, with $\phi_1(x_1,y_1,\xi_1) = h_1(x_1,\xi_1) + \langle y_1,g_1(x_1,\xi_1)\rangle $ and for $t = 2,\ldots,T$, $\phi_{t}(x_{t-1},x_t,y_t,\xi_t) = h_t(x_t,\xi_t) + \langle y_t,g_t(x_{t-1},x_t,\xi_t)\rangle$, with the caveat that $\mc Y_t = \R_{\geq 0}^{m_t}$ is unbounded. 

Nevertheless, recall that for a convex $h:\mc X\to \R$ and $g:\mc X\to \R^{m}$ which is convex for each component, for the Lagrangian $\mc L(x,y) = h(x) + \langle y,g(x)\rangle$, for any nonempty convex $\widetilde{\mc Y}\subset \R^m_{\geq 0}$, denoting $(x^*,y^*)$ as an optimal solution to $\inf_{x'\in \mc X} \sup_{y'\in \R^m_{\geq 0}}\mc L(x',y')$ and assuming strong duality holds. We have 
\begin{align*}
    \sup_{y'\in \widetilde{\mc Y}}\mc L(x,y')-\inf_{x'\in \mc X}\mc L(x',y)&= \sup_{y'\in \widetilde{\mc Y}}\mc L(x,y')-\mc L(x^*,y^*) + \mc L(x^*,y^*)-\inf_{x'\in \mc X}\mc L(x',y)\\
    &\geq  \sup_{y'\in \widetilde{\mc Y}}\mc L(x,y')-\mc L(x^*,y^*)\\
    &=h(x)-h(x^*) + \sup_{y'\in \widetilde{\mc Y}}\langle y',g(x)\rangle, 
\end{align*}
where $\geq$ is because by strong duality $\mc L(x^*,y^*) =  \sup_{y'\in \R^m_{\geq 0}}\inf_{x'\in \mc X}\mc L(x,y)\geq \inf_{x'\in \mc X}\mc L(x,y)$; the second equality is because $\mc L(x^*,y^*) = h(x^*)$. In particular, if $\widetilde{\mc Y} = [0,C]^m$, then 
\begin{equation*}
    \sup_{y'\in \widetilde{\mc Y}}\mc L(x,y')-\inf_{x'\in \mc X}\mc L(x',y)\geq h(x) - h(x^*) + C\sum_{i=1}^m [g_i(x)]_+,
\end{equation*}
where $[a]_+ = \max(a,0)$. Thus, applying this to each $\xi_{1:T}\in \Xi_{1:T}$ with $\mc Y_t = [0,C]^{m_t}$, then taking expectation, we get the following relation
\begin{equation*}
    \E[h(\mb x,\bx) - h(\mb x^*,\bx)] + C\cdot \sum_{t=1}^T\sum_{i=1}^m \E[[g_{t,i}(\mb x_{t-1},\mb x_t,\bx_t)]_+]\leq \gaps(Z_{1:T}).
\end{equation*}
In other words, if $\gaps(Z_{1:T})\leq \epsilon$, then $ \E[h(\mb x,\bx) - h(\mb x^*,\bx)]\leq \epsilon$, and $\sum_{t=1}^T\sum_{i=1}^m \E[[g_{t,i}(\mb x_{t-1},\mb x_t,\bx_t)]_+]\leq \epsilon/C$. 

\subsection{Comparison with metrics for the first stage decision variable}\label{sec:compare-stage-one} 

Since the policy spaces are infinite dimensional, it is impossible to even write down all solutions. One way to circumvent this is to look at first stage decision only: since $\Xi_1 = \{\xi_1\}$, the policy for stage $1$ is determined by $X_1(\xi_1)=x_1\in \R^{n_1}$, a finite dimensional vector. To measure how good a first stage decision is, past works (as represented by the dynamic stochastic approximation algorithm) have looked at the suboptimality w.r.t. the objective $f_1(x_1,\xi_1) + V_1(x_1)$, where for \eqref{eq:obj_unconstrained-online}
\begin{displaymath}
    V_1(x_1)=\inf_{X_t\in \mc P_t(\mc X_t),t=2,\ldots,T}\E[\sum_{t=2}^Tf_t(X_{t-1}(\bx_{1:(t-1)}),X_t(\bx_{1:t}),\bx_t)], \quad X_1(\xi_1) = x_1, 
\end{displaymath}
is the cost-to-go function. Our metric $\gapu$ for all stages $X_{1:T}$ is stronger, in the sense that given any policy $X$ for \eqref{eq:obj_unconstrained-online} with $X_1(\xi_1) = x_1$, we have  
\begin{displaymath}
    f_1(x_1,\xi_1) + V_1(x_1) \leq f_1(x_1,\xi_1) + \E[\sum_{t=2}^Tf_t(X_{t-1}(\bx_{1:(t-1)}),X_t(\bx_{1:t}),\bx_t)] = \E[f(X(\bx),\bx)].
\end{displaymath}
Thus, $f_1(x_1,\xi_1) + V_1(x_1) - \E[f(X^*(\bx),\bx)]\leq \gapu(X)$, or, the suboptimality guarantee for $X$ automatically becomes the guarantee for $X_1(\xi_1)$. Similar relation holds for \eqref{eq:minmax-online}.

\section{Acceleration based on inexact oracle}\label{appendix:acc}

Recall that $\mb x_t^{(l)}(w) = X_t^{(l)}(\bx_{1:t}(w))$, $\mb x_{t+}^{(l)}(w) = X_{t+}^{(l)}(\bx_{1:t}(w))$, $\mb x_{t-}^{(l)}(w) = X_{t-}^{(l)}(\bx_{1:t}(w))$, and $\mb g_t^{(l)}(w) = G_t^{(l)}(\bx_{1:t}(w))$. To abbreviate notation, we use $\mb x^{(l)} = (\mb x^{(l)}_1,\ldots,\mb x_T^{(l)})$ and similarly for $\mb x^{(l)}_{\pm}$ and $\mb g^{(l)}$. We denote $\mc X = \prod_{t=1}^T \mc X_t$. In addition, we define the following mapping $\psi_l:\{\Omega\to \R^{\sum_{t=1}^T n_t}\}\to \{\Omega\to \R^{\sum_{t=1}^T n_t}\}$,
\begin{align*}
    \psi_{l}(\mb x)(w)&:= (1+\gamma)L_2v(\mb x(w)
    ) + \sum_{l'=0}^{l} \alpha_{l'}(f(\mb x^{(l')}(w),\bx(w)) \\
    &\quad +\langle \mb g^{(l')}(w),\mb x(w)-\mb x^{(l')}(w)\rangle + \frac{(1-\theta)\mu}{2}\|\mb x(w)-\mb x^{(l')}(w)\|^2).
\end{align*}

Below, for two random variables $W_1,W_2:\Omega\to \R$, by $W_1\geq W_2$, we mean $W_1(w)\geq W_2(w)$ for all $w\in \Omega$. 

\begin{lemma}\label{lm:general_inexact}
Under the assumptions in Lemma \ref{lm:acc-inexact-multi-stage}, for any $
\mb x,\mb x':\Omega\to \mc X$, 
\begin{equation}\label{eq:condition-convex-smooth}
    \frac{\mu}{2}\E[\|\mb x - \mb x'\|^2]\leq \E[f(\mb x',\bx) - f(\mb x,\bx) - \langle \nabla f(\mb x,\bx),\mb x'-\mb x\rangle] \leq \frac{L_2}{2}\E[\|\mb x -\mb x'\|^2]. 
\end{equation}
Assume that $\nabla v(\mb x^{(0)}(w)) = \mb 0$, then for all $l\geq 0$, we have $A_lf(\mb x_+^{(l)},\bx)\leq \phi_l(\mb x_-^{(l)})  + E_l,$
where $E_l = \sum_{l'=0}^l A_{l'}\delta_{l'} $. 
\begin{displaymath}
    \delta_0 =\langle \mb g^{(0)} - \nabla f(\mb x^{(0)},\bx),\mb x^{(0)}-\mb x_+^{(0)}\rangle - \frac{\gamma L_2}{2}\|\mb x_+^{(0)}- \mb x^{(0)}\|^2,
\end{displaymath}
and for $l\geq 0$, $\delta_{l+1} = \hat{\delta}_{l+1} + (1-\tau_l) \tilde{\delta}_{l+1}$ where
\begin{displaymath}
    \hat{\delta}_{l+1} = \langle \nabla f(\mb x^{(l+1)},\bx)- \mb g^{(l+1)},\mb x_+^{(l+1)}-\mb x^{(l+1)}\rangle-\frac{\gamma L_2}{2}\|\mb x_+^{(l+1)}-\mb x^{(l+1)}\|^2,
\end{displaymath}
\begin{displaymath}
    \tilde{\delta}_{l+1} = \langle \mb g^{(l+1)} - \nabla f(\mb x^{(l+1)},\bx) ,\mb x_+^{(l)}-\mb x^{(l+1)}\rangle-\frac{\mu}{2}\|\mb x_+^{(l)} - \mb x^{(l+1)}\|^2.
\end{displaymath}
\end{lemma}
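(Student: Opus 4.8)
\textbf{Proof proposal for Lemma~\ref{lm:general_inexact}.}

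The plan is to prove the inequality $A_l F(y^{(l)}) \le \psi_l(z^{(l)}) + E_l$ by induction on $l$, following the standard ``estimate sequence'' argument for accelerated gradient methods (as in the analysis of Nesterov-type schemes and their inexact-oracle variants, e.g.\ \cite{DEVOLDER2013_strong,lan_optimal_2012}), but carefully tracking the inexactness terms $\delta_{l'}$ that arise because $G^{(l')}$ is used in place of $\nabla F(x^{(l')})$. The key structural facts I would use are: $\psi_l$ is $\bigl((1+\gamma)L_2 + (1-\theta)\mu A_l\bigr)$-strongly convex (sum of the strongly convex $(1+\gamma)L_2 V$ and the quadratic terms), its minimizer over $\mc Q = \overline{\mc X}$ is exactly $z^{(l)} = \underline{X}^{(l)}_{1:T}$ by definition \eqref{eq:update_acc_inexact}, and the defining recursion $(1+\gamma)L_2 + (1-\theta)\mu A_l = (1+\gamma)L_2 \alpha_{l+1}^2 / A_{l+1}$ together with $A_{l+1} = A_l + \alpha_{l+1}$ and $\tau_l = \alpha_{l+1}/A_{l+1}$.

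For the \emph{base case} $l = 0$: here $A_0 = \alpha_0 = 1$, and since $\nabla V(x^{(0)}) = \mb 0$ we have $\psi_0(x) = (1+\gamma)L_2 V(x) + F(x^{(0)}) + \langle G^{(0)}, x - x^{(0)}\rangle + \frac{(1-\theta)\mu}{2}\|x - x^{(0)}\|^2$ and $z^{(0)}$ is its minimizer over $\mc Q$. The point $y^{(0)} = \overline{X}^{(0)}_{1:T}$ is the minimizer of $\langle G^{(0)}, x - x^{(0)}\rangle + \frac{(1+\gamma)L_2}{2}\|x - x^{(0)}\|^2$ over $\mc Q$. I would start from the smoothness upper bound $F(y^{(0)}) \le F(x^{(0)}) + \langle \nabla F(x^{(0)}), y^{(0)} - x^{(0)}\rangle + \frac{L_2}{2}\|y^{(0)} - x^{(0)}\|^2$, rewrite $\nabla F(x^{(0)}) = G^{(0)} + (\nabla F(x^{(0)}) - G^{(0)})$ to expose $-\delta_0$, use the optimality of $y^{(0)}$ to bound $\langle G^{(0)}, y^{(0)} - x^{(0)}\rangle + \frac{(1+\gamma)L_2}{2}\|y^{(0)}-x^{(0)}\|^2 \le \langle G^{(0)}, z^{(0)}-x^{(0)}\rangle + \frac{(1+\gamma)L_2}{2}\|z^{(0)}-x^{(0)}\|^2$ (this uses $\gamma \ge 0$ so that $\frac{L_2}{2}\|y^{(0)}-x^{(0)}\|^2 - \frac{(1+\gamma)L_2}{2}\|y^{(0)}-x^{(0)}\|^2 = -\frac{\gamma L_2}{2}\|y^{(0)}-x^{(0)}\|^2$, which is the quadratic piece of $\delta_0$), and drop the extra nonnegative term $(1+\gamma)L_2 V(z^{(0)}) + \frac{(1-\theta)\mu}{2}\|z^{(0)}-x^{(0)}\|^2 \ge 0$ to arrive at $F(y^{(0)}) \le \psi_0(z^{(0)}) + \delta_0$.

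For the \emph{inductive step}, assuming $A_l F(y^{(l)}) \le \psi_l(z^{(l)}) + E_l$, I would bound $A_{l+1} F(y^{(l+1)})$ as follows. Write $A_{l+1} F(y^{(l+1)}) = \alpha_{l+1} F(y^{(l+1)}) + A_l F(y^{(l+1)})$. For the $A_l F(y^{(l+1)})$ piece, apply convexity $F(y^{(l+1)}) \le F(y^{(l)}) + \langle \nabla F(x^{(l+1)}), y^{(l+1)} - y^{(l)}\rangle$ actually more carefully: use the smoothness/convexity to linearize $F$ at $x^{(l+1)}$, producing terms $\langle \nabla F(x^{(l+1)}), y^{(l+1)} - x^{(l+1)}\rangle$ and $\langle \nabla F(x^{(l+1)}), x^{(l+1)} - y^{(l)}\rangle$ and a $\frac{\mu}{2}$-term from the lower bound in \eqref{eq:condition-convex-smooth}; for the $\alpha_{l+1} F(y^{(l+1)})$ piece, linearize similarly picking up $\langle \nabla F(x^{(l+1)}), y^{(l+1)} - x^{(l+1)}\rangle$ and $\langle \nabla F(x^{(l+1)}), x^{(l+1)} - z^{(l)}\rangle$. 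In every occurrence replace $\nabla F(x^{(l+1)})$ by $G^{(l+1)} + (\nabla F - G^{(l+1)})$; the $G^{(l+1)}$-terms combine, via the convex combination $x^{(l+1)} = \tau_l z^{(l)} + (1-\tau_l) y^{(l)}$, into a linear functional that — together with the strong-convexity lower bound $\psi_l(z^{(l+1)}) \ge \psi_l(z^{(l)}) + \frac{(1+\gamma)L_2+(1-\theta)\mu A_l}{2}\|z^{(l+1)}-z^{(l)}\|^2$ and the smoothness upper bound $F(y^{(l+1)}) \le F(x^{(l+1)}) + \langle \nabla F(x^{(l+1)}), y^{(l+1)}-x^{(l+1)}\rangle + \frac{L_2}{2}\|y^{(l+1)}-x^{(l+1)}\|^2$ and the inductive hypothesis — telescopes to $\psi_{l+1}(z^{(l+1)}) + E_l$, provided the coefficient-matching identity $(1+\gamma)L_2 + (1-\theta)\mu A_l = (1+\gamma)L_2\alpha_{l+1}^2/A_{l+1}$ holds (it does, by hypothesis) so that the quadratic-in-$z^{(l+1)}-z^{(l)}$ terms cancel against the $\frac{(1+\gamma)L_2}{2}$ in the definition of $y^{(l+1)}$ via $\overline{X}^{(l+1)}$. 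The residual error terms that fail to cancel are precisely $\alpha_{l+1}\tilde\delta_{l+1} + \alpha_{l+1}\hat\delta_{l+1} \cdot(\text{appropriate weight})$; after collecting, using $A_{l+1} = A_l + \alpha_{l+1}$ and $\tau_l = \alpha_{l+1}/A_{l+1}$, these assemble into $A_{l+1}(\hat\delta_{l+1} + (1-\tau_l)\tilde\delta_{l+1}) = A_{l+1}\delta_{l+1}$, giving $E_{l+1} = E_l + A_{l+1}\delta_{l+1}$ as claimed. The $-\frac{\gamma L_2}{2}\|y^{(l+1)}-x^{(l+1)}\|^2$ in $\hat\delta_{l+1}$ again originates from the gap between $\frac{L_2}{2}\|\cdot\|^2$ (smoothness) and $\frac{(1+\gamma)L_2}{2}\|\cdot\|^2$ (prox weight), and the $-\frac{\mu}{2}\|y^{(l)}-x^{(l+1)}\|^2$ in $\tilde\delta_{l+1}$ from the strong-convexity lower bound applied between $x^{(l+1)}$ and $y^{(l)}$.

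The main obstacle I anticipate is the bookkeeping in the inductive step: one must carefully split each gradient into $G^{(l+1)}$ plus error \emph{before} invoking the optimality of $z^{(l)}$ (so that the error terms are isolated with the correct weights $\alpha_{l+1}$ versus $A_l$), and then verify that the $G^{(l+1)}$-dependent linear-and-quadratic expression exactly matches the increment $\psi_{l+1} - \psi_l$ up to the minimized quadratic — this is where the defining recursion for $\alpha_{l+1}$ is used and where an off-by-one or a mis-weighted term would break the telescoping. A secondary point to be careful about is that all inner products and norms here are the Euclidean ones on $\R^{n\times K}$ (the ``expected'' inner product), so $V$ is $1$-strongly convex in this norm and the projections in \eqref{eq:update_acc_inexact} are well-defined by the same decomposability argument as in Lemma~\ref{lm:main-MD}; I would note this at the outset. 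Everything else (nonnegativity of dropped terms, convexity of $F$ inherited from convexity of $f(\cdot,w)$ and condition \eqref{eq:condition-convex-smooth-f} which yields \eqref{eq:condition-convex-smooth} by taking expectations) is routine.
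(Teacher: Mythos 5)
Your proposal is correct and follows essentially the same route as the paper's proof in Appendix \ref{appendix:acc}: a Devolder-style estimate-sequence induction in which $\psi_l$ is minimized at $z^{(l)}$ with modulus $(1+\gamma)L_2+(1-\theta)\mu A_l$, the identity $A_l(y^{(l)}-x^{(l+1)})+\alpha_{l+1}(x-x^{(l+1)})=\alpha_{l+1}(x-z^{(l)})$ and the substitution $y=\tau_l x+(1-\tau_l)y^{(l)}$ reduce the minimization to the $\overline{X}^{(l+1)}$ step, and the error terms $\hat{\delta}_{l+1}$, $\tilde{\delta}_{l+1}$ arise exactly where you place them (the $L_2$ vs.\ $(1+\gamma)L_2$ gap and the $\mu$-strong-convexity lower bound at $y^{(l)}$, respectively). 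The weights $A_{l+1}\hat{\delta}_{l+1}+A_l\tilde{\delta}_{l+1}=A_{l+1}\delta_{l+1}$ assemble as you claim via $1-\tau_l=A_l/A_{l+1}$, so the sketch fills in to the paper's argument with only routine bookkeeping.
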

The proof of Lemma \ref{lm:general_inexact} follows closely \cite{DEVOLDER2013_strong}. However, \cite{DEVOLDER2013_strong} considers the case where the gradient inexactness is upper bounded by a constant $\delta$, independent of the query point and the iteration number. In the proof below, we explicitly track the accumulation of the error at each stage.

\begin{proof}[Proof of Lemma \ref{lm:general_inexact}]
\eqref{eq:condition-convex-smooth} follows from \eqref{eq:condition-convex-smooth-f}.

For $l = 0$, first, notice that since $\alpha_0 = 1$ and $v$ is $1$-strongly convex, we have
\begin{align*}
    \psi_0(\mb x) &=  (1+\gamma)L_2v(\mb x) + f(\mb x^{(0)},\bx) + \langle \mb g^{(0)},\mb x-\mb x^{(0)}\rangle + \frac{(1-\theta)\mu}{2} \|\mb x- \mb x^{(0)}\|^2\\
    &\geq \frac{(1+\gamma)L_2}{2}\|\mb x - \mb x^{(0)}\|^2 + f(\mb x^{(0)},\bx) + \langle \mb g^{(0)},\mb x-\mb x^{(0)}\rangle 
\end{align*}
Thus, we have for any $w\in \Omega$
\begin{align*}
    \psi_0(\mb x_-^{(0)})(w)   &= \min_{x\in \mc X} \psi_0(x)(w) \\  &\geq \min_{x\in \mc X} \frac{(1+\gamma)L_2}{2}\|x - \mb x^{(0)}(w)\|^2 + f(\mb x^{(0)}(w),\bx(w)) + \langle \mb g^{(0)}(w),x-\mb x^{(0)}(w)\rangle \\
    & = \frac{(1+\gamma)L_2}{2}\|\mb x_+^{(0)}(w)- \mb x^{(0)}(w)\|^2 + f(\mb x^{(0)}(w),\bx(w)) + \langle \mb g^{(0)}(w),\mb x_+^{(0)}(w)-\mb x^{(0)}(w)\rangle\\
    & = \frac{L_2}{2}\|\mb x_+^{(0)}(w)- \mb x^{(0)}(w)\|^2 + f(\mb x^{(0)}(w),\bx(w)) \\
    &\quad + \langle \nabla f(\mb x^{(0)}(w),\bx(w)),\mb x_+^{(0)}(w)-\mb x^{(0)}(w)\rangle - \delta_0(w)
\end{align*}
where $\delta_0 =\langle \mb g^{(0)} - \nabla f(\mb x^{(0)},\bx),\mb x^{(0)}-\mb x_+^{(0)}\rangle - \frac{\gamma L_2}{2}\|\mb x_+^{(0)}- \mb x^{(0)}\|^2 $. Assume now that the statement holds for some $l\geq 0$. By the update of $\mb x_-^{(l)}$, we have for any $\mb x = \mb x_{1:T}$ where $\mb x_t\in \mc X_t$ is measurable w.r.t. $\mc F_t$ for all $t$
\begin{displaymath}
    \langle (1+\gamma)L_2\nabla v(\mb x_-^{(l)})+ \sum_{l'=0}^{l}\alpha_{l'}\mb g^{(l')}+(1-\theta)\mu \alpha_{l'}(\mb x_-^{(l)} - \mb x^{(l')}),\mb x-\mb x_-^{(l)}\rangle \geq 0. 
\end{displaymath}
Then, the strong convexity of $v$ implies that
\begin{align*}
    L_2v(\mb x) &\geq L_2 v(\mb x_-^{(l)}) + \langle  L_2 \nabla v(\mb x_-^{(l)}), \mb x - \mb x_-^{(l)} \rangle + \frac{ L_2}{2}\|\mb x-\mb x_-^{(l)}\|^2\\
    & \geq   L_2 v(\mb x_-^{(l)}) + \frac{ L_2}{2}\|\mb x-\mb x_-^{(l)}\|^2 \\
    &\quad - (1+\gamma)^{-1}\langle \sum_{l'=0}^{l}\alpha_{l'}\mb g^{(l')}+(1-\theta)\mu\alpha_{l'}(\mb x_-^{(l)} - \mb x^{(l')}),\mb x-\mb x_-^{(l)}\rangle
\end{align*}
Thus we have
\begin{align*}
    \psi_{l+1}(\mb x) &\geq (1+\gamma)L_2 v(\mb x_-^{(l)}) + \frac{(1+\gamma)L_2}{2}\|\mb x-\mb x_-^{(l)}\|^2 \\
    &\quad - \langle \sum_{l'=0}^{l}\alpha_{l'}\mb g^{(l')}+(1-\theta)\mu\alpha_{l'}(\mb x_-^{(l)} - \mb x^{(l')}),\mb x-\mb x_-^{(l)}\rangle \\
    &\quad+ \sum_{l'=0}^{l}\alpha_{l'}(f(\mb x^{(l')},\bx) + \langle \mb g^{(l')},\mb x- \mb x^{(l')}\rangle+\frac{(1-\theta)\mu}{2} \|\mb x- \mb x^{(l')}\|^2)\\
    &\quad + \alpha_{l+1}(f(\mb x^{(l+1)},\bx) +\langle \mb g^{(l+1)},\mb x-\mb x_{l+1}\rangle + \frac{(1-\theta)\mu}{2}\|\mb x - \mb x^{(l+1)}\|^2)
\end{align*}
Using 
\begin{displaymath}
    \langle \mb x_-^{(l)} - \mb x^{(l')} , \mb x_-^{(l)} - \mb x\rangle  = \frac{1}{2}\|\mb x_-^{(l)} - \mb x^{(l')}\|^2 + \frac{1}{2}\|\mb x_-^{(l)} - \mb x\|^2 - \frac{1}{2}\|\mb x - \mb x^{(l')}\|^2,
\end{displaymath}
we have
\begin{align*}
    \psi_{l+1}(\mb x) &\geq (1+\gamma)L_2 v(\mb x_-^{(l)}) +  \frac{(1+\gamma)L_2+A_l(1-\theta)\mu}{2}\|\mb x-\mb x_-^{(l)}\|^2 \\
    &\quad+\sum_{l'=0}^{l}\alpha_{l'}(f(\mb x^{(l')},\bx) + \langle \mb g^{(l')},\mb x_-^{(l)}- \mb x^{(l')}\rangle+\frac{(1-\theta)\mu}{2} \|\mb x_-^{(l)}- \mb x^{(l')}\|^2)\\
    &\quad + \alpha_{l+1}(f(\mb x^{(l+1)},\bx) +\langle \mb g^{(l+1)},\mb x-\mb x^{(l+1)}\rangle + \frac{(1-\theta)\mu}{2}\|\mb x - \mb x^{(l+1)}\|^2)\\
    & = \psi_l(\mb x_-^{(l)}) + \frac{(1+\gamma)L_2+A_l(1-\theta)\mu}{2}\|\mb x-\mb x_-^{(l)}\|^2\nonumber\\
    &\quad +\alpha_{l+1}(f(\mb x^{(l+1)},\bx) +\langle \mb g^{(l+1)},\mb x-\mb x^{(l+1)}\rangle + \frac{(1-\theta)\mu}{2}\|\mb x - \mb x^{(l+1)}\|^2).
\end{align*}
By induction hypothesis, 
\begin{displaymath}
    A_l^{-1}(\psi_l(\mb x_-^{(l)}) +E_l) \geq f(\mb x_+^{(l)},\bx) \geq f(\mb x^{(l+1)},\bx) + \langle \mb g^{(l+1)},\mb x_+^{(l)}-\mb x^{(l+1)}\rangle  - \tilde{\delta}_{l+1}
\end{displaymath}
where
\begin{displaymath}
    \tilde{\delta}_{l+1} = \langle \mb g^{(l+1)} - \nabla f(\mb x^{(l+1)},\bx) ,\mb x_+^{(l)}-\mb x^{(l+1)}\rangle-\frac{\mu}{2}\|\mb x_+^{(l)} - \mb x^{(l+1)}\|^2.
\end{displaymath}
Thus we have
\begin{align*}
    \psi_{l+1}(\mb x) &\geq  A_l(f(\mb x^{(l+1)},\bx) + \langle \mb g^{(l+1)},\mb x_+^{(l)}-\mb x^{(l+1)}\rangle - \tilde{\delta}_{l+1}) - E_l \\
    &\quad + \frac{(1+\gamma)L_2+A_l(1-\theta)\mu}{2}\|\mb x-\mb x_-^{(l)}\|^2\\
    &\quad +\alpha_{l+1}(f(\mb x^{(l+1)},\bx) +\langle \mb g^{(l+1)},\mb x-\mb x^{(l+1)}\rangle + \frac{(1-\theta)\mu}{2}\|\mb x - \mb x^{(l+1)}\|^2)\\
    &  \geq A_{l+1}f(\mb x^{(l+1)},\bx) + \alpha_{l+1}\langle \mb g^{(l+1)},\mb x-\mb x_-^{(l)}\rangle  - A_l\tilde{\delta}_{l+1} - E_l \\
    &\quad + \frac{(1+\gamma)L_2+A_l(1-\theta)\mu}{2}\|\mb x-\mb x_-^{(l)}\|^2
\end{align*}
where the last equality is by noticing that
\begin{displaymath}
    A_l(\mb x_+^{(l)}-\mb x^{(l+1)}) + \alpha_{l+1}(\mb x-\mb x^{(l+1)}) = \alpha_{l+1}(\mb x-\mb x_-^{(l)}).
\end{displaymath}
Therefore, we have
\begin{align*}
    \psi_{l+1}(\mb x_-^{(l+1)})(w) \geq  & A_{l+1}f(\mb x^{(l+1)},\bx) (w)  - A_l\tilde{\delta}_{l+1}(w) - E_l(w) \\
    &+ A_{l+1}\cdot \min_{x\in \mc X}  \tau_{l}\langle \mb g^{(l+1)}(w),x-\mb x_-^{(l)}(w)\rangle+ \frac{(1+\gamma)L_2\tau_l^2}{2}\|x-\mb x_-^{(l)}(w)\|^2.
\end{align*}
Notice that defining $y = \tau_l x + (1-\tau_l)\mb x_+^{(l)}(w)$, we get $y-\mb x^{(l+1)}(w) = \tau_l(x-\mb x_-^{(l)}(w))$ and defining $\mc X' = \tau_l \mc X+ (1-\tau_l)\mb x_+^{(l)}(w) \subset \mc X$, we have
\begin{align*}
      &\quad \min_{x\in \mc X}  \tau_{l}\langle \mb g^{(l+1)}(w),x-\mb x_-^{(l)}(w)\rangle+ \frac{(1+\gamma)L_2\tau_l^2}{2}\|x-\mb x_-^{(l)}(w)\|^2 \\
    & = \min_{x\in \mc X'}  \langle \mb g^{(l+1)}(w),y-\mb x^{(l+1)}(w)\rangle+ \frac{(1+\gamma)L_2}{2}\|y-\mb x^{(l+1)}(w)\|^2\\
    &\geq \min_{x\in \mc X}   \langle \mb g^{(l+1)}(w),y-\mb x^{(l+1)}(w)\rangle+ \frac{(1+\gamma)L_2}{2}\|y-\mb x^{(l+1)}(w)\|^2.
\end{align*}
Thus, we have
\begin{align*}
    &\quad \psi_{l+1}(\mb x_-^{(l+1)})(w) +E_l(w)\\ &\geq   A_{l+1}\min_{y\in \mc X}  (\langle \mb g^{(l+1)}(w),y-\mb x^{(l+1)}(w)\rangle+ \frac{(1+\gamma)L_2}{2}\|y-\mb x^{(l+1)}(w)\|^2 +f(\mb x^{(l+1)},\bx)  )(w) - A_l\tilde{\delta}_{l+1}(w) \\
    & = A_{l+1} (\langle \mb g^{(l+1)}(w),\mb x_+^{(l+1)}(w)-\mb x^{(l+1)}(w)\rangle+ \frac{(1+\gamma)L_2}{2}\|\mb x_+^{(l+1)}(w)-\mb x^{(l+1)}(w)\|^2 +f(\mb x^{(l+1)},\bx)  )(w) - A_l\tilde{\delta}_{l+1} \\
    & \geq A_{l+1}f(\mb x_+^{(l+1)},\bx)(w) - A_l\tilde{\delta}_{l+1}(w) -A_{l+1}\hat{\delta}_{l+1}(w)
\end{align*}
where 
\begin{displaymath}
    \hat{\delta}_{l+1} = \langle \nabla f(\mb x^{(l+1)},\bx)- \mb g^{(l+1)},\mb x_+^{(l+1)}-\mb x^{(l+1)}\rangle-\frac{\gamma L_2}{2}\|\mb x_+^{(l+1)}-\mb x^{(l+1)}\|^2.
\end{displaymath}
Taking $\delta_{l+1} = \hat{\delta}_{l+1} + (1-\tau_l) \tilde{\delta}_{l+1}$
proves the result for $l+1$. 
\end{proof}

\begin{lemma}\label{lm:ineact-convergence}
For any $l\geq 0$,  
\begin{align*}
    f(\mb x_+^{(l)},\bx)
    &\leq f(\mb x^*,\bx) + (1+\gamma)L_2A_l^{-1}v(\mb x^*) +  A_l^{-1}E_l\\
    &\quad + A_l^{-1}\sum_{l'=0}^l \alpha_{l'}(\langle \mb g^{(l')}-\nabla f(\mb x^{(l')},\bx),\mb x^*-\mb x^{(l')}\rangle-\frac{\theta \mu}{2}\|\mb x^* - \mb x^{(l')}\|^2) .
\end{align*}
where $E_l = \sum_{l'=0}^{l} A_{l'}\delta_{l'}$ is as defined in Lemma \ref{lm:general_inexact}. 

\end{lemma}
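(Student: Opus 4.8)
The plan is to start from the recursive estimate of Lemma~\ref{lm:general_inexact} and evaluate the estimating functional $\psi_l$ at the optimizer $x^* := X^*_{1:T}$, which lies in $\mc Q = \overline{\mc X}$ by Assumption~\ref{assu:convex}. Lemma~\ref{lm:general_inexact} gives $A_l F(y^{(l)}) \le \psi_l(z^{(l)}) + E_l$ under the normalization $\nabla V(x^{(0)}) = \mb 0$, which we keep as a standing assumption (it is without loss of generality after replacing $V$ by $D_V(\cdot, x^{(0)})$, a shift by an affine function that leaves the iterates of \eqref{eq:update_acc_inexact} unchanged). Since $z^{(l)} = \argmin_{x\in\mc Q}\psi_l(x)$ by construction, we have $\psi_l(z^{(l)}) \le \psi_l(x^*)$, so the whole task reduces to bounding $\psi_l(x^*)$ from above.

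To do that, I would expand $\psi_l(x^*)$ from its definition and, in each linear term, write $G^{(l')} = \nabla F(x^{(l')}) + (G^{(l')} - \nabla F(x^{(l')}))$. For the exact piece, the left inequality in \eqref{eq:condition-convex-smooth} (which holds by \eqref{eq:condition-convex-smooth-f}, as recorded in Lemma~\ref{lm:general_inexact}) gives
\begin{displaymath}
F(x^{(l')}) + \langle \nabla F(x^{(l')}), x^* - x^{(l')}\rangle \le F^* - \tfrac{\mu}{2}\|x^* - x^{(l')}\|^2 .
\end{displaymath}
Adding the $\tfrac{(1-\theta)\mu}{2}\|x^* - x^{(l')}\|^2$ term already present in $\psi_l$ turns the coefficient $-\tfrac{\mu}{2} + \tfrac{(1-\theta)\mu}{2}$ of $\|x^* - x^{(l')}\|^2$ into $-\tfrac{\theta\mu}{2}$, so that, using $\sum_{l'=0}^l\alpha_{l'} = A_l$ and $F^* = F(x^*)$,
\begin{align*}
\psi_l(x^*) &\le (1+\gamma)L_2 V(x^*) + A_l F^* \\
&\quad + \sum_{l'=0}^l \alpha_{l'}\Bigl(\langle G^{(l')} - \nabla F(x^{(l')}), x^* - x^{(l')}\rangle - \tfrac{\theta\mu}{2}\|x^* - x^{(l')}\|^2\Bigr).
\end{align*}

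Combining this with $A_l F(y^{(l)}) \le \psi_l(z^{(l)}) + E_l \le \psi_l(x^*) + E_l$ and dividing through by $A_l$ produces exactly the stated inequality. I do not expect any genuine difficulty here: this is a routine ``estimate sequence'' computation built on top of Lemma~\ref{lm:general_inexact}, and the only steps that require a bit of care are the harmless normalization $\nabla V(x^{(0)}) = \mb 0$ and the arithmetic of the quadratic $\mu$-terms, i.e.\ checking that the $-\tfrac{\mu}{2}$ coming from strong convexity and the $+\tfrac{(1-\theta)\mu}{2}$ already baked into $\psi_l$ combine into $-\tfrac{\theta\mu}{2}$ rather than some other coefficient.
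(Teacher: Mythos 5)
Your proposal is correct and follows the same route as the paper: bound $\psi_l(z^{(l)})\le\psi_l(x^*)$ by minimality, expand $\psi_l(x^*)$, absorb the exact-gradient part via the $\mu$-strong-convexity inequality so that the quadratic coefficients combine into $-\tfrac{\theta\mu}{2}$, and then invoke Lemma \ref{lm:general_inexact} and divide by $A_l$. Your explicit remark that the normalization $\nabla V(x^{(0)})=\mb 0$ is a harmless affine shift matches what the paper does (via $\tilde V_t$) when it later applies this lemma.
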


\begin{proof}[Proof of Lemma \ref{lm:ineact-convergence}]
Notice that from the definition of $\psi_l$ and $\mb x_-^{(l)}$, we have
\begin{align*}
    &\quad \psi_l(\mb x_-^{(l)})-(1+\gamma)L_2v(\mb x^*)  \leq \psi_l(\mb x^*)-(1+\gamma)L_2v(\mb x^*)\\
    & =  \sum_{l'=0}^l \alpha_{l'}(f(\mb x^{(l')},\bx) +\langle \mb g^{(l')},\mb x^*-\mb x^{(l')}\rangle + \frac{(1-\theta)\mu}{2}\|\mb x^* - \mb x^{(l')}\|^2)\\
    &\leq A_lf(\mb x^*,\bx) + \sum_{l'=0}^l \alpha_{l'}(\langle \mb g^{(l')}-\nabla f(\mb x^{(l')}),\mb x^*-\mb x^{(l')}\rangle-\frac{\theta \mu}{2}\|\mb x^* - \mb x^{(l')}\|^2).
\end{align*}
In addition, from Lemma \ref{lm:general_inexact}, we have
\begin{align*}
    f(\mb x_+^{(l)},\bx)&\leq A_l^{-1}(\psi_l(\mb x_-^{(l)}) + E_l)\\
    & \leq f(\mb x^*,\bx) + (1+\gamma)L_2A_l^{-1}v(\mb x^*) +  A_l^{-1}E_l
     \\
     &\quad + A_l^{-1}\sum_{l'=0}^l \alpha_{l'}(\langle \mb g^{(l')}-\nabla f(\mb x^{(l')},\bx),\mb x^*-\mb x^{(l')}\rangle-\frac{\theta \mu}{2}\|\mb x^* - \mb x^{(l')}\|^2) .
\end{align*}
\end{proof}

\begin{lemma}\label{lm:para-seq}
If $\mu>0$, the sequence $A_l$ satisfies that 
    \begin{displaymath}
        (1+\frac{1}{2}\sqrt{\frac{(1-\theta)\mu}{(1+\gamma)L_2}})^2A_l\leq A_{l+1},\quad l=0,1,\ldots.
    \end{displaymath}
If $\mu =0$, the sequence $\alpha_l$ satisfies that 
\begin{displaymath}
    \frac{1}{2}(l+1)\leq \alpha_l \leq l+1,\quad l=0,1,\ldots.
\end{displaymath}
\end{lemma}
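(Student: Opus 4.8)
The plan is to handle both cases through the single defining recursion, rewritten in terms of the running sum. Write $c := (1+\gamma)L_2 > 0$ and $q := (1-\theta)\mu \ge 0$, and use $A_{l+1} = A_l + \alpha_{l+1}$ to turn the relation $c + q A_l = \tfrac{c\,\alpha_{l+1}^2}{A_{l+1}}$ into the quadratic $c\,\alpha_{l+1}^2 - (c + qA_l)\alpha_{l+1} - (c + qA_l)A_l = 0$ in $\alpha_{l+1}$. Its constant term $-(c+qA_l)A_l$ is negative once we know $A_l > 0$ (which holds by induction starting from $A_0 = \alpha_0 = 1$), so there is exactly one positive root; this is $\alpha_{l+1}$, and in particular $\alpha_{l+1} > 0$ and $A_{l+1} > A_l \ge 1$, facts I will use freely.

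For $\mu = 0$ (so $q = 0$) the recursion collapses to $\alpha_{l+1}^2 - \alpha_{l+1} - A_l = 0$, i.e. $\alpha_{l+1} = \tfrac12\big(1 + \sqrt{1 + 4A_l}\big)$. I will prove $\tfrac12(l+1) \le \alpha_l \le l+1$ by induction on $l$; the base case $\alpha_0 = 1$ is immediate. Assuming the bound up to index $l$ and summing gives $\tfrac14(l+1)(l+2) \le A_l \le \tfrac12(l+1)(l+2)$, and substituting these into $\alpha_{l+1} = \tfrac12(1+\sqrt{1+4A_l})$ reduces the desired bounds $\tfrac12(l+2) \le \alpha_{l+1} \le l+2$ to the elementary inequalities $1 + (l+1)(l+2) \ge (l+1)^2$ and $1 + 2(l+1)(l+2) \le (2l+3)^2$, both true for all $l \ge 0$. (As a byproduct this also records the two-sided estimate on $A_l$ used later in the proof of Theorem~\ref{thm:acc-mu}.)

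For $\mu > 0$ put $\beta := \sqrt{q/c} = \sqrt{(1-\theta)\mu/((1+\gamma)L_2)} > 0$ and $\kappa := A_{l+1}/A_l > 1$; the goal is $\kappa \ge (1+\tfrac{\beta}{2})^2$. Substituting $\alpha_{l+1} = A_{l+1} - A_l = (\kappa-1)A_l$ into the recursion and dividing by $A_l$ gives $A_l\big[(\kappa-1)^2 - \kappa\beta^2\big] = \kappa > 0$, hence $(\kappa-1)^2 - \kappa\beta^2 > 0$. Since $\kappa > 1$, and since the upward parabola $X \mapsto X^2 - (2+\beta^2)X + 1$ has roots $\kappa_\pm$ with $\kappa_-\kappa_+ = 1$ (so $\kappa_- < 1 < \kappa_+$), positivity at $X = \kappa$ forces $\kappa > \kappa_+$, where $\kappa_+ = 1 + \tfrac{\beta^2}{2} + \tfrac{\beta}{2}\sqrt{4+\beta^2}$. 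It then remains only to check $\kappa_+ \ge (1+\tfrac{\beta}{2})^2$, which after dividing by $\beta$ and rearranging amounts to $\tfrac12\sqrt{4+\beta^2} \ge 1 - \tfrac{\beta}{4}$ — trivial when $\beta \ge 4$ and obtained by squaring otherwise.

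The genuinely delicate point is the $\mu > 0$ argument: one must use $\kappa > 1$ to select the correct branch ($\kappa > \kappa_+$ rather than $\kappa < \kappa_- = 1/\kappa_+$) and then be content with the clean lower bound $(1+\beta/2)^2$ in place of the exact (messier) threshold $\kappa_+$. Everything else — the two quadratic verifications in the $\mu = 0$ case, and the existence and positivity of $\alpha_{l+1}$ — is routine bookkeeping, and the two cases never interact.
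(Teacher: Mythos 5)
Your proof is correct. For the $\mu=0$ case you follow essentially the same route as the paper: the recursion reduces to $\alpha_{l+1}^2=\alpha_{l+1}+A_l$, you take the positive root $\alpha_{l+1}=\tfrac12(1+\sqrt{1+4A_l})$ (the paper writes it as $\tfrac12+\sqrt{A_l+\tfrac14}$, the same quantity), and run the same induction through the two-sided bound $\tfrac14(l+1)(l+2)\le A_l\le\tfrac12(l+1)(l+2)$; your two closing quadratic inequalities check out. The real difference is the $\mu>0$ case: the paper does not prove it at all but simply cites Lemma~4 of \cite{DEVOLDER2013_strong}, whereas you give a self-contained argument. Your computation is sound: substituting $\alpha_{l+1}=(\kappa-1)A_l$ into the recursion and dividing by $cA_l$ does yield $A_l[(\kappa-1)^2-\kappa\beta^2]=\kappa>0$, the quadratic $X^2-(2+\beta^2)X+1$ has reciprocal roots straddling $1$ so $\kappa>1$ forces $\kappa>\kappa_+$, and the final comparison $\kappa_+\ge(1+\beta/2)^2$ reduces, as you say, to $\tfrac12\sqrt{4+\beta^2}\ge 1-\tfrac{\beta}{4}$, which is immediate. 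You are also right to flag the branch selection ($\kappa>\kappa_+$ rather than $\kappa<\kappa_-$) as the one point requiring care. What your version buys is a proof that does not lean on an external reference; what the paper's citation buys is brevity. One small bookkeeping point: your argument implicitly uses that $\alpha_{l+1}$ is defined as the unique positive root of the quadratic (so that $\alpha_{l+1}>0$ and hence $\kappa>1$); this is the intended reading of the paper's implicit recursion, and you state it, so no gap.
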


\begin{proof}[Proof of Lemma \ref{lm:para-seq}]
The result for the case when $\mu>0$ is from Lemma 4 in \cite{DEVOLDER2013_strong}. For $\mu = 0$, notice that $ A_l + \alpha_{l+1}= \alpha_{l+1}^2$.
The claim is true for $l = 0$ since $\alpha_0 =1$. Suppose the statement is true for $l$, thus 
\begin{displaymath}
    \frac{(l+1)(l+2)}{4}= \frac{1}{2}\sum_{i=0}^l (i+1)\leq A_l \leq \sum_{i=0}^l (i+1) = \frac{(l+1)(l+2)}{2}.
\end{displaymath}
Thus, for $l+1$, notice that $ \alpha_{l+1} = \frac{1}{2} + \sqrt{A_l + \frac{1}{4}}$, we have
\begin{displaymath}
    \alpha_{l+1} \geq \frac{1}{2} +\sqrt{A_l} \geq \frac{1}{2} + \frac{l+1}{2} = \frac{l+2}{2}.
\end{displaymath}
The upper bound follows from the following
\begin{displaymath}
    A_l+\frac{1}{4} \leq  \frac{(l+1)(l+2)}{2} + \frac{1}{4} = \frac{2l^2+6l+5}{4} \leq \frac{4l^2+12l +9}{4} =(l+\frac{3}{2})^2.
\end{displaymath}
\end{proof}

\begin{proof}[Proof of Lemma \ref{lm:acc-inexact-multi-stage}]

First, it's easy to check that Lemma \ref{lm:ineact-convergence} still holds when $v_t$ is a random function, defined as $\widehat{v}_t(x_t,w) = v_t(x_t) - \langle \nabla v_t(\mb x_t^{(0)}(w)),x_t-\mb x_t^{(0)}(w)\rangle$. In addition, by first order optimality condition, at $\mb x^{(0)}$, $\nabla \widehat{v}_t(\mb x_t^{(0)})(w) = 0$ for all $w\in \Omega$. In addition, $D_{v_t}(\cdot,\cdot) = D_{\widehat{v}_t}(\cdot,\cdot)$ are the same, since adding a linear function does not change the induced Bregman divergence. Followsing a similar argument as in the proof of Lemma \ref{lm:MD1}, we can show that $X_t^{(l)}, X_{t\pm}^{(l)}\in \mc P_t(\mc X_t^o)$ for all $t,l$.

With $\gamma=1$,
\begin{align*}
    \E[\delta_0] &= \E[\langle \mb g^{(0)} - \nabla f(\mb x^{(0)},\bx),\mb x^{(0)}-\mb x_+^{(0)}\rangle- \frac{\gamma L_2}{2}\|\mb x_+^{(0)}- \mb x^{(0)}\|^2]\\
    &=\E[\langle \mb g^{(0)} - \E[\nabla f(\mb x^{(0)},\bx)|\mc F_t],\mb x^{(0)}-\mb x_+^{(0)}\rangle - \frac{\gamma L_2}{2}\|\mb x_+^{(0)}- \mb x^{(0)}\|^2]\\
    &= \E[\langle \Delta^{(0)},\mb x^{(0)}-\mb x_+^{(0)}\rangle - \frac{\gamma L_2}{2}\|\mb x_+^{(0)}- \mb x^{(0)}\|^2]\leq \frac{\E[\|\Delta^{(0)}\|^2]}{2\gamma L_2} = \frac{\E[\|\Delta^{(0)}\|^2]}{2 L_2},
\end{align*}
where the second $=$ is because $\mb g_t,\mb x_t^{(0)}, \mb x_+^{(0)}$ are all measurable w.r.t. $\mc F_t$. Similarly $\E[\hat{\delta}_{l+1}] \leq \frac{\E[\|\Delta^{(l+1)}\|^2]}{2 L_2}$. Thus, for any $\mu\geq 0$, 
\begin{align*}
    \E[f(\mb x_+^{(l)},\bx)-f(\mb x^*,\bx)]&\leq  \E[2L_2A_l^{-1}v(\mb x^*) +  A_l^{-1}\sum_{l'=0}^l A_{l'}\frac{\|\Delta^{(l')}\|^2}{2 L_2}+A_l^{-1} \langle\Delta^{(0)},\mb x^*-\mb x^{(0)}\rangle\nonumber\\
    & \quad + A_l^{-1}\sum_{l'=1}^l \langle \Delta^{(l')},\alpha_{l'}(\mb x^*-\mb x^{(l')}) + A_{l'-1}(\mb x_+^{(l'-1)}-\mb x^{(l')})\rangle ]
\end{align*}
Thus, the result follows from taking for $l'\geq 0$, $A_{-1} = 0$, 
\begin{displaymath}
    \overline{\Delta}^{(l')} =A_{l'}\frac{\|\Delta^{(l')}\|^2}{2 L_2} +  \langle \Delta^{(l')},\alpha_{l'}(\mb x^*-\mb x^{(l')}) + A_{l'-1}(\mb x_+^{(l'-1)}-\mb x^{(l')})\rangle.
\end{displaymath}

Further assuming that $\mu>0$ and taking $\theta = 1/2$, we get
\begin{displaymath}
    \E[\langle \mb g^{(l')}-\nabla f(\mb x^{(l')},\bx),\mb x^*-\mb x^{(l')}\rangle-\frac{\theta \mu}{2}\|\mb x^* - \mb x^{(l')}\|^2] \leq \frac{\E[\|\Delta^{(l')}\|^2]}{2\theta\mu}=\frac{\E[\|\Delta^{(l')}\|^2]}{\mu}.
\end{displaymath}
\begin{displaymath}
    \E[\tilde{\delta}_{l+1}] = \E[\langle \mb g^{(l+1)} - \nabla f(\mb x^{(l+1)},\bx) ,\mb x_+^{(l)}-\mb x^{(l+1)}\rangle-\frac{\mu}{2}\|\mb x_+^{(l)} - \mb x^{(l+1)}\|^2]\leq \frac{\E[\|\Delta^{(l+1)}\|^2]}{2\mu}.
\end{displaymath}
Thus, for all $l\geq 0$, $\E[\delta_l ]\leq \E[\frac{\|\Delta^{(l)}\|^2}{2\mu} + \frac{\|\Delta^{(l)}\|^2}{2L_2}]$. 
Thus, 
\begin{align*}
    \E[f(\mb x_+^{(l)},\bx) - f(\mb x^*,\bx)]\leq2L_2A_l^{-1}\E[v(\mb x^*)]+ A_l^{-1}\sum_{l'=0}^l (\frac{\alpha_{l'}}{\mu} + \frac{A_{l'}}{2\mu}+\frac{A_{l'}}{2L_2})\E[\|\Delta^{(l')}\|^2 ].
\end{align*}
The result follows from taking $\overline{\Delta}^{(l')} =(\frac{\alpha_{l'}}{\mu} + \frac{A_{l'}}{2\mu}+\frac{A_{l'}}{2L_2})\|\Delta^{(l')}\|^2 $. 
\end{proof}

\end{document}